\newcommand{\si}{\sigma}
\newcommand{\Si}{\Sigma}
\newcommand{\Ga}{\Gamma}
\newcommand{\bK}{\mathbb{K}}
\newcommand{\bC}{\mathbb{C}}
\newcommand{\bD}{\mathbb{D}}
\newcommand{\bH}{\mathbb{H}}
\newcommand{\bL}{\mathbb{L}}
\newcommand{\bP}{\mathbb{P}}
\newcommand{\bQ}{\mathbb{Q}}
\newcommand{\bR}{\mathbb{R}}
\newcommand{\bS}{\mathbb{S}}
\newcommand{\bT}{\mathbb{T}}
\newcommand{\tbT}{{\widetilde{\mathbb{T}}}}
\newcommand{\bZ}{\mathbb{Z}}
\newcommand{\balpha}{{\boldsymbol \alpha}}
\newcommand{\bbeta}{{\boldsymbol \beta}}
\newcommand{\bSi}{{\boldsymbol \Si}}
\newcommand{\cA}{\mathcal{A}}
\newcommand{\cB}{\mathcal{B}}
\newcommand{\cC}{\mathcal{C}}
\newcommand{\cD}{\mathcal{D}}
\newcommand{\cE}{\mathcal{E}}
\newcommand{\cG}{\mathcal{G}}
\newcommand{\cI}{\mathcal{I}}
\newcommand{\cL}{\mathcal{L}}
\newcommand{\cM}{\mathcal{M}}
\newcommand{\cO}{\mathcal{O}}
\newcommand{\cS}{\mathcal{S}}
\newcommand{\cT}{\mathcal{T}}
\newcommand{\cX}{\mathcal{X}}
\newcommand{\cY}{\mathcal{Y}}
\newcommand{\IX}{\mathcal{IX}}
\newcommand{\fB}{\mathfrak{B}}
\newcommand{\fr}{\mathfrak{r}}
\newcommand{\fs}{\mathfrak{s}}
\newcommand{\fm}{\mathfrak{m}}
\newcommand{\fp}{\mathfrak{p}}
\newcommand{\fl}{\mathfrak{l}}
\newcommand{\fn}{\mathfrak{n}}
\newcommand{\fg}{\mathfrak{g}}
\newcommand{\fC}{\mathfrak{C}}
\newcommand{\const}{\mathrm{const}}
\newcommand{\Spec}{\mathrm{Spec}}
\newcommand{\Hom}{\mathrm{Hom}}
\newcommand{\age}{{\mathrm{age}}}
\newcommand{\Hess}{{\mathrm{Hess}}}
\newcommand{\orb}{\mathrm{orb}}
\newcommand{{\inv} }{\mathrm{inv}}
\newcommand{\ev}{\mathrm{ev}}
\newcommand{\Aut}{\mathrm{Aut}}
\newcommand{\Res}{\mathrm{Res}}
\newcommand{\val}{ {\mathrm{val}} }
\newcommand{\vir}{{\mathrm{vir}}}
\newcommand{\CR}{  {\mathrm{CR}}  }
\newcommand{\Area}{\mathrm{Area}}
\newcommand{\Jac}{\mathrm{Jac}}
\newcommand{\End}{\mathrm{End}}
\newcommand{\one}{\mathbf{1}}
\newcommand{\be}{\mathbf{e}}
\newcommand{\bu}{\mathbf{u}}
\newcommand{\bt}{\mathbf{t}}
\newcommand{\bGa}{\mathbf{\Ga}}
\newcommand{\btau}{\boldsymbol\tau }
\newcommand{\bmu}{\boldsymbol\mu}
\newcommand{\Int}{\mathrm{Int}}
\newcommand{\brho}{{\boldsymbol{\rho}}}
\newcommand{\w}{\mathsf{w}}
\newcommand{\su}{\mathsf{u}}
\newcommand{\sv}{\mathsf{v}}
\newcommand{\sw}{\mathsf{w}}
\newcommand{\sC}{\mathsf{C}}
\newcommand{\tC}{ {\widetilde{C}}}
\newcommand{\tF}{ {\widetilde{F}}}
\newcommand{\tQ}{ {\widetilde{Q}} }
\newcommand{\tS}{ {\widetilde{S}} }
\newcommand{\tL}{{\widetilde{L}}}
\newcommand{\tcL}{{\widetilde{\cL}}}
\newcommand{\tw}{{\widetilde{w}}}
\newcommand{\tX}{{\widetilde{X}}}
\newcommand{\txi}{ {\widetilde{\xi}} }
\newcommand{\tgamma}{{\widetilde{\gamma}}}
\newcommand{\tmu}{\widetilde{\mu}}
\newcommand{\hX}{\hat{X}}
\newcommand{\hY}{\hat{Y}}
\newcommand{\hx}{\hat{x}}
\newcommand{\hy}{\hat{y}}
\newcommand{\vGa}{\vec{\Ga}}
\newcommand{\vmu}{\vec{\mu}}
\newcommand{\vs}{{\vec{s}}}
\newcommand{\oC}{{\overline C}}
\newcommand{\Vbar}{{\overline V}}
\newcommand{\BSi}{ \mathrm{Box}(\Si)}
\newcommand{\eff}{\mathrm{eff}}
\newcommand{\Ker}{\mathrm{Ker}}
\newcommand{\Hol}{{\mathcal{H}ol}}
\newcommand{\bsi}{{\boldsymbol{\si}}}
\newcommand{\bgamma}{{\boldsymbol{\gamma}}}
\newcommand{\spa}{ {\ \ \,} }
\newcommand{\Mbar}{\overline{\cM} }
\newcommand{\ST}{ {S_{\bT}} }
\newcommand{\RT}{ {R_{\bT}} }
\newcommand{\bST}{ {\bar{S}_{\bT}} }
\newcommand{\bSTQ}{ \bST[\![ \tQ,\tau'' ]\!] }
\newcommand{\nov}{\Lambda_{\mathrm{nov}}}
\newcommand{\novT}{\bar{\Lambda}^{\bT}_{\mathrm{nov}} }
\newtheorem{dummy}{dummy}[section]
\newtheorem{theorem}[dummy]{Theorem}
\newtheorem{corollary}[dummy]{Corollary}
\newtheorem{proposition}[dummy]{Proposition}
\newtheorem{definition}[dummy]{Definition}
\newtheorem{remark}[dummy]{Remark}
\newtheorem{example}[dummy]{Example}
\begin{document}

\title{Graph sums in the Remodeling Conjecture}

\author{Bohan Fang}
\address{Bohan Fang, Beijing International Center for Mathematical Research, Peking University, 5 Yiheyuan Road, Beijing 100871, China}
\email{bohanfang@gmail.com}


\author{Zhengyu Zong}
\address{Zhengyu Zong, Yau Mathematical Sciences Center, Tsinghua University, Jin Chun Yuan West Building,
Tsinghua University, Haidian District, Beijing 100084, China}
\email{zyzong@math.tsinghua.edu.cn}

\begin{abstract}
The BKMP Remodeling Conjecture \cite{Ma,BKMP09,BKMP10} preditcs all genus open-closed Gromov-Witten invariants for a toric Calabi-Yau $3$-orbifold by Eynard-Orantin's topological recursion \cite{EO07} on its mirror curve.
The proof of the Remodeling Conjecture by the authors \cite{FLZ1,FLZ3} relies on comparing two Feynman-type graph sums in both A and B-models. In this paper, we will survey these graph sum formulae and discuss their roles in the proof of the conjecture.
\end{abstract}

\maketitle

 \tableofcontents
\section{Introduction}

\subsection{Outline of the proof}
The Remodeling Conjecture can be viewed as an all genus open-closed mirror symmetry for toric Calabi-Yau 3-orbifolds. On A-model side, one has the higher genus open-closed Gromov-Witten potential for a toric Calabi-Yau 3-orbifold. On B-model side, the higher genus B-model potential comes from the Eynard-Orantin topological recursion on the mirror curve. The Remodeling Conjecture identifies the A-model and B-model higher genus potentials under the mirror map. At first glance, the Remodeling Conjecture may seem to be mysterious. However, we will see that the proof of the Remodeling Conjecture is quite natural under the point of view of quantization of semisimple Frobenius structures. In this subsection, we give the outline of the proof of the  Remodeling Conjecture.

Recall that the genus zero mirror theorem \cite{CCIT} works for general semi-projective toric orbifolds. On A-model side, one considers an $n-$dimensional semi-projective toric orbifold $\cX$ over $\bC$. There is an $n$-dimensional torus $\bT\cong(\bC^*)^n$ in $\cX$ as a dense open subset. The natural $\bT$-action on itself extends to a $\bT$-action on $\cX$. Under this action, there are finitely many fixed points and finitely many 1-dimensional $\bT-$invariant orbits. In practice, one may choose a smaller torus $\bT'\subset \bT$ acting on $\cX$ such that fixed points and the 1-dimensional $\bT'-$invariant orbits are the same as those of the $\bT$-action.

One can consider the $\bT'-$equivariant orbifold quantum cohomology $QH^*_{\CR,\bT'}(\cX)$ of $\cX$. The quantum product $\star_{\btau}$ on $QH^*_{\CR,\bT'}(\cX)$ is defined by the genus 0 Gromov-Witten invariants of $\cX$. There is also a $\bT'-$ equivariant Poincare pairing $(\cdot,\cdot)_{\cX,\bT'}$ on $\cX$. Then one obtains a Frobenius algebra
$$
(QH^*_{\CR,\bT'}(\cX),\star_{\btau},(\cdot,\cdot)_{\cX,\bT'}).
$$

On B-model side, one has the Landau-Ginzburg mirror of $\cX$. More concretely, there is a $\bT'$-equivariant superpotential
$$
W^{\bT'}: (\bC^*)^n\to \bC,
$$
which is determined by the toric data of $\cX$ and the $\bT'$-action on $\cX$. The $\bT'$-equivariant super potential $W^{\bT'}$ defines the Jacobi ring $\Jac(W^{\bT'})$:
$$
\Jac(W^{\bT'})=\bC[X_1^\pm,\cdots,X_n^\pm]/\langle X_1\frac{\partial W^{\bT'}}{\partial X_1},\cdots,X_n\frac{\partial W^{\bT'}}{\partial X_n}\rangle.
$$
There is a natural product $\cdot$ on $\Jac(W^{\bT'})$ induced from the product on $\bC[X_1^\pm,\cdots,X_n^\pm]$. One can also consider the residue pairing on the Jacobi ring $\Jac(W^{\bT'})$:
$$
(f,g):=\frac{1}{(2\pi\sqrt{-1})^n}\int_{|dW^{\bT'}|=\epsilon}\frac{fg\frac{dX_1\cdots dX_n}{X_1\cdots X_n}}{X_1\frac{\partial W^{\bT'}}{\partial X_1}\cdots X_n\frac{\partial W^{\bT'}}{\partial X_n}},
$$
where $f,g\in \Jac(W^{\bT'})$. These data form a Frobenius algebra on B-model:
$$
(\Jac(W^{\bT'}),\cdot,(\cdot,\cdot)).
$$
The genus 0 mirror theorem can be viewed as an identification of Frobenius structures on A-model and B-model: under the mirror map, we have an isomorphism of Frobenius algebras\footnote{We require $\btau\in H^{\leq 2}(\cX)$, i.e. we only consider \emph{small} quantum cohomology. The mirror symmetry is then established after the mirror map tranformation.}
$$
(QH^*_{\CR,\bT'}(\cX),\star_{\btau},(\cdot,\cdot)_{\cX,\bT'})\cong (\Jac(W^{\bT'}),\cdot,(\cdot,\cdot)).
$$
Both A-model and B-model Frobenius algebras are semisimple.

When we restrict the above mirror theorem to the case of toric Calabi-Yau 3-orbifolds, we obtain an identification of the Frobenius algebra on A-model and the Frobenius algebra on its Landau-Ginzburg mirror. On the other hand, the Remodeling Conjecture concerns the higher genus (open-closed) Gromov-Witten invariants instead of the quantum cohomology which is about the genus 0 data. So a natural question is: what is the higher genus B-model? In \cite{EO07}, Eynard and Orantin introduce the topological recursion on spectral curves. The output of the topological recursion is a symmetric $n$-form $\omega_{g,n}$ on the curve, where $g,n\geq 0$. In \cite{BKMP09} and \cite{BKMP10}, Bouchard, Klemm, Mari\~{n}o, and Pasquetti conjecture that when we apply the topological recursion to the mirror curve, we obtain the higher genus B-model potential $\check{F}_{g,n}$ is defined from $\omega_{g,n}$. The Remodeling Conjecture identifies the higher genus open-closed Gromov-Witten potential $F_{g,n}^{\cX,(\cL,f)}$ ($\cL$ is an Aganagic-Vafa brane and $f\in \bZ$ is the framing) of a toric Calabi-Yau 3-orbifold $\cX$ and the higher genus B-model potential $\check{F}_{g,n}$ under the open-closed mirror map. Thus the Remodeling Conjecture is an all genus open-closed mirror symmetry for toric Calabi-Yau 3-orbifolds.

The mirror curve as the B-model in the Remodeling Conjecture is related to the Landau-Ginzburg model by a dimensional reduction \cite{HV00,HIV00}. More concretely, for any toric \emph{Calabi-Yau} $3$-fold $\cX$, the mirror curve $C$ is defined by the following equation:
$$
C=\{(X,Y)|H(X,Y)=0\}\subset (\bC^*)^2,
$$
while the $\bT'$-equivariant super potential $W^{\bT'}:(\bC^*)^3\to\bC$ is given by
$$
W^{\bT'}=H(X,Y)Z-\su_1\log X-\su_2\log Y.
$$
Here $(\bC^*)^2\cong\bT'\subset\bT\cong(\bC^*)^3$ is the Calabi-Yau sub-torus which acts trivially on the canonical bundle of $\cX$ and $\su_1,\su_2$ are the equivariant parameters i.e. $H^*(\cB\bT')=\bC[\su_1,\su_2]$.

The key idea in the proof of the Remodeling Conjecture is that one can realize both A-model and B-model higher genus potentials as quantizations on the same semisimple Frobenius structure. On A-model side, this is given by a generalization of Givental formula to the orbifold case \cite{Zo}. In \cite{Zo}, the Givental formula is generalized to any GKM orbifolds (there are finitely many torus fixed points and finitely many 1-dimensional invariant orbits) and one can apply this formula to the case of toric Calabi-Yau 3-orbifolds which are a special kind of GKM orbifolds. Givental formula is expressed in terms of quantization of quadratic Hamiltonians which involves the exponential of quadratic differential operators.

On B-model side, the quantization procedure is  given by the topological recursion on the mirror curve. The higher genus data $\omega_{g,n}$ is obtained recursively from the initial data $\omega_{0,1},\omega_{0,2}$. By the dimensional reduction, the data $\omega_{0,1},\omega_{0,2}$ is equivalent to the data of the Frobenius structure $(\Jac(W^{\bT'}),\cdot,(\cdot,\cdot))$ of the Landau-Ginzburg B-model.

The bridge connecting the A-model quantization (orbifold Givental formula) and the B-model quantization (topological recursion) is the graph sum formula. On A-model side, one can apply Wick's formula to the Givental formula to rewrite this formula (involving differential operators) in terms of Feynman graphs. On B-model side, by \cite{E11} \cite{DOSS}, the topological recursion is equivalent to a graph sum formula. The Remodeling Conjecture is proved by identifying each factor in the graph sum formulas on A-model and B-model.

It turns out that all the factors in the graph sum formulas are determined by the $R$-matrix, which appears in the fundamental solutions of the quantum differential equation, and the disk potential $F_{0,1}^{\cX,(\cL,f)}$ ($\check{F}_{0,1}$ on B-model). The genus 0 mirror theorem identifies the Frobenius structures on A-model and B-model and hence identifies the quantum differential equations. By the uniqueness of the fundamental solution, the A-model and B-model R-matrices can be identified up to a constant matrix. On A-model side, this constant matrix is fixed by the orbifold quantum Riemann-Roch theorem \cite{Ts10} while on B-model side, this constant matrix is obtained by direct computation. It turns out that the A-model and B-model R-matrices are indeed equal. In the end, the identification of the disk potentials $F_{0,1}^{\cX,(\cL,f)}$ and $\check{F}_{0,1}$ is given in \cite{FLT}. Therefore, the Remodeling Conjecture follows immediately.

\subsection{Overview of the paper}
We fix the notation on toric orbifolds and Gromov-Witten invariants in Section \ref{sec:toric}.
In Section \ref{sec:b-model} we introduce $3$ related and equivalent B-models to a toric Calab-Yau $3$-orbifold $\cX$, with an emphasis on mirror curves. In Section \ref{sec:genus-0-mirror} we will give a quick review on the genus $0$ mirror theorem, and on the identification of the Frobenius structures. In Section \ref{sec:A-model-quantization}, we study the A-model quantization. The orbifold Givental formula expresses the higher genus Gromov-Witten potentials of a toric Calabi-Yau $3$-orbifold in terms of its Frobenius structure, which is the genus zero data. We also discuss the graph sum version of the orbifold Givental formula. In Section \ref{sec:B-model-quantization}, we move on to the B-model quantization, which is the topological recursion on a spectral curve. We also discuss the graph sum formula of the topological recursion. Then we specialize to the case of the mirror curve of a toric Calabi-Yau $3$-orbifold. In Section \ref{sec:comparing} we discuss the dimensional reduction from the Landau-Ginzburg B-model. Then we expresses the factors in the graph sum formula of B-model in terms of the B-model Frobenius structure. Then comparing the graph sum components leads to a complete proof of the Remodeling Conjecture. We will illustrate many facts by a running example.

\subsection{Acknowledgments}

We would like to thank Chiu-Chu Melissa Liu for the wonderful collaboration towards a proof of the BKMP Remodeling Conjecture, and her constant encouragement during the writing of this survey. We would also like to thank Bertrand Eynard for pointing out the relation between the graph sums in \cite{E11} and \cite{DOSS}.

\subsection{Table of notations}
\begin{center}
\begin{tabular}{| c | p{6cm} | p{5cm}|}
\hline
Symbols    & Explanation & Remark\\
\hline
$\cX$ & a toric CY $3$-(orbi)fold, defined by a defining polytope $P$ and its triangulation & fixed throughout the paper; fan $\Si$ is a cone over $P$\\
\hline
$\bT$  & torus acting on the toric CY 3-fold $\cX$ & $\bT\cong (\bC^*)^3$ \\
\hline
$\bT'$ & Calabi-Yau torus $\subset \bT$ preserving the CY form & $\bT'\cong (\bC^*)^2$\\
\hline
$\cL$ & a fixed outer Aganagic-Vafa brane & location (phase) given by $\si_0\in \Si(3), \tau_0\in \Si(2)$\\
\hline
$I_\Si$ & set of canonical basis for $QH_{\bT}^*(\cX), H^*_{\CR,\bT}(\cX)$, etc. & $I_\Si=\{\bsi=(\si,\gamma)\},\si\in \Si(3),\gamma\in G^*_\si.$\\
\hline
$I_\cC$ & set of critical points on a spectral curve $\cC$ & $I_\cC=I_\Si$ by the mirror thm if $\cC=$ mirror curve\\
\hline
$P_\balpha$ & critical points of $W^{\bT'}$, in $(\bC^*)^3$ & \\
\hline
$p_\balpha$ & critical points of the mirror curve $C_q\subset (\bC^*)^2$ & $P_\balpha=(p_\balpha, Z_\balpha)$\\
\hline
$\Jac(W^{\bT'})$ & Jacobian ring of $W^{\bT'}$ & $\cong QH_{\bT'}^*(\cX)$ under the mirror map, as a Frobenius alg.\\
\hline
$V_\balpha$ & canonical basis of $\Jac(W^{\bT'})$&  $V_\balpha(P_\bbeta)=\delta_{\balpha\bbeta}$ \\
\hline
$\fp$ & $h^2_\CR(\cX)$, number of twisted K\"ahler parameters & also \#(number of integral points in $\Delta$) $-3$\\
\hline
$\fg$ & $h^4_\CR(\cX)$, also the genus of the compactified mirror curve $\oC_q$ & also number of integral points in $\mathrm{Int}(\Delta)$
\\
\hline
$q_a,\ a=1\dots \fp$ & complex parameters mirror to extended K\"ahler parameters & depending on a choice of extended K\"ahler basis $H_1,\dots, H_\fp$\\
\hline
$C_q$ & Affine mirror curve at parameter $q$ & \\
\hline
$\oC_q$ & Compactified mirror curve at parameter $q$ & \\
\hline
$F_{g,n}^{\cX,(\cL,f)}$ & A-model open GW potential & depends on $\cX$, $\cL$ and the framing $f$
\\
\hline
$\omega_{g,n}$ & B-model higher genus invariants from the  EO recursion& symmetric meromorphic form on $(\oC_q)^n$.
\\
\hline
$\check F_{g,n}$ & B-model open potential & defined as the indefinite integral of $\omega_{g,n}$  \\
\hline
\end {tabular}
\end{center}

\section{Geometry and the A-model of a toric Calabi-Yau $3$-orbifold}
\label{sec:toric}

The Remodeling Conjecture \cite{Ma,BKMP09,BKMP10} concerns the open-closed all genus Gromov-Witten invariants of a semi-projective toric Calabi-Yau $3$-orbifold. We fix the notations in this section.

\subsection{Toric Calabi-Yau 3-orbifolds}
A Calabi-Yau 3-fold $X$ is toric if it contains the algebraic torus
$\bT=(\bC^*)^3$ as a Zariski dense open subset, and the action of
$\bT$ on itself extends to $X$. All Calabi-Yau 3-folds are non-compact.
There is a rank 2 subtorus $\bT' \subset \bT$
which acts trivially on the canonical line bundle of $X$.
We call $\bT'$ the Calabi-Yau torus. Then $\bT\cong \bT'\times \bC^*$. Let $\bT'_\bR\cong U(1)^2$ be the
maximal compact subgroup of $\bT'$.

Let $M'=\Hom(\bT',\bC^*)\cong \bZ^2$  and $N'=\Hom(\bC^*,\bT')$ be the character lattice
and the cocharacter lattice of $\bT'$, respectively. Then $M'$ and $N'$
are dual lattices.  Let $X_\Sigma$ be a toric Calabi-Yau 3-fold defined by a
simplicial fan $\Si \subset  N'_\bR\times \bR$,
where $N'_\bR:=N'\otimes_{\bZ}\bR\cong \bR^2$ can be identified with
the Lie algebra of $\bT_\bR'$.
Then $X_\Si$ has at most quotient singularities. We assume that
$X_\Si$ is semi-projective, i.e., $X_\Si$ contains at least one $\bT$-fixed point, and $X_\Si$ is projective over its affinization
$X_0:=\Spec H^0(X_\Si,\cO_{X_\Si})$.  Then the support of the fan $\Si$ is
a strongly convex rational polyhedral cone $\Si_0\subset N'_\bR \times \bR \cong \bR^3$,
and $X_0$ is the affine toric variety defined by the 3-dimensional cone $\Si_0$.
There exists a convex polytope $P \subset N'_\bR\cong \bR^2$ with vertices in the lattice $N'\cong \bZ^2$, such
that $\Si_0$ is the cone over $P\times \{1\}\subset N'_\bR\times \bR $, i.e.
$\Si_0=\{ (tx, ty, t): (x,y)\in P, t\in [0,\infty)\}$.
The fan $\Si$ determines a triangulation of $P$:
the 1-dimensional, 2-dimensional, and 3-dimensional cones
in $\Si$ are in one-to-one correspondence with
the vertices, edges, and faces of the triangulation of $P$, respectively.
This triangulation of $P$ is known as the toric diagram or the dual graph of the simplicial
toric Calabi-Yau 3-fold $X_\Si$.

Let $\Si(d)$ be the set of $d$-dimensional cones in $\Si$, and let
$\fp'=|\Si(1)|-3$. We label $\rho_1,\dots,\rho_{\fp'+3}\in \Si(1)$ (and usually denote the their generators in $P\times \{ 1 \}$ by $b_1,\dots, b_{\fp'+3}$). Then $X_\Si$ is a GIT quotient
$$
X_\Sigma = \bC^{3+\fp'}\sslash G_\Si =  (\bC^{3+\fp'}- Z_\Si)/G_\Si
$$
where $G_\Si$ is a $\fp'$-dimensional subgroup of $(\bC^*)^{3+\fp'}$ and
$Z_\Si$ is a Zariski closed subset of $\bC^{3+\fp'}$ determined by
the fan $\Si$. If $X_\Si$ is a smooth toric Calabi-Yau 3-fold
then $G_\Si\cong (\bC^*)^{\fp'}$ and $G_\Si$ acts freely on
$\bC^{3+\fp'}-Z_\Si$. In general we have $(G_\Si)_0\cong (\bC^*)^{\fp'}$,
where $(G_\Si)_0$ is the connected component of the identity, and
the stabilizers of the $G_\Si$-action on $\bC^{3+{\fp'}}-Z_\Si$ are at most finite and
generically trivial. The stacky quotient
$$
\cX = [(\bC^{3+\fp'}- Z_\Si)/G_\Si]
$$
is a toric Calabi-Yau 3-orbifold; it is a toric Deligne-Mumford
stack in the sense of Borisov-Chen-Smith \cite{BCS05}. For any $\si\in \Si(d)$ there is a codimensional $d$ closed substack $\fl_\si$ associated to $\si$. We denote its generic stabilizer to be $G_\si$. When $\si \in \Si(3)$, then $\fl_\si$ is a $\bT$-fixed (probably stacky) point.

We denote the $\bar D_i$ to be the first Chern class of divisors $\widetilde D_i=\{Z_i=0\}$, where $Z_i$ are homogeneous coordinates for $i=1,\dots,\fp'$.

\subsection{Toric crepant resolution and extended K\"{a}hler classes}
Given a semi-projective simplicial toric Calabi-Yau 3-fold $X_\Si$ which is not
smooth, there exists a subdivision $\Si'$ of $\Si$, such that
$$
X_{\Si'} = (\bC^{3+\fp}-Z_{\Si'})/G_{\Sigma'} \longrightarrow
X_\Si = \big((\bC^{3+\fp'}-Z_\Si)\times (\bC^*)^\fs\big)/G_{\Si'}
$$
is a crepant toric resolution, where $X_{\Si'}$ is a smooth
toric Calabi-Yau 3-fold, $\fp+3=|\Si'(1)|$, and $G_{\Si'}\cong (\bC^*)^{\fp}$. We denote $\rho_{\fp'+4},\dots,\rho_{\fp+3}\in \Si'(1)\setminus \Si(1)$ as new $1$-cones in $\Si'$, and their generators in $P\times \{ 1 \}$ by $b_{\fp'+4},\dots, b_{\fp+3}$. $X_{\Si'}$ and $X_\Si$ are GIT quotients of the same $G_{\Si'}$-action
on $\bC^{3+\fp}$ with respect to different stability conditions. We set $b_i=(m_i,n_i,1)\in P\times\{1\}$.

Let $K_{\Si'}\cong U(1)^{\fp}$ be the maximal compact subgroup of $G_{\Si'}\cong (\bC^*)^{\fp}$.
The $G_{\Si'}$-action on $\bC^{3+\fp}$ restricts to a Hamiltonian $K_{\Si'}$-action on
the K\"{a}hler manifold $(\bC^{3+\fp}, \omega_0=\sqrt{-1} \sum_{i=1}^{3+\fp} dz_i \wedge d\bar{z}_i)$,
with moment map $\tmu: \bC^{3+\fp}\to \mathrm{Lie}(K_{\Si'})^\vee=H^2_{G_{\Si'}}(\bC^{3+\fp};\bR)=\bR^{\fp}$. There exist two (open) cones
$\sC$ and $\sC'$ in $\bR^{\fp}$ such that
\begin{align*}
&\quad\ \tilde{\mu}^{-1}(\vec{r})/K_{\Si'}\\[1ex]
&= \begin{cases}
(\bC^{3+\fp}-Z_{\Si'})/G_{\Sigma'} = X_{\Si'}, & \vec{r}\in \sC',\\
\big((\bC^{3+\fp'}-Z_{\Si})\times (\bC^*)^{\fp-\fp'}\big)/G_{\Sigma'} = (\bC^{3+\fp'}-Z_{\Si})/G_\Si = X_{\Si}, & \vec{r}\in \sC
\end{cases}
\end{align*}
Here $\sC'\subset \bR^{\fp}= H^2(X_{\Si'};\bR)$ is the K\"{a}hler cone of $X_{\Si'}$ and
$\sC\subset \bR^{\fp}$ is the extended K\"{a}hler cone of $X_\Si$. Let $D_i\in H^2_{G_{\Si'}}(\bC^{3+\fp};\bR)=\mathrm{Lie}(K_{\Si'})^\vee$ for $i=1,\dots,\fp$ be the equivariant Poincar\'e dual to $\{Z_i=0\}$ in $\bC^{3+\fp}$ ($Z_i$ are coordinates).

The parameter $\vec{r}\in \sC$ determines a K\"{a}hler form $\omega(\vec{r})$ on
the toric Calabi-Yau 3-orbifold $\cX=[(\bC^{3+\fp'}-Z_\Si)/G_\Si]$. As shown in \cite{Ir09}, there is a canonical decomposition
\begin{equation}
\mathrm{Lie}(G_{\Si'})^\vee\cong H^2(X_\Si;\bC)\oplus \bigoplus_{j=\fp'+4}^{\fp+3}\bC D_j.
\label{eqn:eK-splitting}
\end{equation}
In particular, we write $\bar H$ for the projection of $H\in \mathrm{Lie}(G_{\Si'})^\vee$ to $H^2(X_\Si;\bC)$. Our notation $\bar D_i\in H^2(X_\Si;\bC)$ indeed satisfies this convention, and in particular $\bar D_j=0$ for $j=\fp'+4,\dots, \fp+3$. This splitting also applies to the K\"aher cone
\[
\sC=(\text{K\"ahler cone of $X_{\Si}$})\\ \bigoplus(\sum_{i=\fp'+4}^{\fp+3} \bR_{>0} D_i).
\]

\begin{example}
  \label{exp:polytope}
The polytope $P$ and the triangulation is the following. The vertices are $(0,0),(0,2),(1,0),(2,-1)$ (Figure \ref{fig:defining-polytope}). The fan $\Si$ is a cone over the triangulated $P$. If one adds the dashed line to the triangulation, we get the fan $\Si'$.
\begin{figure}[h]
\begin{center}
\psfrag{si0}{\small $\si_0$}
\psfrag{si1}{\small $\si_1$}
\psfrag{si2}{\small $\si_2$}
\psfrag{tau0}{\small $\tau_0$}
\includegraphics[scale=0.6]{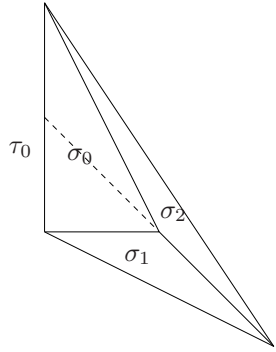}
\caption{The defining polytope of our main example $\cX$.}
\label{fig:defining-polytope}
\end{center}
\end{figure}
\end{example}

\subsection{Chen-Ruan orbifold cohomology}\label{sec:CR}
Let $U=\bC^{3+\fp'}-Z_\Si$, so that $\cX=[(\bC^{3+\fp'}- Z_\Si)/G_\Si]$. Given $v\in G_\Si$, let
$U^v = \{ z\in U: v\cdot z= z\}$.
The inertia stack of $\cX$ is
$$
\IX =\bigcup_{v\in \BSi} \cX_v
$$
where $\BSi=\{v\in G_\Si: U^v\neq \emptyset\}$ and $\cX_v =[U^v/G_\Si]$.

We consider cohomology with $\bC$-coefficient. As a graded $\bC$-vector space,
the Chen-Ruan orbifold cohomology \cite{CR04} of $\cX$ is
$$
H^*_\CR (\cX;\bC) = \bigoplus_{v\in \BSi} H^*(\cX_v;\bC)[2\age(v)], \quad \age(v)\in \{0,1,2\}.
$$
So
\[
H^2_\CR(\cX;\bC)=H^2(X_\Si;\bC)\bigoplus( \bigoplus_{\age(v)=1}\bC \one_{v}).
\]
One can show $H^2_\CR(\cX;\bC)\cong\mathrm{Lie}(G_{\Si'})^\vee\cong \bC^\fp$ as a vector space. Under this isomorphism $\one_{v}$ corresponds to a $D_i$  for $i=\fp'+4,\dots,\fp+3$ (c.f. Equation \eqref{eqn:eK-splitting}).

Let $\fg :=|\Int(P)\cap N'|$
be the number of lattice points in $\Int(P)$, the interior of the polytope $P$, and
let $\fn:=|\partial P\cap N'|$ be the number of lattice points on $\partial P$, the boundary of the polytope $P$.
Then
\begin{align*}
\fp' &= |\Si(1)|-3= \dim_\bC  H^2(X_\Si;\bC),\\
\fp&= |\Si'(1)|-3= |P \cap N'|-3=  \dim_{\bC}\mathrm{Lie}(G_{\Si'})^\vee=\dim_{\bC} H^2_\CR(\cX;\bC)\\
   &= \fg+\fn-3,\\
\fg &= |\Int(P)\cap N'| =\dim_\bC H^4_\CR(\cX;\bC),\\
N&:=\chi(\cX)= |\Si'(3)|= 2\Area(P)= \dim_\bC H^*_\CR(\cX;\bC)\\
 &= 1+ \fp+\fg = 2\fg-2+\fn.
\end{align*}

We choose $H_1,\dots, H_\fp$ in the closure of the extended K\"ahler cone $\bar \sC\subset \mathrm{Lie} (K_{\Si'})^\vee$ such that the following is true.
\begin{itemize}
\item $\{H_1,\ldots,H_\fp\}$ is a basis of $\mathrm{Lie} (K_{\Si'})^\vee=\bR^\fp$.
\item $\{\bar{H}_1,\ldots, \bar{H}_{\fp'}\}$ is a basis
of $H^2(X_{\Si};\bR)$. We require $H_i=\bar H_i$ under the identification \eqref{eqn:eK-splitting} for $i=1,\dots,\fp'$, and $H_i$ is in the K\"ahler cone of $X_{\Si}$.
\item $H_a= D_{3+a}$ for $a= \fp'+1,\dots,\fp$.
\item We choose $H_a$ in the lattice generated by all $D_i$.
\end{itemize}

The $\fp$ parameters
$\vec{r}=(r_1,\ldots,\allowbreak r_{\fp})$ are extended K\"{a}hler parameters
of $\cX$, where $r_1,\ldots, r_{\fp'}$ are K\"{a}hler parameters of $\cX$.
The A-model closed string flat coordinates are complexified
extended K\"{a}hler parameters
$$
\tau_a=-r_a+\sqrt{-1}\theta_a,\quad a=1,\ldots, \fp.
$$
\begin{example}[Example \ref{exp:polytope},continued]
  \label{exp:number-of-parameters}
  \begin{align*}
    &\cX=(\bC^4-Z_{\Si'})/G_{\Si'}=(\bC^5-Z_{\Si})/G_{\Si}, \\
    &\fp=2,\ \fp'=1,\ \fg=1,\ \fn=4,\ N=4,\\
    &\text{K\"ahler cone}=\bR_{>0} H_1,\\
    &\text{extended K\"ahler cone} \sC=\bR_{>0} H_1\oplus \bR_{>0}H_2.
  \end{align*}
  The moment map of $K_{\Si}$ is given by
  \[
  |Z_1|-4|Z_2|^2+|Z_3|^2+2|Z_4|^2,
  \]
  while the moment map $\tmu$ of $K_{\Si'}$ is given by
  \[
  (|Z_1|-4|Z_2|^2+|Z_3|^2+2|Z_4|^2,|Z_1|^2+|Z_3|^2-2|Z_5|^2).
  \]
  $H_1,H_2$ are $(1,0)$ and $(0,-1)$ respectively in the image of the moment map of $K_\Si$.
\end{example}

\subsection{Equivariant cohomology and its canonical basis}

\label{sec:equivariantCR}

We can work with the equivariant version of the Chen-Ruan (or ordinary) cohomology. We set the notions here for $\bT$-equivariant cohomology, while the notions for  $\bT'$-equivariant cohomology are self-evident if we change $\bT$ to $\bT'$.

The torus $\bT$ fits into the following exact sequence
\[
1\to G_\Si\to \tbT\cong (\bC^*)^{\fp+3} \to \bT\to 1,
\]
where $\tbT$ acts on $\bC^{\fp+3}$ in the standard way.

Let $R_\bT=H^*_{\bT}(\mathrm{pt})$. Then
\[
R_\bT=\bC[\su_1,\su_2,\su_3],\ S_\bT=\bC(\su_1,\su_2,\su_3).
\]
These $\su_1,\su_2,\su_3$ are basis of $M$, and characters of $\bT$. Choose $\su_3$ such that  $\bT'=\mathrm{ker}(\su_3)$.
Setting $\su_3=0$ passes into the equivariant setting for $\bT'$. We have $R_{\bT'}=H^*_{\bT'}(\mathrm{pt})$. Then
\[
R_{\bT'}=\bC[\su_1,\su_2],\ S_{\bT'}=\bC(\su_1,\su_2).
\]

One defines $D_i^\bT\in H_\tbT^2(\bC^{\fp+3};\bC)\cong H^2_{\CR,\bT}(\cX;\bC)$ as the $\tbT$-equivariant Poincar\'e dual of $\{Z_i=0\}\subset \bC^{\fp+3}$ -- it is a lift of $D_i$ into the equivariant cohomology. Similarly, we denote $\bar D_i^\bT$ as the equivariant first Chern class of $ \{ Z_i=0 \} \subset X_\Si$. This is an equivariant lift of $\bar D_i$. In particular, $\bar D_i^\bT=0$ for $i=\fp'+4,\dots,\fp+3$.

We have
\begin{equation*}
H^2_{\CR,\bT}(\cX;\bC)=H^2_\bT(X_\Si;\bC)\bigoplus( \bigoplus_{\age(v)=1} \bC \one_{v}).
\end{equation*}
We still denote the projection of $H$ to $H^2_\bT(X_\Si;\bC)$ by $\bar H$. We choose an equivariant lift $\bar H^\bT_a$ of $\bar H_a$ for $a=1,\dots,\fp'$. Then
\[
H^2_{\CR,\bT}(\cX;\bC)=\bigoplus_{i=1}^3 \bC\su_i \bigoplus_{a=1}^{\fp'}\bC \bar H_a^\bT \bigoplus( \bigoplus_{\age(v)=1} \bC \one_{v}).
\]
In the rest of this paper, whenever an equivariant (quantum, Chen-Ruan or classical) cohomology $QH^*_\bT, H^*_\bT, H^*_{\CR,\bT}$ is omitting the coefficient, we always regard it as over $\bC$.

We define the following ``extended Mori cone''.
\[
\bK_\eff=\bigcup_{\si\in \Si(3)}\{\beta \in \mathrm{Lie}(K_{\Si'})| \langle \beta, D_i\rangle\in \bZ_{\geq 0}, \Si(1)\ni\rho_i\notin \si\}.
\]
For any $\beta\in \bK_\eff$, $v(\beta)=\sum_{i=1}^{\fp+3} \lceil \langle D_i,\beta \rangle \rceil b_i$. We choose a finite extension field $\bST$ of $S_\bT$ such that $H^*_{\CR,\bT}(\cX;\bC)\otimes_{R_\bT} S_\bT$ is a semisimple algebra. We describe the canonical basis here.

For any $\si\in \Si(3)$, $\cX_\si=[\bC^3/G_\si]\subset \cX$ is an affine toric CY $3$-orbifold. An $\bST$-basis of $H^*_{\CR,\bT}(\cX_\si;\bC)\otimes_{R_\bT} S_\bT$ are $\one_h,\ h\in G_\si$, each corresponding to a connected component of $\cI \cX_\si$. Their products are
\[
 \one_h\cup_{\cB G_\si}  \one_{h'}= \one_{hh'}.
\]
For any $\gamma\in G_\si^*$, introduce
\[
\phi_\gamma=\sum_{h\in G_\si^*}\chi_{\gamma}(h^{-1})  \frac{\one_{h}}{\prod_{i=1}^3 \sw_i(\si)^{c^\si_i(h)}}.
\]
Here $\sw_i(\si)$ is the weight of the $\bT$ action on $T_{\fl_\si}\cX_\si$, while $h\in G_\si$ acts on the $i$-th factor of $(\bC^*)^3$ by multiplying $e^{2\pi\sqrt{-1}c^\si_i(h)}$. They are canonical basis of $H^*_{\CR,\bT}(\cX_\si;\bC)\otimes_{R_\bT} S_\bT$ \[\phi_\gamma\cup_{\cX_\si} \phi_{\gamma'}=\delta_{\gamma,\gamma'} \phi_{\gamma}.\] The sum of the pullbacks of the inclusion maps $\cX_\si\hookrightarrow \cX$ identifies
\[
H^*_{\CR,\bT}(\cX;\bC)\otimes_{R_\bT} S_\bT\cong \bigoplus_{\si\in \Si(3)} H^*_{\CR,\bT}(\cX_\si;\bC)\otimes_{R_\bT} S_\bT.
\]
The basis $\{ \phi_{\bsi}|\bsi=(\si,\gamma),\ \si\in \Si(3), \gamma\in G_\si^*\}$ is a canonical basis. We denote the set of A-model canonical basis by $I_\Si=\{\bsi=(\si,\gamma)|\si\in \Si(3), \gamma\in G_\si^*\}$, and $N=\# I_\Si=\dim_{\bC}H^*_{\CR}(\cX;\bC)$. Let $\phi^\bsi$ be the dual basis to $\phi_\bsi$ under the equvariant Poincar\'e pairing.

\subsection{Toric graphs}
The action of the Calabi-Yau torus $\bT'$ on $\cX$ restricts
to a Hamiltonian $\bT'_\bR$-action on the K\"{a}hler orbifold
$(\cX,\omega(\vec{r}))$, with moment map $\mu':\cX\to M'_\bR=\bR^2$.
The 1-skeleton $\cX^1$ of the toric Calabi-Yau 3-fold $\cX$ is the union of 0-dimensional
and 1-dimensional orbits of the $\bT$-action on $\cX$.
The image $\mu'(\cX^1) \subset \bR^2$ is a planar trivalent graph, which
is known as the toric graph of the symplectic toric Calabi-Yau 3-orbifold
$(\cX,\omega(\vec{r}))$. The toric graph depends
also on the symplectic structure of $\cX$.

\subsection{Aganagic-Vafa Lagrangian branes}
\label{sec:AV}
An Aganagic-Vafa Lagrangian brane in a toric Calabi-Yau 3-orbifold $\cX$ is a Lagrangian
sub-orbifold of the form
$$
\cL=[\tL/K_{\Si'}]\subset \cX=[\tmu^{-1}(\vec{r})/K_{\Si'}]
$$
where
\begin{align*}
 \tL = \Bigg\{ &(z_1,\ldots, z_{3+p+s})\in \tmu^{-1}(\vec{r}): \\
&\sum_{i=1}^{3+p+s}\hat{l}_i^1|z_i|^2=c_1, \sum_{i=1}^{3+p+s}\hat{l}_i^2|z_i|^2=c_2,\
 \arg(z_1\cdots z_{3+p+s}) = c_3 \Bigg\},
\end{align*}
$c_1,c_2,c_3$ are constants, and
$$
\sum_{i=1}^{3+p+s} \hat{l}^\alpha_i=0,\quad \alpha=1,2.
$$
The compact 2-torus $\bT'_\bR\cong U(1)^2$ acts
on $\cL$, and under its moment map $\mu'$, the image $\mu'(\cL)$ is a point on the toric graph $\Gamma=\mu'(\cX^1)$ and it
is not a vertex.  The Lagrangian $\cL$ intersects a unique $1$-dimensional $\bT$ orbit $\fl\subset \cX$ such that $\overline \fl=\fl_{\tau_0}$ where $\tau_0\in \Si(2)$.
We have $\fl\cong \bC^*\times \cB \bmu_\fm$ for some positive integer $\fm$ where $G_{\tau_0}\cong \bmu_\fm$. Here $\bmu_\fm\cong \bZ_\fm$ is a  multiplicative subgroup of $U(1)$. When $\fm=1$, $\cL\cong S^1\times \bC$ is smooth; when $\fm>1$,
$\cL$ is smooth away from $\cL\cap \fl \cong S^1\times \cB\bmu_m$. We require our Lagrangian $\cL$ is \emph{outer}, i.e. the closure of $\fl$ in $\cX$ is not compact ($\overline \fl=\bC\times \cB \bmu_\fm$). Then $\tau_0$ lies in a unique $\si_0\in \Si(3) $. By a rearrangement of order, we require $b_1,b_2,b_3$ span $\si_0$, while $b_2,b_3$ span $\tau_0$ and $b_1,b_2,b_3$ are labeled counterclockwisely as the vertices of $\si_0$. We have a short exact sequence of finite abelian groups
\[
1\to G_{\tau_0}\cong \bmu_\fm\to G_{\si_0}\to \bmu_{\fr}\to 1.
\]
The stabilizer at the $\bT$-fixed point $\fl_{\si_0}$ is $G_{\si_0}$ while the generic stabilizer on $\fl_{\tau_0}=\bar\fl$ is $G_{\tau_0}$.

\begin{figure}[h]
\begin{center}
\psfrag{L}{\tiny $\mathcal L$}
\psfrag{lt0}{\tiny $\fl_{\tau_0}$}
\psfrag{ls0}{\tiny $\fl_{\si_0}$}
\includegraphics[scale=0.3]{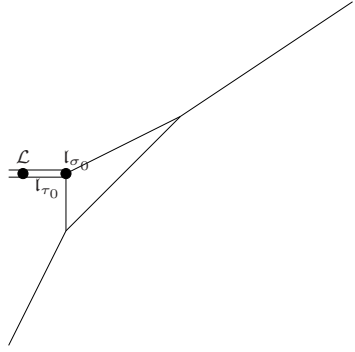}
\caption{The toric graph of our main example $\cX$. The gerby leg $\fl_{\tau_0}$'s image is a line, but we draw a double line to denote it is gerby.}
\label{fig:graph}
\end{center}
\end{figure}

\begin{example}[Example \ref{exp:number-of-parameters}, continued]
  \label{exp:graph}
  Given the choice of $\si_0$ and $\tau_0$ as in Figure \ref{fig:defining-polytope}, the toric graph for $\cX$ and the phase of the Aganagic-Vafa brane $\cL$ is in Figure \ref{fig:graph}. We have
  \begin{align*}
    &G_{\tau_0}\cong \bmu_2,\ \fm=2,\ G_{\si_0}\cong \bmu_2,\ \fr=1,\\
    & \fl_{\tau_0}\cong \bC\times \cB \bmu_2,\ \fl_{\si_0}=[\mathrm{pt}/\bmu_2],\\
    &G_{\si_1}=G_{\si_2}=\{1\},\ \fl_{\tau_1}=\fl_{\tau_2}=\mathrm{pt}.
  \end{align*}
\end{example}

There is also a \emph{framing} datum $f\in\bZ$. It prescribes a subtorus $\bT'_f\subset \bT'$ by $\bT'_f=\mathrm{ker}(\su_2-f \su_1).$ We denote $H^*_{\bT'_f}(\mathrm{pt})=\bC[\sv]$ such that $\sv$ is the image of $\su_1$ under the restriction, while the image of $\su_2$ is $f\sv$.

\subsection{Primary closed Gromov-Witten invariants and\\ A-model free energies}

We define genus $g$, degree $\beta$ primary closed Gromov-Witten invariants:
$$
\langle \gamma_1,\dots, \gamma_\ell \rangle^{\cX}_{g,\ell,\beta}=
\int_{[\Mbar_{g,\ell}(\cX,\beta)^{\bT'}]^\vir }
\frac{\prod_{i=1}^\ell \ev_i^*(\gamma_i) }{e_{\bT'}(N^\vir)}\\
\in R_{\bT'}.
$$
The A-model genus $g$ free energy $\tF_g^{\cX}$ is
a generating function of primary genus $g$ closed Gromov-Witten invariants, as a function of $\btau \in H^{2}_{\CR,\bT'}(\cX)$.
\begin{eqnarray*}
\tF_{g}^{\cX}(\btau)
&=& \sum_{\beta,\ell\geq 0}
\frac{\langle \btau,\dots,\btau \rangle^{\cX}_{g,\ell,\beta}}{\ell!}. \\
\end{eqnarray*}

\subsection{Open Gromov-Witten invariants and A-model open potentials}

The BKMP Remodeling Conjecture builds the mirror symmetry for the open Gromov-Witten potentials $\tF_{g,n}^{\cX,(\cL,f)}(\tX_1,\ldots, \tX_n,\btau)$ as well as free energies $\tF_{g}^{\cX}(\btau)$.

The Aganagic-Vafa Lagrangian brane $\cL$ is homotopic to $S^1\times \cB\bmu_\fm$, so
$$
H_1(\cL;\bZ)= \bZ\times \bmu_\fm.
$$

Open GW invariants of $(\cX,\cL)$  count holomorphic maps
$$
u: (\Sigma, x_1,\ldots, x_\ell, \partial \Sigma =\coprod_{j=1}^n R_j) \to (\cX,\cL)
$$
where $\Si$ is a bordered Riemann surface with stacky points $x_i=\cB\bZ_{r_i}$ and
$R_j\cong S^1$ are connected components of $\partial \Sigma$. These invariants depend on
the following data:
\begin{enumerate}
\item the topological type $(g,n)$ of the coarse moduli of the domain, where $g$
is the genus of $\Si$ and $n$ is the number of connected components of $\partial\Sigma$,
\item the degree $\beta'=u_*[\Si]\in H_2(\cX,\cL;\bZ)$,
\item the winding numbers $\mu_1,\ldots,\mu_n \in \bZ$ and the monodromies $k_1,\ldots, k_n \in \bmu_\fm$,
where $(\mu_j,k_j)= u_*[R_j]\in H_1(\cL;\bZ)=\bZ\times \bmu_\fm$,
\item the framing $f\in\bZ$  of $\cL$.
\end{enumerate}
We call the pair $(\cL,f)$ a framed Aganagic-Vafa Lagrangian brane.
We write $\vmu=((\mu_1,k_1),\ldots, (\mu_n,k_n))$. Here $k_i\in \{0,\dots, \fm-1\}$ is regarded as an element $e^{2\pi\sqrt{-1}k_i/\fm}\in \bmu_\fm$.  Let $\cM_{(g,n),\ell}(\cX,\cL\mid \beta',\vmu)$ be the moduli space\linebreak
parametrizing maps described above, and let $\Mbar_{(g,n),\ell}(\cX,\cL\mid\beta',\vmu)$
be the partial compactification: we allow the domain $\Si$ to have nodal singularities, and
an orbifold/stacky point on $\Si$ is either a marked point $x_j$ or a node; we require
the map $u$ to be stable in the sense that its automorphism group is finite. Evaluation
at the $i$-th marked point $x_i$ gives a map $\ev_i: \Mbar_{(g,n),\ell}(\cX,\cL\mid\beta',\vec{\mu})\to \IX$.

Given $\gamma_1,\ldots, \gamma_\ell\in H^*_{\CR,\bT'}(\cX;\bC)$, we define
\begin{align}
  \label{eqn:open-GW}
\langle \gamma_1,\ldots, \gamma_\ell\rangle_{g,\beta,\vec{\mu}}^{\cX,(\cL,f)}
&:= \int_{[\Mbar_{(g,n),\ell}(\cX,\cL\mid\beta',\vec{\mu})^{\bT_\bR'}]^\vir }
\left.\frac{\prod_{i=1}^\ell \ev_i^*\gamma_i }{e_{\bT'_\bR}(N^\vir)}\right|_{(\bT_f)_\bR}\\
&\in  \bC \sv^{\sum_{i=1}^\ell \frac{\deg\gamma_i}{2}-1}\nonumber
\end{align}
where $\bT_\bR'$ and $(\bT_f)_\bR$ are the corresponding real sub-torus of $\bT'$ and $\bT_f$, which preserves the Lagrangian $\cL$, $H^*_{\bT'_f}(\mathrm{pt})=\bC[\sv]$, $\beta\in H_2(\cX;\bZ)$ and  $\beta'=\beta+\sum{\mu_i}\in H_2(\cX,\cL;\bZ)$.

For $\btau\in H^2_{\CR,\bT'}(\cX;\bC)$, we define
generating functions $\tF_{g,n}^{\cX,(\cL,f)}$ of open Gromov-Witten invariants as follows.
\begin{align}
  \label{eqn:open-potential}
&\quad\ \tF_{g,n}^{\cX,(\cL,f)}(\tX_1,\ldots, \tX_n,\btau)\\
&= \sum_{\beta,\ell\geq 0}\sum_{(\mu_j,k_j)\in \bZ\times \bmu_\fm}
\frac{\langle \btau^\ell \rangle^{\cX,(\cL,f)}_{g,\beta,(\mu_1,k_1)\cdots,
 (\mu_n, k_n)}Q^\beta}{\ell!}
 \nonumber \\
&\quad\   \cdot \otimes_{j=1}^n \Big(\tX_j^{\mu_j} (-(-1)^\frac{-k_j}{\fm})\one'_\frac{-k_j}{\fm}\Big)
\in H^*_\CR(\cB\bmu_\fm;\bC)^{\otimes n}\nonumber
\end{align}
where $H^*_\CR(\cB\bmu_\fm;\bC)=\oplus_{k=0}^{m-1} \bC \one'_{\frac{k}{\fm}}$, and $\one'_{-\frac{k}{\fm}}=\one'_{\frac{\fm-k}{\fm}}$ for $k\in \{1,\dots, \fm-1\}$.
The closed Gromov-Witten invariants and potential can be viewed as a special case for $n=0$ i.e. there is no boundary on the domain curve. The variable $Q$ is the Novikov variable
\[
Q^\beta=Q_1^{\langle H_1,\beta\rangle}\dots Q_{\fp'}^{\langle H_{\fp'},\beta\rangle}.
\]

\subsection{Descendant closed Gromov-Witten invariants} Given
$\gamma_1,\ldots,\gamma_n$, we define a generating function of genus $g$, $n$-point
descendant closed Gromov-Witten invariants:
$$
\left\llangle \gamma_1\psi_1^{k_1},\ldots,\gamma_n\psi^{k_n}_n
\right\rrangle_{g,n}^{\cX}=\sum_{\beta,\ell\geq 0}\frac{Q^\beta}{\ell!}
\left\langle \gamma_1\psi^{k_1}_1,\ldots,\gamma_n\psi^{k_n},
\btau^\ell\right\rangle_{g,n+\ell,\beta}^{\cX},
$$
where $\psi_i=c_1(\bL_i)$ and $\bL_i\to \Mbar_{g,n+\ell}(\cX,\beta)$ is line bundle
whose fiber at moduli point $[u:(C,x_1,\ldots,x_{n+\ell})\to \cX]$ is
the cotangent line $T_{x_i}^*C$ at the $i$-th marked point to (the coarse moduli
space of) the domain curve. In this notation $\tF_g(\btau)=\llangle\rrangle_{g,0}^\cX$.


\section{Mirror curves and the Landau-Ginzburg mirror}
\label{sec:b-model}

\subsection{Three B-models}

The Remodeling Conjecture by Mari\~no and Bouchard-Klemm-Mari\~no-Pasquetti \cite{Ma,BKMP09,BKMP10} expresses all genus open-closed Gromov-Witten invariants in terms of the Eynard-Orantin recursion on its mirror curve. The mirror curve plays the central role in the B-model, and is one of three related and equivalent B-models.

\subsubsection{Landau-Ginzburg model}

Recall we have a choice of K\"ahler basis $\{H_1,
\dots, H_\fp\}$ as given in section \ref{sec:CR}. The non-equivariant superpotential is
\[
W=\sum_{i=1}^{\fp+3} X_i,
\]
where
\begin{equation}
  \label{eqn:LG-torus}
\prod_{i=1}^{\fp+3} X_i^{\langle D_i, \beta\rangle }= q^\beta,\ \forall \beta \in \mathbb K_\eff.
\end{equation}
The Equation \eqref{eqn:LG-torus} prescribe a $3$-dimenionsal algebraic torus $\cY\subset (\bC^*)^{\fp+3}$. We regard this $W$ as the superpotential in the LG-model on this algebraic torus $\cY$. The parameters $q_1,\dots,q_\fp$ are complex parameters of the B-model. We use $q_{K}=(q_1,\dots,q_{\fp'})$ to denote the parameters corresponding to the \emph{K\"ahler} part, while $q_\orb=(q_{\fp'+1},\dots,q_{\fp})$ denotes the \emph{twisted} part. Under mirror symmetry, heuristically $\log q_K$ measures the mutual distances between vertices (corresponding to $3$-cones, or torus fixed points in $\cX$) in the toric graph.

We define
\begin{align*}
& a_{m_i,n_i}=1,\ i=1,2,3,\\
&\prod_{a=1}^\fp(a_{m_{a+3},n_{a+3}}(q))^{\langle D_{a+3},\beta\rangle } =q^\beta,\forall \beta \in \bK.
\end{align*}
Under the large radius limit $q_K\to 0$, $a_{m_{a+3},n_{a+3}}(q)\to 0$ for $a=1,\dots, \fp'$. The coordinates $X,Y$ and $a_{m_{a+3},n_{a+3}}(q)$ are specific to the choice of $\si_0$ and $\tau_0$ (coming from the position (phase) of the Aganagic-Vafa brane). The parameters $\log a_{m_{a+3},n_{a+3}}(q)\to \infty$ for $a=1,\dots, \fp'$ -- heuristically, in the toric graph they measure ``distances'' from other $3$-cones to $\si_0$, while $q_{\fp'+1},\dots,q_\fp$ are parameters for the orbifolds twisted sectors.

One can write the non-equivariant superpotential as the following form
\begin{align*}
&W=H_q(X,Y)Z,\\
&H_q(X,Y)=X^\fr Y^{-\fs}+Y^\fm+1+\sum_{a=1}^\fp a_{m_{3+a},n_{3+a}}(q) X^{m_{3+a}}Y^{n_{3+a}}.
\end{align*}

The equivariantly-perturbed superpotential is
\[
W^{\bT'}=W+\hat x,
\]
where $\hat x=\su_1 x + \su_2 y$. It is a holomorphic function defined on the universal cover $\widetilde \cY=\bC^3$ of $\cY$.\footnote{On the B-model side, we regard $\su_1$ and $\su_2$ as complex numbers.}

\subsubsection{Mirror curve}

The mirror curve $C_q\subset (\bC^*)^2$ is defined by the equation $H_q(X,Y)=0$. The defining polytope $P$ defines a polarized $2$-dimensional toric surface $\bS_P$ with an ample line bundle $L_P$, and $H_q(X,Y)$ extends to a section in $H^0(\bS_P;L_P)$. The zero section is the compactified mirror curve $\oC_q\in \bS_P$. It is of genus $\fg$, and intersects transversally with $\partial \bS_P$ at $\fn$ points (see Section \ref{sec:CR} for the definition of $\fg, \fn$).

In fact, there is an explicit construction of a flat family of toric surfaces over a neighborhood of $q=0$ in \cite{FLZ3}. Each generic fiber is a toric surface isomorphic to $\bS_P$, while the central fiber is $\cup_{\si\in \Si(3)} \bS_{P_\si}$, a normal crossing union of several toric surfaces -- each corresponds to the polytope $P_\si$ in the triangulation of the defining polytope $P$. The toric surface $\bS_P$ degenerates into $\bigcup_{\si\in\Si(3)} \bS_{P_\si}$ at $q=0$. The family $\fC$ of mirror curves is the zero section of a fiberwise ample line bundle -- on each non-degenerate fiber this line bundle restricts to $L_P$. At a generic point $q$, the fiber $\fC_{q}$ is just $\oC_{q}$. At $q=0$, $\fC_{q}$ degenerates into a nodal curve while each piece lives inside $\bS_{P_\si}$.   We denote this neighborhood of $q=0$ by $\mathfrak B$, and denote $$\mathfrak B^\circ=\mathfrak B\cap \left ( \bigcap_{a=1}^\fp \{q_a\neq 0\}\right).$$
Denote $\bD^\infty= \fC\cap (\partial \bS_P)$. This is the family of punctures $\oC_{q}\setminus C_{q}$

\begin{example}[Example \ref{exp:graph}, continued]
  \label{exp:b-model}
  \begin{align*}
    &W^{\bT'}=H_q(X,Y)+\su_1 x + \su_2 y,\\
    &\fr=1,\ \fs=0,\ \fm=2,\\
    &H_q(X,Y)=X+Y^2+1+q_1 X^2 Y^{-1}+q_2 Y,\\
    &q_{K}=q_1,\ q_\orb=q_2.
  \end{align*}
  The mirror curve is illustrated in Figure \ref{fig:mirror-curve}. It is a fattened tube around the toric graph Figure \ref{fig:graph}. Notice that the gerby leg contributes to two punctures $p_0,p_1$. The degenerated mirror curve $\fC_0$ is illustrated in Figure \ref{fig:degen-mirror-curve}.
\end{example}

\begin{figure}[h]
\begin{center}
\psfrag{p1}{\small $p_0$}
\psfrag{p2}{\small $p_1$}
\includegraphics[scale=0.3]{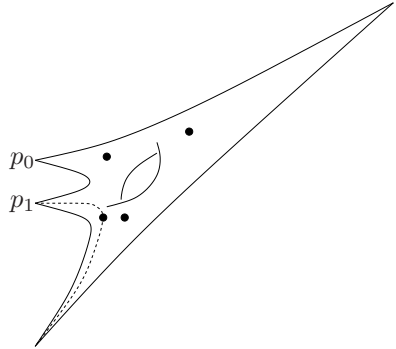}
\caption{The mirror curve of $\cX$. It can be regarded as a curve in the toric surface $\bS_P$. Black dots are the ramification points, while the dashed curve is the Lefschetz thimble passing through one ramification point with $\hat x \to \infty$.}
\label{fig:mirror-curve}
\end{center}
\end{figure}

\begin{figure}[h]
\begin{center}
\psfrag{p1}{\small $p_0$}
\psfrag{p2}{\small $p_1$}
\includegraphics[scale=0.6]{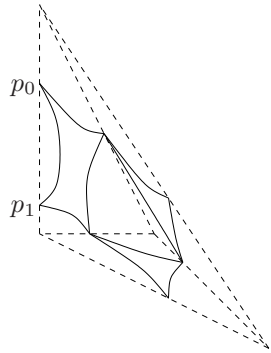}
\caption{The degenerated mirror curve of $\cX$ at $q=0$. It can be regarded as a curve in the degenerated toric surface - a normal crossing of two $\bP^2$ and a $\bP(1,1,2)$, whose moment polytopes are dashed lines.}
\label{fig:degen-mirror-curve}
\end{center}
\end{figure}

\subsubsection{Hori-Vafa mirror}

The Hori-Vafa mirror $(\check \cX_q,\Omega_q)$ is a non-compact Calabi-Yau $3$-fold ($\Omega_q$ is the Calabi-Yau form):
\begin{align*}
    &\check \cX_q=\{(u,v,X,Y)\in \bC^2\times (\bC^*)^2: uv=H_q(X,Y)\},\\
    &\Omega_q=\Res_{\check X_q}\left ( \frac{1}{H(X,Y,q)-uv}du\wedge dv \wedge \frac{dX}{X}\wedge \frac{dY}{Y}\right ).
\end{align*}

\subsection{Open and closed mirror maps from the B-model}

Throughout this section, we choose \emph{framing} $f\in \bZ$ the same as in Section \ref{sec:AV} and let $u_1=1$ and $u_2=f$ in the B-model setting. So
\[
\hat x = x+ fy,\ \hat y = y.
\]
The mirror curve equation for $C_q$ becomes
\[
H(\hat X,\hat Y)=\hat X^\fr \hY^{-\fs-\fr f} + \hY^{\fm} + 1 + \text{other terms.}
\]
The revised coordinates $\hX,\hY$ are specific to the position (``phase'') of the Aganagic-Vafa brane \emph{and} its framing. We denote the Seiberg-Witten form
\[
\Phi=\hat y d\hat x.
\]

\subsubsection{Closed mirror maps as periods}

Let us recall the geometry of the mirror curve $C_q$ and its compactification $\oC_q$ (c.f. Section \ref{sec:CR}).
\begin{itemize}
  \item The genus of $\oC_q$ is $\fg=h^4_\CR(\cX)$.
  \item The number of punctures of $C_q$ is $\fn=\fp-\fg+3$ ($\fp=h^2_\CR(\cX)$, i.e. the number of extended K\"ahler parameters.)
  \item $h_1(\oC_q)=2\fg$, $h_1(C_q)=h_1(\oC_q,\bD^\infty)=\fg+\fn-1=\fp+\fg+2$.
\end{itemize}
The lore for toric Calabi-Yau $3$-folds is that the mirror maps are obtained by integrations
\[
\tau_i=\frac{1}{2\pi\sqrt{-1}}\int_{A_i} \Phi.
\]
Here $A_i$ are $1$-cycles in $H_1(C_q)$, probably with non-integral coefficients in the presence of a toric orbifold. Since $\Phi=\hat y d \hat x$, this map is not well-defined. For practical computation in explicit examples, one can easily make sense of the result by ignoring the constants due to the non-trivial monodromy of $\hat y$. We make this statement a little more precise here.

Consider the inclusion $I:C_{q}\to (\bC^*)^2$ where $q\in \fB^\circ$. Denote the kernel of the map
\[
K_1(C_q;\bZ)=\ker I_*\subset H_1(C_q;\bZ).
\]
This sublattice $K_1(C_q;\bZ)\cong \bZ^{2\fg+\fn-3}$ consists of cycles with trivial $x$ and $y$ monodromy. One can lift any element $\gamma \in K_1(C_q;\bZ)$ to a loop $\tgamma\in\pi_1(\tC_q)$. Here $\tC_q=\pi^{-1}(C_q)$ under the universal covering map $\pi:\bC^2\to (\bC^*)^2$. The following integral
\[
\frac{1}{2\pi\sqrt{-1}}\int_{\tgamma} \Phi\in \bC
\]
does depend on the choice of $\tgamma$, but only up to an integral multiple of $2\pi\sqrt{-1}$. If the toric Calabi-Yau $3$-fold $\cX$ is smooth, then one can choose $A_a\in K^{\inv}_1(C_q;\bZ)$ for $a=1,\dots,\fp$, the \emph{monodromy invariant} cycles,\footnote{Monodromy under the Gauss-Manin connection around $\fB^\circ$, i.e. around hyperplanes $\{q_a=0\}$.} such that
\[
\frac{1}{2\pi\sqrt{-1}}\int_{A_a} \Phi\in \bC=\log q_a+O(q) \mod 2\pi\sqrt{-1}.
\]
Since for each $A\in K^\inv_1(C_q;\bZ)$ can be extended a family of \emph{flat} cycles over $\fB^\circ$, we have a map
\[
A\mapsto \frac{1}{2\pi\sqrt{-1}}\int_A \Phi\in \Hol (\fB^\circ, \bC/(2\pi\sqrt{-1}\bZ)).
\]
Here $\Hol$ is the set of holomorphic maps (in paricular $\log q_a$ is in this set). More subtleties arise when $\cX$ is an orbifold. To acquire the desired leading order behavior, we have to use cycles in $K_1^\inv(C_q;\bC)$. The explicit construction of cycles $A_1,\dots,A_\fp$ is in \cite{FLZ3}. We list the result here
\begin{proposition}
  \label{prop:A-integral}
  There exists cycles $A_1,\dots,A_{\fp'},\dots,A_\fp\in K^\inv_1(C_q;\bC)$ such that
  \[
  \frac{1}{2\pi\sqrt{-1}}\int_{A_a} \Phi=
  \begin{cases}
     \log q_a +O(q_K)+O(|q_\orb|^2),\ {a=1,\dots,\fp'}\\
     q_a+O(q_K)+O(|q_\orb|^2),\ {a=\fp',\dots,\fp}.
  \end{cases}.
  \]
\end{proposition}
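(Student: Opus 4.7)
The plan is to construct the cycles $A_a$ explicitly from the degeneration family $\fC \to \fB$ of mirror curves introduced above, and then to compute the periods $\int_{A_a}\Phi$ by a local residue calculation on the nodal central fiber combined with a first-order perturbation argument in $q$. The broad strategy mirrors the classical construction of the $B$-model mirror map for smooth toric Calabi-Yau $3$-folds, with the new ingredient being the treatment of the orbifold twisted sectors.

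I would split the $\fp$ cycles into two groups matching the dichotomy $q_K=(q_1,\dots,q_{\fp'})$ versus $q_\orb=(q_{\fp'+1},\dots,q_\fp)$. For a K\"ahler index $a\in\{1,\dots,\fp'\}$, the parameter $q_a$ corresponds to a $2$-cone $\tau_a\in\Si(2)$ separating two adjacent $3$-cones, and hence to a node $p_a$ of the central fiber $\fC_0=\bigcup_{\si\in\Si(3)} \fC_0\cap\bS_{P_\si}$. Near $p_a$ the total family has a local equation of the form $uv=q_a+(\text{higher order})$, and I take $A_a$ to be the vanishing cycle $\{|u|=|v|\}$, which is automatically fixed by the Picard-Lefschetz monodromy around $q_a=0$. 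Writing $\Phi=\hat y\,d\hat x$ in the local coordinates produces a simple pole whose residue integrates to $\log q_a$, with the corrections $O(q_K)+O(|q_\orb|^2)$ arising from the global gluing across the remaining nodes. For an orbifold index $a\in\{\fp'+1,\dots,\fp\}$, the parameter $q_a$ corresponds to a lattice point $b_{a+3}\in P$ that is a ray generator of $\Si'$ but not of $\Si$, and I take $A_a$ to be a small loop in $C_q$ around the corresponding vanishing component, weighted by characters of the stabilizer group $G_{\si_0}$ so as to lie in $K_1^\inv(C_q;\bC)$. Since this cycle contracts to a point on $\fC_0$, its period vanishes at $q=0$; differentiating the defining equation $H_q(X,Y)=0$ in $q_a$ and extracting a residue at the vanishing branch then yields the leading behavior $q_a+O(q_K)+O(|q_\orb|^2)$.

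The principal obstacle is verifying that the constructed cycles actually lie in $K_1^\inv(C_q;\bC)$, i.e.\ that they simultaneously have trivial Gauss-Manin monodromy around every hyperplane $\{q_b=0\}$ in $\fB^\circ$ and satisfy $I_*A_a=0$ in $H_1((\bC^*)^2;\bC)$, so that the integral is defined modulo $2\pi\sqrt{-1}\bZ$. Both requirements can fail for the naive integral representatives in the orbifold case, and must be restored by passing to $\bC$-linear combinations indexed by characters of the local orbifold groups $G_\si$. The correct combinations are ultimately forced by matching the $A$-periods against the GKZ/Picard-Fuchs system mirror to the quantum differential equation of $\cX$: the indicial behavior of that system at $q=0$ pins down the coefficients and reproduces the stated leading asymptotics.
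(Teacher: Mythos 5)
Your overall strategy is aligned with what the paper (via \cite{FLZ3}) actually does: build $A$-cycles from the degeneration family $\fC\to\fB$, read off leading asymptotics from local models near the nodes/punctures of $\fC_0$, enlarge coefficients to $\bC$ to deal with orbifold monodromy, and then appeal to the GKZ indicial analysis to finish. However, there are two genuine gaps.

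First, the claimed one-to-one correspondence ``K\"ahler index $a\leftrightarrow$ one interior $2$-cone $\tau_a$'' is false in general, so $A_a$ cannot simply be the vanishing cycle of a single node. If the triangulation defining $\Si$ has $v_{\mathrm{int}}$ interior vertices and $v_{\mathrm{bdy}}$ boundary vertices, Euler's formula gives $E_{\mathrm{int}}=3v_{\mathrm{int}}+v_{\mathrm{bdy}}-3=\fp'+2v_{\mathrm{int}}$ interior edges (hence nodes of $\fC_0$), strictly more than $\fp'$ once $v_{\mathrm{int}}>0$. Already in the paper's running example one has $\fp'=1$ but three interior edges. Moreover, the period of a single vanishing cycle $\delta_\tau$ is $\log q^{[\fl_\tau]}=\sum_b\langle H_b,[\fl_\tau]\rangle\log q_b+\cdots$, which mixes the $\log q_b$'s, and can even involve the orbifold parameters when $\langle H_b,[\fl_\tau]\rangle\neq 0$ for $b>\fp'$. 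Thus the correct $A_a$ for $a\leq\fp'$ is a genuine $\bC$-linear combination $\sum_\tau c_\tau\delta_\tau$ chosen so that $\sum_\tau c_\tau\langle H_b,[\fl_\tau]\rangle=\delta_{ab}$, a linear-algebra step your writeup omits and that requires checking this system is solvable. Related to this, Picard--Lefschetz only makes $\delta_\tau$ invariant under the monodromy around $\{q^{[\fl_\tau]}=0\}$; invariance under the monodromy around every hyperplane $\{q_b=0\}$ in $\fB^\circ$ is an additional condition (tied to $\delta_\tau\cdot\delta_{\tau'}=0$) that you assert but do not justify.

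Second, the orbifold-sector construction is too vague to give the stated leading behavior $q_a+O(q_K)+O(|q_\orb|^2)$. A loop around a node produces a $\log$-type period, and a loop around a puncture of $C_q$ produces $2\pi\sqrt{-1}$ times a residue of $\Phi=\hat y\,d\hat x$ whose value at $q=0$ is a nonzero constant ($\hat y$ tending to $\log$ of a root of $-1$), not $0$. Hence ``the cycle contracts to a point on $\fC_0$, so its period vanishes'' does not apply as stated -- $\Phi$ is not regular at the punctures. What actually makes the leading term linear in $q_a$ is that the appropriate $A_a$ is a character-weighted sum of loops around the $\fm$ punctures lying over the gerby leg; the character average kills the $q$-independent constant (and the $\log q_K$ terms), leaving only the $O(q_\orb)$ shift in $\hat y$ at the puncture. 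You gesture at this with ``weighted by characters of the stabilizer group,'' but the key cancellation argument that turns a nonvanishing residue into a leading term of order $q_a$ is missing, and without it the asymptotics in the statement are not established.

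Finally, be careful that invoking the GKZ system ``to pin down the coefficients'' is not a substitute for the cycle construction: the uniqueness of GKZ solutions with prescribed indicial exponents tells you that \emph{if} a period has the leading behavior $\log q_a$ (resp. $q_a$) then it equals the mirror-map coordinate, but it does not by itself produce a cycle whose period has that leading behavior. The two gaps above are precisely the places where that existence is at stake.
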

One has to understand the right hand side of this proposition as holomorphic functions in $q_1,\dots,q_\fp$ up to constants.

\subsubsection{Large radius limit for the open parameter and open mirror map}

A point on $C_q$ heuristically correponds to a \emph{B-brane}. The large radius limit of the open parameter should correspond to moving the Aganagic-Vafa brane $\cL$ to infinity along the leg. We define the \emph{large radius points} $\bar p_0(q),\dots,\bar p_{\fm-1}(q)$ on $\fC_q$ by requiring $\hX=0$ and
\[
\hY^\fm=-1,\ \text{when }q=0.
\]
We also denote $D_q^\ell$ to be a small neighborhood of $\bar p_\ell(q)$ on $\oC_{q}=\fC_q$.

There is a $\bmu^*_\fm$-action permuting these points $\bar p_\ell$, given by mapping the $\hat Y$-coordinate $(-1)^{\frac{k}{\fm}}$ to $(-1)^{\frac{1}{\fm}} \chi(1^\frac{k}{\fm})$ for $\chi\in \bmu_\fm^*$. We can identify these points $\bar p_\ell$, $\ell=1,\dots,\fm$ with elements in $\bmu^*_\fm$ in a non-canonical way. Define
$$
\psi_\ell:= \frac{1}{\fm} \sum_{k=0}^{\fm-1}\omega_\fm^{-k\ell} \one'_{\frac{k}{\fm}}, \quad \ell=0,1, \ldots, \fm-1,
$$
where $\omega_\fm= e^{2\pi\sqrt{-1}/\fm}$. Then $\psi_\ell$ for $\ell\in \bmu_\fm^*$ form a dual basis to $\one'_{\frac{k}{m}}$ for $k\in \bmu_\fm$.

As indicated in \cite{AKV}, there is an explicit constuction of a (linear combination of) path $\tgamma_0$ in $\tC_q$. Each component in $\tgamma_0$ is a path starting at the point where $\hat x$-coordinate is $\hat x_0$ near $\bar p_\ell$,  such that
\begin{itemize}
  \item $\tgamma_0$ descends to a cycle $A_0$ in $H_1(C_q;\bC)$ -- it has trivial monodromy in $\hat x$ while the monodromy in $\hat y$ is precisely $\hat y\mapsto \hat y-2\pi\sqrt{-1}$;
  \item The integration depends on the starting point:
  \begin{align}\frac{1}{2\pi\sqrt{-1}}\int_{\tgamma_0} \hat y d\hat x = \hat x_0 +  O(q_K)+O(|q_\orb|^2).\label{eqn:open-mirror-map-integral}\end{align}
    This will be the open mirror \eqref{eqn:open-mirror-map}.
\end{itemize}

\section{A quick review of the genus zero mirror theorem for toric orbifolds}
\label{sec:genus-0-mirror}

\subsection{Frobenius structures for toric Calabi-Yau 3-orbifolds: quantum cohomology}
Let
$$
\chi= \dim_{\bC}H^*_{\CR}(\cX) =\dim_{\bST} H^*_{\CR,\bT}(\cX;\bST).
$$
We choose a $\bST$-basis of $H^*_{\CR,\bT}(\cX;\bST)$ $\{ T_i: i=0,1,\ldots,\chi-1\}$
such that
$$
T_0=1,\quad T_a=\bar{H}_{3+a}^{\bT} \textup{ for } a=1,\ldots,\fp',\quad
T_a=\one_{b_{3+a}}\textup{  for }a=\fp'+1,\ldots, \fp,
$$
and for $i=\fp+1,\ldots,\chi-1$, $T_i$ is of the form $T_aT_b$ for some $a,b\in \{1,\ldots, \fp\}$.
Write $t=\sum_{a=0}^{\chi-1}\tau^a T_a$, and let $\tau'=(\tau_1,\ldots, \tau_{\fp'})$,
$\tau''=(\tau_0, \tau_{\fp'+1},\ldots, \tau_{\chi-1})$.
By the divisor equation,
$$
\llangle T_i,T_j,T_k\rrangle^{\cX,\bT}_{0,3} \in \ST[\![ \tQ, \tau'']\!],\quad
\llangle \hat{\phi}_{\bsi}, \hat{\phi}_{\bsi'}, \hat{\phi}_{\bsi''}\rrangle^{\cX,\bT}_{0,3}\in \bSTQ,
$$
where $\tQ^d = Q^d \exp(\sum_{a=1}^{\fp'}\tau_a \langle T_a, d\rangle)$. Let $S:= \bSTQ$. Given
$a,b\in H_{\CR,\bT}^*(\cX;\bST)$, define the \emph{quantum product}
$$
a\star_t b:= \sum_{\bsi\in I_\Si} \llangle a, b, \hat{\phi}_\bsi\rrangle \hat{\phi}_\bsi
\in H_{\CR,\bT}^*(\cX;\bST)\otimes_{\bST} S.
$$
Then $A:= H_{\CR,\bT}^*(\cX;\bST)\otimes_{\bST}S$ is a free $S$-module of rank $\chi$, and
$(A,*_t)$ is a commutative, associative algebra over $S$.  Let $I\subset S$
be the ideal generated by $\tQ,\tau''$, and define
$$
S_n:= S/I^n,\quad A_n:= A\otimes_{S}S_n
$$
for $n\in \bZ_{\geq 0}$. Then $A_n$ is a free $S_n$-module of rank $\chi$, and the ring structure
$*_t$ on $A$ induces a ring structure $*_{\underline{n}}$ on $A_n$. In particular,
$$
S_1=\bST,\quad A_1=H^*_{\CR,\bT}(\cX;\bST),
$$
and $*_{\underline{1}} = *_\cX$ is the orbifold cup product. So
$$
\{ \phi_{\bsi}^{(1)}:=\phi_{\bsi}: \bsi\in I_\Si\}
$$
is an idempotent basis of $(A_1,\star_{\underline{1}})$.
For $n\geq 1$, let  $\{ \phi_{\bsi}^{(n+1)}:\bsi\in I_\Si\}$
be the unique idempotent basis of $(A_{n+1},\star_{\underline{n+1}})$
which is the lift of the idempotent basis $\{ \phi_{\bsi}^{(n)}:\bsi\in I_\Si\}$
of $(A_n,\star_{\underline{n}})$ \cite[Lemma 16]{LP}. Then
$$
\{ \phi_{\bsi}(t):=\lim \phi_{\bsi}^{(n)}: \bsi\in I_\Si\}
$$
is an idempotent basis of $(A,\star_t)$. The ring $(A,\star_t)$ is called the \emph{equivariant big quantum cohomology ring}, which is also denoted by $QH^*_{\bT}(\cX)$.

Set
$$
\novT:= \bST\otimes_{\bC}\nov =\bST[\![ E(\cX)]\!].
$$
Then $H:=H^*_{\CR,\bT}(\cX;\novT)$ is a free $\novT$-module of rank $\chi$.
Any point $t\in H$ can be written as  $t=\sum_{\bsi\in I_\Si}t^{\bsi} \hat{\phi}_{\bsi}$
. We have
$$
H=\mathrm{Spec}(\novT [ t^{\bsi}:\bsi\in I_\Si]).
$$
Let $\hat{H}$ be the formal completion of $H$ along the origin:
$$
\hat{H} :=\mathrm{Spec}(\novT[\![ t^{\bsi}:\bsi\in I_\Si ]\!]).
$$
Let $\cO_{\hat{H}}$ be the structure sheaf on $\hat{H}$, and let $\cT_{\hat{H}}$ be the tangent sheaf on
$\hat{H}$.
Then $\cT_{\hat{H}}$ is a sheaf of free $\cO_{\hat{H}}$-modules of rank $\chi$.
Given an open set in $\hat{H}$,
$$
\cT_{\hat{H}}(U)  \cong \bigoplus_{\bsi\in I_\Si}\cO_{\hat{H}}(U) \frac{\partial}{\partial t^{\bsi}}.
$$
The big quantum product and the $\bT$-equivariant Poincar\'{e} pairing defines the structure of a formal
Frobenius manifold on $\hat{H}$:
$$
\frac{\partial}{\partial t^{\bsi}} \star_t \frac{\partial}{\partial t^{\bsi'}}
=\sum_{\brho\in I_\Si} \llangle \hat{\phi}_{\bsi},\hat{\phi}_{\bsi'},\hat{\phi}_{\brho}\rrangle_{0,3}^{\cX,\bT}
\frac{\partial}{\partial t^{\brho}}
\in \Gamma(\hat{H}, \cT_{\hat{H}}).
$$
$$
( \frac{\partial}{\partial t^{\bsi}},\frac{\partial}{\partial t^{\bsi'}})_{\cX,\bT} =\delta_{\bsi,\bsi'}.
$$
The length of the canonical basis in the equivariant Chen-Ruan cohomology and the equivariant quantum cohomology are denoted as
\[
(\phi_\bsi,\phi_\bsi)_{\cX,\bT}=\frac{1}{\Delta^\bsi},\quad (\phi_\bsi(t),\phi_\bsi(t))_{\cX,\bT}=\frac{1}{\Delta^\bsi(t)}.
\]

By replacing $\bT$ by $\bT'$ in the above discussion, we obtain the equivariant big quantum cohomology ring $QH^*_{\bT'}(\cX)$.

\subsection{The B-model Frobenius structure: the Jacobian ring}

We can define a Frobenius algebra for each $q$:
\[
\mathrm {Jac}(W^{\bT'})=\frac{\bC[X^\pm,Y^\pm,Z^\pm]}{\langle \frac{\partial W^{\bT'}}{\partial X}, \frac{\partial W^{\bT'}}{\partial Y}, \frac{\partial W^{\bT'}}{\partial Z} \rangle }.
\]
The ring structure is self-evident in the definition, while the metric is
\[
(f,g)=\frac{1}{(2\pi\sqrt{-1})^3}\int_{|dW^{\bT'}=\epsilon|}\frac{fgdx\wedge dy\wedge dz}{\frac{\partial W^{\bT'}}{\partial x}\frac{\partial W^{\bT'}}{\partial y}\frac{\partial W^{\bT'}}{\partial z}}=\sum_{\balpha} \frac{f(P_\balpha)g(P_\balpha)}{\det\Hess_{P_\balpha}(W^{\bT'})}.
\]
In this expression $P_\balpha$ runs through all critical points of $W^{\bT'}$. An element in $\mathrm{Jac}(W^{\bT'})$ is a Laurent polynomial in $\bC[X^\pm,Y^\pm,Z^\pm]$ while two Laurent polynomials are identified if their values on critical points of $W^{\bT'}$ are the same. The canonical basis of $\mathrm {Jac}(W^{\bT'})$ is a function taking value $1$ on one critical point of $W^{\bT'}$ while vanishing at all other critical points.

Notice the current set-up does not constitute a Frobenius \emph{manifold} -- we just have a family of Frobenius algebras when varying $q$.

\subsection{Genus $0$ mirror symmetry for toric Calabi-Yau 3-orbifolds: identification of $I-$function and $J$-function}
\label{sec:genus-0-mirror-theorem}
Given the choice of $H_1,\dots, H_\fp$ (and the lift $\bar H_1^\bT,\dots, \bar H_{\fp'}^\bT\in H^2_{\bT}(\cX)\subset H^2_{\CR,\bT}(\cX)$), for any $\beta\in \bK_\eff$, we define
\[
q^\beta=\prod_{a=1}^\fp q_a^{\langle H_a,\beta\rangle}.
\]
It is a monomial of $q_1,\dots,q_\fp$. We define $\bT$-equivariant small $I$-function as follows. Recall that $q_K=(q_1,\dots,q_{\fp'})$ for the K\"ahler part, while $q_\orb=(q_{\fp'+1},\dots,q_{\fp})$ for the twisted sector.
\begin{definition}\label{def-I}
\begin{eqnarray*}
I_{\bT}(t_0,q,z) &=& e^{ (\sum_{a=1}^{\fp'} \bar H_a^\bT \log q_a)/z}
\sum_{\beta\in \bK_\eff} q^\beta \prod_{i=1}^{3+\fp'}
\frac{\prod_{m=\lceil \langle D_i, \beta \rangle \rceil}^\infty
(\bar{D}^\bT_i  +(\langle D_i,\beta\rangle -m)z)}{\prod_{m=0}^\infty(\bar{D}^\bT_i+(\langle D_i,\beta\rangle -m)z)} \\
&& \quad \cdot \prod_{i=4+\fp'}^{3+\fp} \frac{\prod_{m=\lceil \langle D_i, \beta \rangle \rceil}^\infty (\langle D_i,\beta\rangle -m)z}
{\prod_{m=0}^\infty(\langle D_i,\beta\rangle -m)z} \one_{v(\beta)}.
\end{eqnarray*}
\end{definition}

\begin{definition}
  The equivariant $J$-function is
  \[
  J_{\bT}(\btau,z)=\sum_{\bsi\in I_{\Si}}\llangle  \frac{\phi_\bsi}{z-\psi_1},1\rrangle_{0,2}^\cX{\phi^\bsi}.
  \]
\end{definition}

The main result in \cite{CCIT} implies the following $\bT$-equivariant
mirror theorem:
\begin{theorem}[Coates-Corti-Iritani-Tseng]\label{thm:IJ}
$$
J_{\bT}(\btau,z)\vert_{Q=1} = I_{\bT}(t_0,q, z),
$$
where the equivariant closed mirror map  $q\mapsto  \btau(q)\in H^2_{\CR,
\bT}(\cX)$
is determined by the first-order term in the asymptotic expansion of the $I$-function
$$
I(t_0,q,z)=1+\frac{ \btau(q)}{z}+o(z^{-1}).
$$
More explicitly, the equivariant closed mirror map is given by
$$
\btau =\tau_0(q) + \sum_{a=1}^{\fp'}\tau_a(q) \bar{H}^\bT_a +\sum_{a=\fp'+1}^\fp \tau_a(q) \one_{b_{a+3}},
$$
where each $\tau_a(q)$ can be obtained readily from the expansion of the $I$-function. They have the following asymptotic behavior:
\begin{eqnarray}
\label{eqn:closed-mirror-map}
\tau_0(q) &=& O(q)\in \sum_{i=1}^3 \bC \su_i, \nonumber\\
\tau_a(q) &=& \begin{cases}
\log(q_a)+ O(q)
, & 1\leq a\leq \fp',\\
q_a+\text{higher order terms}, & \fp'+1\leq a\leq \fp.
\end{cases}
\end{eqnarray}
\end{theorem}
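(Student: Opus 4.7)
The plan is to deduce the theorem from the toric orbifold mirror theorem of \cite{CCIT}, expressed in Givental's symplectic formalism. Introduce the loop space
\[
\cH := H^*_{\CR,\bT}(\cX;\bST)(\!(z^{-1})\!)
\]
together with its standard symplectic pairing, and let $\cL_\cX\subset\cH$ denote the Givental Lagrangian cone encoding all $\bT$-equivariant genus-zero descendant Gromov-Witten invariants of $\cX$. The $J$-function $J_\bT(\btau,z)|_{Q=1}$ parametrizes a canonical slice of $\cL_\cX$, and is characterized among such slices by its leading asymptotic behavior in $z$. Consequently the theorem splits into two tasks: (a) show that $I_\bT(t_0,q,z)$ lies on $\cL_\cX$ (up to the usual normalization), and (b) read off the closed mirror map from its $z^{-1}$-coefficient.

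For step (a), the method is that of CCIT. Realizing $\cX$ as the stacky GIT quotient $[(\bC^{3+\fp'}\setminus Z_\Si)/G_\Si]$, one constructs a moduli space of ``toric (quasi)maps'' whose virtual class, evaluated by $\bT$-equivariant Atiyah-Bott localization, produces exactly the hypergeometric product appearing in Definition~\ref{def-I}:
\[
\prod_i\frac{\prod_{m=\lceil\langle D_i,\beta\rangle\rceil}^\infty(\bar D_i^\bT+(\langle D_i,\beta\rangle-m)z)}{\prod_{m=0}^\infty(\bar D_i^\bT+(\langle D_i,\beta\rangle-m)z)}\,\one_{v(\beta)}.
\]
Here the numerator/denominator ratios arise as equivariant Euler classes of virtual normal bundles to $\bT$-fixed loci, while the insertion $\one_{v(\beta)}$ records the twisted sector determined by the age of $v(\beta)\in\BSi$. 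A Birkhoff-type uniqueness argument, combined with the $\nabla$-flatness of sections of $\cL_\cX$ and the recursive structure of descendant invariants, then upgrades this fixed-point calculation to the global statement that the full series $I_\bT$ sits on $\cL_\cX$.

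For step (b), expand in $z^{-1}$. The prefactor $\exp\bigl((\sum_{a=1}^{\fp'}\bar H_a^\bT\log q_a)/z\bigr)$ contributes $\sum_{a=1}^{\fp'}\bar H_a^\bT\log q_a$ to the $z^{-1}$-term, producing the logarithmic leading behavior $\tau_a(q)=\log q_a+O(q)$ for $1\leq a\leq \fp'$ in \eqref{eqn:closed-mirror-map}. In the orbifold range $\fp'+1\leq a\leq \fp$, the class $\one_{b_{a+3}}$ appears precisely when $v(\beta)=b_{a+3}$; the smallest such $\beta$ satisfies $\langle H_a,\beta\rangle=1$ and $q^\beta=q_a$, and a single ``broken'' hypergeometric factor of order $z^{-1}$ in the corresponding product yields $\tau_a(q)=q_a+\cdots$. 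The equivariant degree-zero piece collects into $\tau_0(q)\in\bigoplus_{i=1}^3\bC\su_i$. Comparing with the characterization of $J_\bT$ recovers \eqref{eqn:closed-mirror-map} and completes the identification.

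The main obstacle is step (a) in this precise setup: one must handle non-compact $\bT$-fixed loci by equivariant residues, construct a moduli of twisted toric stable maps compatible with the extended rays $\rho_{\fp'+4},\dots,\rho_{\fp+3}$ responsible for orbifold directions in the extended K\"ahler cone, and verify that the age-shifted hypergeometric modifications genuinely assemble into points of the Givental cone rather than of some twisted variant. This is exactly the content of the CCIT toric mirror theorem, whose invocation in the present $\bT$-equivariant Calabi-Yau $3$-orbifold context delivers both the identification $J_\bT(\btau,z)|_{Q=1}=I_\bT(t_0,q,z)$ and the asymptotic form \eqref{eqn:closed-mirror-map}.
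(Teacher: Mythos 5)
The paper does not prove Theorem~\ref{thm:IJ}; it is stated as a consequence of the main result of \cite{CCIT} (``The main result in \cite{CCIT} implies the following $\bT$-equivariant mirror theorem''), with no argument supplied beyond the citation. So there is no internal proof in the paper to compare against; the correct ``paper-level'' justification is simply to apply CCIT to the semi-projective toric DM stack $\cX$ and read off the mirror map from the $z^{-1}$-expansion of $I_\bT$, which is exactly your step (b), and which you carry out correctly (the logarithmic leading terms $\log q_a$ from the prefactor for $1\le a\le\fp'$, the linear leading terms $q_a$ from the lowest $\beta$ with $v(\beta)=b_{a+3}$ for $\fp'+1\le a\le\fp$, and the pure equivariant-parameter piece $\tau_0(q)$).

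Your step (a) goes further than the paper and sketches an internal proof of the CCIT theorem itself. The high-level framing (Givental's symplectic loop space, the Lagrangian cone $\cL_\cX$, showing $I_\bT$ lies on the cone, uniqueness of the small slice with given $z^{-1}$-asymptotics) is correct and matches CCIT's framework. One inaccuracy worth flagging: the specific mechanism you describe — a moduli space of ``toric (quasi)maps'' whose localization produces the hypergeometric factors — is closer in spirit to the Ciocan-Fontanine--Kim quasimap approach than to CCIT's actual argument, which instead characterizes points of the Lagrangian cone for toric stacks by $C^*$-fixed-point recursion relations plus a polynomiality condition (in the style of Givental and Brown, extended to stacks) and verifies these directly for the hypergeometric $I$-function. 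Both routes are valid ways to establish the toric mirror theorem, but since you attribute your mechanism to CCIT you should either cite the quasimap literature for that construction or replace it with the recursion/polynomiality characterization that CCIT actually use. This does not affect the logical validity of invoking the theorem — which, as noted, is all the paper itself does — but it would matter if you intended the sketch as a faithful account of the cited proof.
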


It does make sense to set $Q=1$ -- the Nokikov variable could eventually be dropped, since in principle by the divisor equation $e^{\tau}$ carries the same information as $Q$. See Remark \ref{Novikov} for a precise argument.

Under this mirror map, the B-model large radius/orbifold mixed-type
limit $q\to 0$ corresponds to the A-model large radius/orbifold
mixed type limit $\tau'\to -\infty, \tau''\to 0$.

The mirror maps in Equation \eqref{eqn:closed-mirror-map} and the period integrals in Proposition \ref{prop:A-integral} are solutions to certain GKZ system with prescribed asymptotic behavior. They have same leading order, and this ensures that they are equal. The closed mirror map $\tau_i$ is the period integrals along $A_i$ cycle.
\begin{proposition}
  In the mirror map Equation \eqref{eqn:closed-mirror-map}
  \[
  \tau_a=\frac{1}{2\pi\sqrt{-1}}\int_{A_a}\Phi \mod \const.
  \]
\end{proposition}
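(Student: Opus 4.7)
The plan is to identify both sides as solutions of the same GKZ hypergeometric (Picard-Fuchs) system on a neighborhood of $q=0$ in $\fB^\circ$ with matching leading-order asymptotics, and conclude by a uniqueness statement. First I would verify that the multi-valued functions $q \mapsto \frac{1}{2\pi\sqrt{-1}}\int_{A_a}\Phi$ are annihilated by the GKZ system associated to the toric data of $\cX$. Although $\Phi = \hat y\,d\hat x$ is not single-valued because $\hat y$ has logarithmic monodromy, its $q$-derivatives $q_b\,\partial_{q_b}\Phi$ descend to honest meromorphic $1$-forms on the family of mirror curves $\fC \to \fB^\circ$; integrating these over the flat cycles $A_a$ of Proposition \ref{prop:A-integral} produces Picard-Fuchs solutions by the standard Griffiths transversality / residue argument for affine mirror curves of toric Calabi-Yau $3$-folds.

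Second, each component $\tau_a(q)$ of the closed mirror map is itself a GKZ solution. By Definition \ref{def-I}, the $I$-function is a $\Gamma$-series of GKZ type, so every coefficient in its $z$-expansion is killed by the same box operators, and $\tau_a(q)$ is precisely extracted from the $z^{-1}$-term by Theorem \ref{thm:IJ}.

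Third, I would match leading orders. Proposition \ref{prop:A-integral} gives
\[
\frac{1}{2\pi\sqrt{-1}}\int_{A_a}\Phi =
\begin{cases}
\log q_a + O(q_K) + O(|q_\orb|^2), & a \le \fp',\\
q_a + O(q_K) + O(|q_\orb|^2), & a > \fp',
\end{cases}
\]
while \eqref{eqn:closed-mirror-map} assigns identical leading behavior to $\tau_a(q)$. A standard uniqueness argument for solutions of the GKZ system at the large radius / orbifold point then forces their difference to be a holomorphic constant in each direction: in the $\fp'$ K\"ahler directions the logarithmic solutions are determined, modulo the trivial solution $1$, by their $\log q_a$ leading term; in the $\fp-\fp'$ twisted directions the honest power-series solutions are determined by their linear $q_a$ term. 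This is exactly the statement ``mod $\const$''.

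The main obstacle is the uniqueness step in the twisted directions: one has to check that no spurious logarithmic solutions in $q_\orb$ appear in the GKZ kernel, so that matching the linear coefficient really suffices. This rests on the semi-projectivity of $\cX$ together with the specific fractional exponents encoded in Definition \ref{def-I}. Once this is in place, the whole identity reduces to the asymptotic comparison supplied by Proposition \ref{prop:A-integral} and Theorem \ref{thm:IJ}.
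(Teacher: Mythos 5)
Your proposal follows essentially the same approach as the paper, which disposes of this proposition in a single sentence preceding it: both the period integrals and the mirror-map components are solutions of the GKZ system with the same prescribed leading-order asymptotics, hence coincide modulo constants. Your write-up merely spells out the intermediate steps (passing to $q$-derivatives of $\Phi$ to get honest Picard-Fuchs solutions, extracting $\tau_a$ from the $\Gamma$-series $I$-function, and the uniqueness argument at the mixed large-radius/orbifold point) in more detail than the paper bothers to, and correctly flags where the uniqueness step requires care in the twisted directions.
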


\subsection{Genus $0$ mirror symmetry for toric Calabi-Yau 3-orbifolds: identification of Frobenius algebras}

\label{sec:identification-frobenius}

 The genus $0$ mirror theorem of \cite{CCIT} implies the following under the closed mirror map $\btau=\btau(q)$
\begin{equation}
  \label{eqn:identification-frobenius}
\mathrm{Jac}(W^{\bT'})\cong QH^*_{\bT'}(\cX)\vert_{Q=1}.
\end{equation}
This statement should be understood a pointwise isomorphism of Frobenius algebras.\footnote{A reasonably enlarged B-model with more parameters than $q$ should produce a Frobenius manifold isomorphic to $QH^*_{\bT'}$. However we only need pointwise isomorphism of Frobenius algebras for the purpose of proving the Remodeling Conjecture.} This statement identifies $\mathrm{Jac}(W^{\bT'})$ with a \emph{slice} of $QH^*_{\bT'}(\cX)$. From this fact, there is an identification of canonical basis for $\mathrm{Jac}(W^{\bT'})$ and $QH^*_{\bT'}(\cX)$ when $\btau=\btau(q)$. So the index set of the canonical basis $I_\Si$ is identified with the index set of critical points of $W^{\bT'}$, which we also denote by $I_\Si$. We use bold greek letters like $\balpha$ and $\bsi$ to denote such an index. The dimension $N=\dim QH_{\bT'}^*(\cX)$ and  \emph{it is also the number of critical points of $W^{\bT'}$.}

Each critical point $P_\balpha$ of $W^{\bT'}$ in $(\bC^*)^3$ (although $W^{\bT'}$ is defined on the universal cover of $(\bC^*)^3$, $dW^{\bT'}$ is well-defined on $(\bC^*)^3$), by direct calculation, is characterized by the following
\begin{align*}
&\text{$P_\balpha=(X_\balpha,Y_\balpha,Z_\balpha(X_\balpha,Y_\balpha))$ is a critical point of $W^{\bT'}$}\iff \\
&\text{$p_\balpha=(X_\balpha, Y_\balpha)$ is a critical point of $d\hat x$ on the mirror curve $C_q$.}
\end{align*}
Let's denote the canonical basis $V_\balpha\in \Jac(W^{\bT'})$ to be the function taking value $1$ on $P_\balpha$ and $0$ on $P_\bbeta,\bbeta\neq \balpha$. In particular, $W^{\bT'}(P_\balpha)=\hat x(p_\balpha)$, i.e. the \emph{canonical coordinate} for each $V_\balpha$ is the \emph{critical value} of $W^{\bT'}$, which is also the branch value of $\hat x_{0,\balpha}:=\hat x(p_\balpha)$. A straightforward calculation shows
\begin{itemize}
  \item One sets $\hat x = \hat x_{0,\balpha}+\zeta_\alpha^2$. In the expansion $\hat y = \hat y_{0,\balpha} +\sum_{d=1}^\infty h^\balpha_d \zeta_\alpha^d$, $$h_1^\balpha=\sqrt{\frac{2}{\frac{d^2 \hat x}{d \hat y^2}}};$$
  \item $\det \Hess_{P_\balpha}(W^{\bT'})=-\frac{d^2 \hat x}{d\hat y^2}(p_\balpha);$
  \item The squared norm of the canonical basis $$(V_\balpha,V_\balpha)= \frac{1}{\det\Hess_{P_\balpha}(W^{\bT'})}.$$
\end{itemize}
Since we denote $(\hat \phi_\balpha(\btau),\hat\phi_\balpha(\btau))=\frac{1}{\Delta^\balpha(\btau)}$, by the isomorphism of Frobenius algebra under the closed mirror map $\btau=\btau(q)$, we conclude that
\begin{equation}
\frac{h^\balpha_1}{\sqrt{-2}}=\frac{1}{\sqrt{\Delta^\balpha(\btau)}}.
\label{eqn:length-matching}
\end{equation}
We will see in Section \ref{sec:vertices} this identifies the vertex terms of A and B-model graph sum formulae (Theorem \ref{thm:Zong} and \ref{thm:DOSS}).

\section{A-model quantization: the orbifold Givental formula}

\label{sec:A-model-quantization}

\subsection{The equivariant big quantum differential equation}
We consider the Dubrovin connection $\nabla^z$, which is a family
of connections parametrized by $z\in \bC\cup \{\infty\}$, on the tangent bundle
$T_{\hat{H}}$ of the formal Frobenius manifold $\hat{H}$:
$$
\nabla^z_{\bsi}=\frac{\partial}{\partial t^{\bsi}} -\frac{1}{z} \hat{\phi}_{\bsi}\star_t
$$
The commutativity (resp. associativity)
of $*_t$ implies that $\nabla^z$ is a torsion
free (resp. flat) connection on $T_{\hat{H}}$ for all $z$. The equation
\begin{equation}\label{eqn:qde}
\nabla^z \mu=0
\end{equation}
for a section $\mu\in \Gamma(\hat{H},\cT_{\hat{H}})$ is called the {\em $\bT$-equivariant
big quantum differential equation} ($\bT$-equivariant big QDE). Let
$$
\cT_{\hat{H}}^{f,z}\subset \cT_{\hat{H}}
$$
be the subsheaf of flat sections with respect to the connection $\nabla^z$.
For each $z$, $\cT_{\hat{H}}^{f,z}$ is a sheaf of
$\novT$-modules of rank $\chi$.

A section $L\in \End(T_{\hat{H}})=\Gamma(\hat{H},\cT_{\hat{H}}^*\otimes\cT_{\hat{H}})$
defines an $\cO_{\hat{H}}(\hat{H})$-linear map
$$
L: \Gamma(\hat{H},\cT_{\hat{H}})= \bigoplus_{\bsi\in I_{\Si}} \cO_{\hat{H}}(\hat{H})
\frac{\partial}{\partial t^{\bsi}}
\to \Gamma(\hat{H},\cT_{\hat{H}})
$$
from the free $\cO_{\hat{H}}(\hat{H})$-module $\Gamma(\hat{H},\cT_{\hat{H}})$ to itself.
Let $L(z)\in \End(T_{\hat H})$ be a family of endomorphisms of the tangent bundle $T_{\hat{H}}$
parametrized by $z$. $L(z)$ is called a {\em fundamental solution} to the $\bT$-equivariant QDE if
the $\cO_{\hat{H}}(\hat{H})$-linear map
$$
L(z): \Gamma(\hat{H},\cT_{\hat{H}}) \to \Gamma(\hat{H},\cT_{\hat{H}})
$$
restricts to a $\novT$-linear isomorphism
$$
L(z): \Gamma(\hat{H},\cT_H^{f,\infty})=\bigoplus_{\bsi\in I_{\Si}} \novT \frac{\partial}{\partial t^{\bsi}}
\to \Gamma(\hat{H},\cT_H^{f,z}).
$$
between rank $\chi$ free $\novT$-modules.

\subsection{The $\cS$-operator}\label{sec:A-S}
The $\cS$-operator is defined as follows.
For any cohomology classes $a,b\in H_{\CR,\bT}^*(\cX;\bST)$,
$$
(a,\cS(b))_{\cX,\bT}=(a,b)_{\cX,\bT}
+\llangle a,\frac{b}{z-{\psi}}\rrangle^{\cX,\bT}_{0,2}
$$
where
$$
\frac{b}{z-{\psi}}=\sum_{i=0}^\infty b{\psi}^i z^{-i-1}.
$$
The $\cS$-operator can be viewed as an element in $\End(T_{\hat{H}})$ and is a fundamental solution to the $\bT$-equivariant
big QDE \eqref{eqn:qde}.  The proof for $\cS$ being a fundamental solution can be found in \cite{CK}
for the smooth case and in \cite{Ir09} for the orbifold case.

\begin{remark}
One may notice that since there is a formal variable $z$ in the definition of
the $\bT$-equivariant big QDE \eqref{eqn:qde}, one can consider its solution space over different rings. Here the operator
$\cS= \one+ \cS_1/z+ \cS_2/z^2+\cdots$ is viewed as a formal power series in $1/z$ with operator-valued coefficients.
\end{remark}

\begin{remark}\label{Novikov}
Given $t\in H^*_{\CR,\bT}(\cX)\otimes_\RT \bST$, let $t=t'+t''$ where $t'\in H^2_{\bT}(\cX)\otimes_\RT \bST$ and $t''$ is a linear combination of elements in $H^{\neq 2}_{\CR,\bT}(\cX)\otimes_\RT \bST$ and elements in degree 2 twisted sectors. Then by divisor equation, we have
$$
(a,b)_{\cX,\bT}+\llangle a,\frac{b}{z-{\psi}}\rrangle^{\cX,\bT}_{0,2}=(a,be^{t'/z})_{\cX,\bT}+
\sum_{m=0}^\infty \sum_{d\in E(\cX)\\(d,m)\neq(0,0)}\frac{Q^de^{\int_dt'}}{m!}\langle a,\frac{be^{t'/z}}{z-{\psi}},(t'')^m\rangle^{\cX,\bT}_{0,2+m,d}.
$$
In the above expression, if we fix the power of $z^{-1}$, then only finitely many terms in the expansion of $e^{t'/z}$ contribute. Therefore, the factor $e^{\int_dt'}$ can play the role of $Q^d$ and hence the restriction $\llangle a,\frac{b}{z-{\psi}}\rrangle^{\cX,\bT}_{0,2}|_{Q=1}$ is well-defined. So the operator $\cS|_{Q=1}$ is well-defined.
\end{remark}

We consider several different (flat) bases for $H_{\CR,\bT}^*(\cX;\bST)$:
\begin{enumerate}
\item The classical canonical basis $\{ \phi_{\bsi}:\bsi\in I_\Si \}$ defined in Section \ref{sec:equivariantCR}.
\item The basis dual to the classical canonical basis with respect to the $\bT$-equivariant Poincare pairing:
$\{ \phi^{\bsi} =\Delta^{\bsi} \phi_{\bsi}: \bsi \in I_\Si \} $, where $1/\Delta^\bsi=(\phi_\bsi,\phi_\bsi)$.
\item The classical normalized canonical basis
$\{ \hat{\phi}_{\bsi}=\sqrt{\Delta^{\bsi}}\phi_{\bsi} :\bsi\in I_\Si\}$ which is self-dual: $\{ \hat{\phi}^{\bsi}=\hat{\phi}_{\bsi}: \bsi \in I_\Si \}$.
\end{enumerate}

We also consider several different non-flat basis:
\begin{enumerate}
  \item The quantum canonical basis $\{ \phi_{\bsi}(t):\bsi\in I_\Si \}$, such that at LRL $\lim_{\tQ\to 0,t''\to 0}\phi_{\bsi}(t)=\phi_{\bsi}$.
  \item The basis dual to the qunatum canonical basis with respect to the $\bT$-equivariant Poincare pairing:
  $\{ \phi^{\bsi} (t) =\Delta^{\bsi}(t) \phi_{\bsi}(t): \bsi \in I_\Si \} $, where $(\phi_\bsi(t),\phi_\bsi(t))=1/\Delta^\bsi(t)$.
  \item The quantum normalized canonical basis
  $\{ \hat{\phi}_{\bsi}(t)=\sqrt{\Delta^{\bsi}(t)}\phi_{\bsi}(t) :\bsi\in I_\Si\}$ which is self-dual:  $\hat{\phi}^{\bsi}(t)=\hat{\phi}_{\bsi}(t): \bsi \in I_\Si$.
\end{enumerate}
The ``classical'' basis are flat while the ``quantum'' ones are not. For most of our application, we will set $Q=1$, and $t=\btau$ for an $H^{\leq 2}$ element.

For $\bsi, \bsi'\in I_\Si$, define
$$
S^{\bsi'}_{\spa \bsi}(z) := (\phi^{\bsi}, \cS(\phi_{\bsi})).
$$
Then $(S^{\bsi'}_{\spa \bsi}(z))$ is the matrix  of the $\cS$-operator with respect to the canonical basis
$\{\phi_{\bsi}:\bsi\in I_\Si \}$:
\begin{equation}\label{eqn:S}
\cS(\phi_{\bsi}) =\sum_{\bsi'\in I_\Si}
\phi_{\bsi'} S^{\bsi'}_{\spa \bsi}(z).
\end{equation}

For $\bsi,\bsi'\in I_\Si$, define
$$
S_{\bsi'}^{\spa \widehat{\bsi} }(z) := (\phi_{\bsi'}, \cS(\hat{\phi}^{\bsi})).
$$
Then $(S_{\bsi'}^{\spa  \widehat{\bsi}})$ is the matrix of the $\cS$-operator
with respect to the bases $\{\hat{\phi}^{\bsi}:\bsi\in I_\Si\}$ and
$\{\phi^{\bsi}: \bsi\in I_{\Si}\}$:
\begin{equation}\label{eqn:barS}
\cS(\hat{\phi}^{\bsi})=\sum_{\bsi'\in I_{\Si}} \phi^{\bsi'}
 S_{\bsi'}^{\spa \widehat{\bsi}}(z).
\end{equation}

Introduce
\begin{align*}
S_z(a,b)&=(a,\cS(b))_{\cX,\bT},\\
V_{z_1,z_2}(a,b)&=\frac{(a,b)_{\cX,\bT}}{z_1+z_2}+\llangle \frac{a}{z_1-\psi_1},
                  \frac{b}{z_2-\psi_2}\rrangle^{\cX,\bT}_{0,2}.
\end{align*}
A well-known WDVV-like argument says
\begin{equation}
\label{eqn:two-in-one}
V_{z_1,z_2}(a,b)=\frac{1}{z_1+z_2}\sum_i S_{z_1}(T_i,a)S_{z_2}(T^i,b),
\end{equation}
where $T_i$ is any basis of $H^*_{\CR,\bT}(\cX;\bST)$ and $T^i$ is its dual basis.
In particular,
$$
V_{z_1,z_2}(a,b)=\frac{1}{z_1+z_2}\sum_{\bsi\in I_\Si} S_{z_1}(\hat{\phi}_{\bsi},a)S_{z_2}(\hat{\phi}_{\bsi},b).
$$

\subsection{Quantization of quadratic Hamiltonians}
In this section, we review the basic concepts of the quantization of quadratic Hamiltonians (see \cite{Gi01'} for more details). The quantization procedure provides a way to recover the higher genus theory from the genus zero data which we will use in the next section.

\subsubsection{Symplectic space formalism}
So far, we have been working on (a formal neighborhood of) the space $H=\mathrm{Spec}(\novT [ t^{\bsi}:\bsi\in I_\Si])$ which provides us the Frobenius structure and state space of the corresponding Gromov-Witten theory. When we consider the descendent theory of $\cX$, however, additional parameters are needed. Let $\bt(\psi)=t_0+t_1\psi+t_2\psi^2+\cdots$ be a formal power series in $\psi$ with an integer index that keeps track in the power of $\psi$. Here each $t_a$ lies in $H^*_{\CR,\bT'}(\cX)$. We define
$$\langle \bt(\psi_1),\cdots,\bt(\psi_k)\rangle_{g,k,\beta}^{\cX,\bT}=
\int_{[\Mbar_{g,k}(\cX,\beta)^{\bT}]^\vir}
\frac{\prod_{j=1}^{k}(\sum_{a=0}^{\infty}(\ev_j^*t_a)\psi_j^a)}{e_{\bT}(N^{\vir})}.$$
The additional index $a$ leads to the study of the symplectic space formalism.

Let $z$ be a formal variable. We consider the space $\bH$ which is the space of Laurent polynomials in one variable $z$ with coefficients in $H$. We define the symplectic form $\Omega$ on $\bH$ by
$$\Omega(f,g)=\Res_{z=0}( f(-z),g(z)))_{\cX,\bT}dz$$
for any $f,g\in\bH$. Note that we have $\Omega(f,g)=-\Omega(g,f)$. There is a natural polarization $\bH=\bH_+\oplus \bH_-$ corresponding to the decomposition $f(z,z^{-1})=f_+(z)+f_-(z^{-1})z^{-1}$ of laurent polynomials into polynomial and polar parts. It is easy to see that $\bH_+$ and $\bH_-$ are both Lagrangian subspaces of $\bH$ with respect to $\Omega$.

Introduce a Darboux coordinate system $\{p^\bsi_a,q^\brho_b\}$ on $\bH$ with respect to the above polarization. This means that we write a general element $f\in\bH$ in the form
$$\sum_{a\geq 0,\bsi\in I_\Sigma}p^\bsi_a\hat{\phi}^\bsi(-z)^{-a-1}+\sum_{b\geq 0,\brho\in I_\Sigma}q^\brho_b\hat{\phi}_\brho z^b.$$
Denote
\begin{eqnarray*}
\textbf{p}(z):&=&p_0(-z)^{-1}+p_1(-z)^{-2}+\cdots\\
\textbf{q}(z):&=&q_0z+q_1z^2+\cdots,
\end{eqnarray*}
where $p_a=\sum_\bsi p^\bsi_a \hat{\phi}^\bsi$ and $q_b=\sum_\brho q^\brho_b\hat{\phi}_\brho$.

Recall that when we discussed the Gromov-Witten theory of $\cX$, we introduced the formal power series $\bt(z)=t_0+t_1z+t_2z^2+\cdots$. With $z$ replaced by $\psi$, $\bt$ appears as the insertion in the genus $g$ correlator. We relate $\bt(z)$ to the Darboux coordinates by introducing the \emph{dilaton shift}: $\textbf{q}(z)=\bt(z)-\one z$. The dilaton shift appears naturally in the quantization procedure. We will explain this phenomenon as a group action on Cohomological field theories in the next section.

\subsubsection{Quantization of quadratic Hamiltonians}
Let $A:\bH\to\bH$ be a linear infinitesimally symplectic transformation, i.e. $\Omega(Af,g)+\Omega(f,Ag)=0$ for any $f,g\in\bH$. Under the Darboux coordinates, the quadratic Hamiltonian
$$f\to\frac{1}{2}\Omega(Af,f)$$
is a series of homogeneous degree two monomials in $\{p^\bsi_a,q^\brho_b\}$. Let $\hbar$ be a formal variable and define the quantization of quadratic monomials as
$$\widehat{q^\bsi_aq_b^\brho}=\frac{q^\bsi_aq_b^\brho}{\hbar},\widehat{q^\bsi_ap_b^\brho}=
q^\bsi_a\frac{\partial}{\partial q^\brho_b},
  \widehat{p^\bsi_ap_b^\brho}=\hbar \frac{\partial}{\partial q^\bsi_a}\frac{\partial}{\partial q^\brho_b}.
$$
We define the quantization $\widehat{A}$ by extending the above equalities linearly. The differential operators $\widehat{q^\bsi_aq_b^\brho},\widehat{q^\bsi_ap_b^\brho},\widehat{p^\bsi_ap_b^\brho}$ act on the so called Fock space \emph{Fock} which is the space of formal functions in $\bt(z)\in\bH_+$. For example, the descendent potential and ancestor potential are regarded as elements in \emph{Fock}. The quantization operator $\widehat{A}$ does not act on \emph{Fock} in general since it may contain infinitely many monomials. However, the actions of quantization operators in our paper are well-defined. The quantization of a symplectic transform of the form $\exp(A)$, with $A$ infinitesimally symplectic, is defined to be $\exp(\widehat{A})=\sum_{n\geq 0}\frac{\widehat{A}^n}{n!}$.

\begin{remark}
Let $A:\bH\to\bH$ be a linear infinitesimally symplectic transformation. The quantization of $Az^m, m\geq 0$ is studied in \cite[equation (1.3)]{C03} and in \cite[Appendix C]{Ts10}. There is a sign error in the second term of  \cite[equation (1.3)]{C03} and the corresponding identity in \cite[Appendix C]{Ts10}. The sign $(-1)^k$ should be replaced by $(-1)^{k+m-1}$.

\end{remark}

\subsection{Givental's formula}
Let $U$ denote the diagonal matrix whose diagonal entries are the canonical coordinates. So $u^\bsi$ are canonical coordinates for each $\bsi\in I_\Si$.
The results in \cite{Gi01} and \cite{Zo} imply the following statement.
\begin{theorem}\label{R-matrix}
There exists a unique matrix power series $R(z)= \one + R_1z+R_2 z^2+\cdots$
satisfying the following properties.
\begin{enumerate}
\item The entries of $R_d$ lie in $\bSTQ$.
\item $\tS=\Psi R(z) e^{U/z}$  is a fundamental solution to the $\bT$-equivariant
big QDE \eqref{eqn:qde}.
\item $R$ satisfies the unitary condition $R^T(-z)R(z)=\one$.
\item
\begin{equation}\label{eqn:R-at-zero}
\begin{aligned}
& \lim_{\tQ,\tau''\to 0} R_{\rho,\delta}^{\spa\si,\gamma}(z)\\
=& \frac{\delta_{\rho,\si}}{|G_\si|}\sum_{h\in G_\si}\chi_\rho(h) \chi_\gamma(h^{-1})
\prod_{i=1}^3 \exp\Big( \sum_{m=1}^\infty \frac{(-1)^m}{m(m+1)}B_{m+1}(c^\si_i(h))
(\frac{z}{\w_i(\si)})^m \Big)
\end{aligned}
\end{equation}
\end{enumerate}
\end{theorem}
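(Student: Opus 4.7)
The plan is to exploit the semisimplicity of the equivariant big quantum cohomology to run Givental's asymptotic analysis of the QDE, and then determine the initial value at the large-radius/large-orbifold limit via Tseng's orbifold quantum Riemann--Roch.

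First I would work in the semisimple regime. After extending scalars to $\bST$, the Frobenius algebra $(A,\star_t)$ has the quantum idempotent basis $\{\phi_\bsi(t)\}$ and canonical coordinates $u^\bsi$ with $\partial/\partial u^\bsi = \phi_\bsi(t)$. The transition matrix $\Psi$ from the flat basis to the normalized idempotents $\hat\phi_\bsi(t)=\sqrt{\Delta^\bsi(t)}\,\phi_\bsi(t)$ diagonalizes the quantum product: $\Psi^{-1}(\hat\phi_\bsi\star_t)\Psi = U$ where $U=\mathrm{diag}(u^\bsi)$. Substituting the ansatz $\tS=\Psi R(z) e^{U/z}$ into $\nabla^z\tS=0$ reduces the QDE to a hierarchy of recursive relations on the coefficients $R_d$: the $z^d$ term determines the off-diagonal part of $R_d$ algebraically, and the diagonal part is fixed by requiring $R_d\in \bSTQ$ with no monodromy around $z=0$. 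This is exactly Givental's construction in \cite{Gi01}; its validity for toric Calabi--Yau $3$-orbifolds (in fact all GKM orbifolds) is established in \cite{Zo}. Uniqueness of $R$ follows because the only ambiguity is a diagonal left multiplication by a $z$-power series commuting with $e^{U/z}$, and the normalization $R(0)=\one$ plus the recursion fixes it.

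Next I would verify the unitarity $R^T(-z)R(z)=\one$. Flat sections of $\nabla^z$ and $\nabla^{-z}$ pair to constants under the Poincar\'e pairing: if $\nabla^z \mu = \nabla^{-z}\nu=0$ then $(\mu,\nu)_{\cX,\bT}$ is independent of $t$. Applied to $\tS=\Psi R(z)e^{U/z}$ and to $\tS(-z)$ (and using that the normalized idempotents are self-dual), this identity forces $R(z)^T R(-z) = \one$ since the $e^{U/z}e^{-U/z}=\one$ factor trivializes. Combined with $R(0)=\one$ this is the stated unitarity.

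The main work is the initial condition (4) at $\tQ=0,\tau''=0$. At this limit the quantum product degenerates to the orbifold cup product, and
$H^*_{\CR,\bT}(\cX;\bST)=\bigoplus_{\si\in\Si(3)} H^*_{\CR,\bT}(\cB G_\si;\bST)$, with the canonical basis $\phi_{(\si,\gamma)}=\phi_\gamma$ supported at the fixed stacky point $\fl_\si=[\mathrm{pt}/G_\si]$. Thus the $R$-matrix at this limit is block-diagonal and the $\si$-block computes the ``stationary phase'' $R$ of a single affine chart $\cX_\si=[\bC^3/G_\si]$. For such a chart, the relevant fundamental solution is the equivariant $J$/$S$-function of $\cB G_\si$ twisted by the tangent bundle $T_\si\cX=\bigoplus_{i=1}^3 \bC_{\sw_i(\si)}$, which by Tseng's orbifold quantum Riemann--Roch \cite{Ts10} is an explicit product of Gamma-factors over the eigenspaces. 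Expanding $\log\Gamma$ through the classical Bernoulli asymptotic expansion and picking out the age-$c^\si_i(h)$ component from each character gives exactly
\[
\exp\!\Big(\sum_{m=1}^\infty \frac{(-1)^m}{m(m+1)} B_{m+1}\!\bigl(c^\si_i(h)\bigr)\bigl(\tfrac{z}{\sw_i(\si)}\bigr)^m\Big),
\]
and averaging over $h\in G_\si$ against $\chi_\rho(h)\chi_\gamma(h^{-1})/|G_\si|$ converts from the $\one_h$-basis to the $(\rho,\gamma)$-basis, yielding formula \eqref{eqn:R-at-zero}.

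The main obstacle is really this last step: one must check that Tseng's twisted $J$-function formalism applies uniformly across all stacky charts $\cX_\si=[\bC^3/G_\si]$, identify the correct normalization of the $R$-matrix in Givental's conventions versus the twisted $J$-function conventions, and reconcile the age shifts with the Bernoulli asymptotics so that the signs and argument shifts match \eqref{eqn:R-at-zero} exactly. Once that matching is done, the four properties are established and uniqueness forces $R$ to be the claimed power series.
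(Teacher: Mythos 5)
Your overall strategy matches what the paper itself relies on: Theorem~\ref{R-matrix} is stated in the text as a consequence of Givental's semisimple theory \cite{Gi01} and its extension to GKM orbifolds \cite{Zo}, with the explicit initial value \eqref{eqn:R-at-zero} coming from Tseng's orbifold quantum Riemann--Roch \cite{Ts10} applied chart-by-chart at the large radius/large orbifold limit. Your reduction to the affine charts $\cX_\si = [\bC^3/G_\si]$, the block-diagonality forced by $\delta_{\rho,\si}$, and the character-averaging to convert from the $\one_h$-basis to the $(\si,\gamma)$-basis are all the right steps. The unitarity argument via pairing $\nabla^z$- and $\nabla^{-z}$-flat sections under the $\bT$-equivariant Poincar\'e pairing and using the self-duality of the normalized canonical basis is also the standard route.

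There is, however, a genuine gap in your uniqueness argument. You write that ``the diagonal part is fixed by requiring $R_d\in \bSTQ$ with no monodromy around $z=0$'' and that ``the normalization $R(0)=\one$ plus the recursion fixes'' the ambiguity. Neither claim holds: $R(z)$ is a formal power series in $z$, so ``monodromy around $z=0$'' is not a meaningful constraint here, and the recursion extracted from $\nabla^z\tS=0$ determines only the \emph{differential} of $\mathrm{diag}(R_d)$ in the canonical coordinates, leaving a $t$-independent constant in $\bST$ free at each order. Unitarity (property (3)) pins down the even-degree constants but gives no constraint in odd degrees; the residual freedom is exactly right-multiplication by a constant diagonal series $C(z)=\exp\!\big(\textstyle\sum_{k\ \mathrm{odd}} D_k z^k\big)$, which preserves (1)--(3). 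It is precisely property (4), the prescribed limit at $\tQ,\tau''\to 0$ from orbifold QRR, that kills these constants and yields uniqueness. So (4) should be presented not merely as a formula you verify after the fact, but as the initial condition that completes the construction and the uniqueness argument; otherwise the proof is incomplete as written.
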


Each matrix in (2) of Theorem \ref{R-matrix} represents an operator with respect to the classical canonical basis  $\{ \hat{\phi}_{\bsi}: \bsi\in I_\Si\}$.
So $R^T$ is the adjoint of $R$ with respect to the $\bT$-equivariant
Poincar\'{e} pairing $(\ , \ )_{\cX,\bT}$.
The matrix $(\tS_{\bsi'}^{\spa \widehat{\bsi}})(z)$ is of the form
\begin{equation}
  \label{eqn:a-model-S}
\tS_{\bsi'}^{\spa\widehat{\bsi}}(z)
= \sum_{\brho\in I_\Si} \Psi_{\bsi'}^{\spa \brho}
R_{\brho}^{\spa \bsi}(z) e^{u^{\bsi}/z}
=(\Psi R(z))_{\bsi'}^{\spa \bsi} e^{u^{\bsi}/z}
\end{equation}
where $R(z)= (R_{\brho}^{\spa\bsi}(z)) = \one + \sum_{k=1}^\infty R_k z^k$.

We call the unique $R(z)$ in Theorem \ref{R-matrix} the {\em A-model $R$-matrix}.
The A-model $R$-matrix plays a central role in the quantization formula of the descendent potential of $\bT$-equivariant Gromov-Witten
theory of $\cX$.

Let $\bar \psi_i$ be the pullback of the $i$-ith $\psi$-class on $\Mbar_{g,k}$ to $\Mbar_{g,k}(\cX;\beta)$. We define the ancestor potential to be
\[
\cA_\cX(\btau)=\sum_{g\geq 0}\sum_{k\ge 0}\frac{\hbar^{g-1}}{k!}\llangle\bt(\bar \psi_1),\dots,\bt(\bar \psi_k) \rrangle^\cX_{g,k}
\]

Before we move on to the quantization process, let us consider the potential functions of the trivial cohomological field theory $I$. Define the correlator $\langle \rangle_{g,k}^{I}$ to be
$$\langle \tau_{a_1}(\hat{\phi}_{\bsi_1}),\cdots,\tau_{a_k}(\hat{\phi}_{\bsi_k})\rangle_{g,k}^{I}=
\left\{\begin{array}{ll}\Delta_\bsi^{g-1+k/2}(\btau)\int_{\Mbar_{g,k}}\psi_1^{a_1}\cdots\psi_k^{a_k}, &\textrm{if}\quad \bsi_1=\bsi_2=\cdots=\bsi_k=\bsi,\\
0, &\textrm{otherwise}\end{array} \right.$$
where $a_1,\cdots,a_k$ are nonnegative integers. Let
$$\cD_{I}=\exp\big(\sum_{g\geq 0}\sum_{k\geq 0}\sum_{a_1,\cdots,a_k\geq 0}\sum_{\bsi_1,\cdots,\bsi_k}
\frac{\hbar^{g-1}t_{a_1}^{\bsi_1}\cdots t_{a_k}^{\bsi_k}}{a_1!\cdots a_k!}
\langle \tau_{a_1}(\hat{\phi}_{\bsi_1}),\cdots,\tau_{a_k}(\hat{\phi}_{\bsi_k})\rangle_{g,k}^{I}\big).$$

In \cite{Zo}, the third author generalizes Givental's formula for the total descendant
potential of equivariant Gromov-Witten theory of GKM manifolds to GKM orbifolds. When we apply this formula to the case of a toric Calabi-Yau 3-orbifold, we obtain the following theorem：

\begin{theorem}[{Zong \cite{Zo}}]\label{ancestor}
Let $\cA_\cX(\btau)$ be the ancestor potential of $\cX$. Then
$$\cA_\cX(\btau)=\widehat{\Psi}\widehat{R}\cD_{I}.$$
Here $\widehat{\Psi}$ is the operator $\cG(\Psi^{-1}\textbf{q})\mapsto \cG(\textbf{q})$ for any element $\cG$ in the Fock space.
\end{theorem}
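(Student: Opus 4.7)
The plan is to identify both sides of the claimed equality as elements of the same Fock space and show they satisfy the same uniqueness principle. The strategy follows Givental--Teleman: any semisimple equivariant CohFT is uniquely reconstructed from its genus zero Frobenius structure and its $R$-matrix via quantization of the corresponding symplectic transformation, acting on the trivial theory at each basic idempotent. By Section~\ref{sec:equivariantCR}, the $\bT$-equivariant quantum cohomology $QH_{\bT}^*(\cX)$ becomes semisimple over $\bSTQ$, with $\bT$-fixed loci being isolated stacky points $\fl_\si = [\mathrm{pt}/G_\si]$; this provides the canonical coordinates $u^\bsi$ and the orthonormal idempotent basis $\hat{\phi}_{\bsi}(t)$ needed to define the operators $\widehat{\Psi}$ and $\widehat{R}$.

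First, I would apply $\bT$-equivariant virtual localization to $\Mbar_{g,k}(\cX,\beta)^{\bT}$. Since $\cX$ is GKM, the fixed loci are indexed by decorated bipartite graphs whose vertices correspond to $\bT$-fixed stacky points $\fl_\si$ and whose edges correspond to one-dimensional $\bT$-invariant orbi-orbits; in the orbifold setting, vertices carry $G_\si$-representations and edges carry monodromy data compatible with the gerbe structure. Each decorated graph factors its contribution as a product of \emph{vertex contributions} --- integrals over moduli of stacky curves mapping to a single fixed point $\fl_\si$, which compute precisely the trivial CohFT $\cD_I$ with length factor $\Delta^\bsi(\btau)$ --- and \emph{edge contributions} that encode the propagators of the symplectic quantization, reorganized into the matrix-valued power series coefficients $R_k$.

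The substantive step is to identify this power series with the $R$-matrix uniquely characterized by conditions (1)--(4) of Theorem~\ref{R-matrix}. Conditions (1)--(3) follow from semisimplicity, the fundamental solution property of $\cS$ of Section~\ref{sec:A-S}, and the genus zero topological recursion relations. The boundary condition (4) at the large radius limit $\tQ,\tau''\to 0$ is the heart of the argument: in this limit the GW theory of $\cX$ decouples into a disjoint union of local theories on $[\bC^3/G_\si]$ for $\si\in\Si(3)$, and the $R$-matrix for each such local orbifold is computable via Tseng's orbifold quantum Riemann--Roch \cite{Ts10}. The Bernoulli expression in \eqref{eqn:R-at-zero} arises from the Mumford-type formula for the twisted Euler class on $\Mbar_{g,n}(\cB G_\si,\beta)$, with the character sum $\chi_\rho(h)\chi_\gamma(h^{-1})$ coming from the decomposition of the inertia stack of $\cB G_\si$.

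The main obstacle is the orbifold generalization of the graph-level localization bookkeeping: one must carefully account for the automorphism factors $|G_\si|^{-1}$, the age contributions $c^\si_i(h)$, and the monodromies at nodal stacky points in the source curves, so that the resulting vertex--edge factorization recovers $\widehat{\Psi}\widehat{R}\cD_I$ exactly rather than a superficially similar expression. Once the localization sum is reorganized in this way and matched with the Feynman rules produced by applying Wick's formula to $\exp(\widehat{R})$, both sides agree term by term in the expansion parameters $\hbar$, $\tQ$, and $\btau$, completing the proof.
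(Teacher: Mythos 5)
Your proposal follows essentially the same route as the cited reference \cite{Zo}: $\bT$-equivariant virtual localization on $\Mbar_{g,k}(\cX,\beta)$, organization of fixed loci as decorated graphs coming from the GKM skeleton, a vertex–edge factorization of the contributions, and an identification of the resulting $R$-series via Dubrovin--Givental uniqueness pinned down at the large-radius limit by orbifold quantum Riemann--Roch. The paper itself only cites \cite{Zo}, so this is the correct intended proof, and your outline captures its main ingredients.

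Two imprecisions are worth flagging so that the plan does not silently go wrong. First, the opening invocation of ``Givental--Teleman'' is misleading: Teleman's classification of semisimple CohFTs is a different (non-equivariant, homotopy-theoretic) uniqueness result that is not what \cite{Zo} uses; the argument you then proceed to describe is the Givental localization argument, and stating the proof as resting on ``the same uniqueness principle'' is not what is actually done. Second, and more substantively, the vertex contributions coming out of virtual localization are \emph{not} directly the trivial CohFT $\cD_I$: a vertex over a fixed point $\fl_\si = [\mathrm{pt}/G_\si]$ contributes an \emph{orbifold Hodge integral} over $\Mbar_{g,n}(\cB G_\si)$ (an integral of $\psi$-classes against inverse equivariant Euler classes of the normal bundle, twisted by the $G_\si$-representation data). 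Converting these to integrals over $\Mbar_{g,n}$ — and thus to $\cD_I$ — is exactly where orbifold QRR \cite{Ts10} is applied, and the conversion factor is precisely the Bernoulli-type expansion that becomes the large-radius-limit of the $R$-matrix in condition (4) of Theorem~\ref{R-matrix}. In your write-up this QRR step is described only as a means to compute ``the boundary condition'' for $R$, while the vertex is said to compute $\cD_I$ ``precisely''; that mislocates where QRR enters and would leave the vertex-to-$\cD_I$ identification unjustified if followed literally. Once this is corrected — vertex equals (orbifold Hodge integral) $=$ ($\cD_I$) $\times$ (QRR exponential), and the QRR exponential is absorbed into $R$ — the argument closes as you describe, with the Wick-expansion/Feynman-diagram matching giving $\widehat{\Psi}\widehat{R}\cD_I$.
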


Similarly, there is a Givental formula for the descendent potential of $\cX$:

\begin{theorem}\label{descendent-ancestor}
For $2g-2+n> 0$, we have the following relation
$$\llangle \bt(\psi_1),\dots,\bt(\psi_n) \rrangle_{g,n}^{\cX}=\llangle [\cS\bt]_+(\bar\psi_1),\dots,[\cS\bt]_+(\bar\psi_n)\rrangle_{g,n}^{\cX}.$$
Here we consider $\bt=\bt(z)$ as element in $\bH$ and $[\cS\bt]_+(z)$ is the part of $\cS\bt$ containing nonnegative powers of $z$.

\end{theorem}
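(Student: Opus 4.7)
The identity is the classical ancestor-descendant correspondence of Kontsevich--Manin / Getzler / Givental, transplanted to the $\bT$-equivariant orbifold setting. The plan is to compare the two systems of $\psi$-classes on the moduli of stable maps, extract the contribution of rational tails at each marked point, and recognize the resulting generating series as the $\cS$-operator applied to $\bt$.

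Fix $g,n,\beta,\ell$ with $2g-2+n>0$, and let $p:\Mbar_{g,n+\ell}(\cX,\beta)\to\Mbar_{g,n}$ be the map that forgets the last $\ell$ marked points and the stable map and then stabilizes. I would start from the well-known comparison of cotangent lines: for $i=1,\dots,n$,
\begin{equation}\label{eq:psi-compare}
\psi_i \;=\; p^*\bar\psi_i \;+\; D_i,
\end{equation}
where $D_i\subset\Mbar_{g,n+\ell}(\cX,\beta)$ is the boundary divisor parametrizing domains in which the $i$-th marked point sits on a genus-zero irreducible component that gets contracted by $p$. Writing $\bt(\psi_i)=\sum_{a\ge 0}t_a\psi_i^a$ and substituting \eqref{eq:psi-compare}, each monomial $\psi_i^{a}$ is expanded by the binomial formula. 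On each $D_i$-stratum the divisor $D_i$ is the image of a gluing morphism from a product
\[
\Mbar_{0,2+m}(\cX,\beta_2)\times_{\cI\cX}\Mbar_{g,n+\ell-m}(\cX,\beta_1),
\]
with $\beta_1+\beta_2=\beta$, $m$ of the $\ell$ extra insertions sitting on the rational tail, and on the rational-tail factor the restriction of $\psi_i$ equals $-\psi_{\mathrm{node}}$. The orbifold splitting axiom and string/dilaton-equation-compatible identifications on $\cI\cX$ make these contributions exactly analogous to the smooth case.

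The combinatorial core is Step 2: iterating \eqref{eq:psi-compare} on the main component, one generates chains of rational tails at the $i$-th marked point with arbitrarily many extra insertions of $\btau$ and arbitrarily many node $\psi$-powers. Summing this geometric series at a single insertion $i$ produces, by the very definition
\[
(a,\cS(b))_{\cX,\bT}=(a,b)_{\cX,\bT}+\Big\llangle a,\tfrac{b}{z-\psi}\Big\rrangle^{\cX,\bT}_{0,2},
\]
the action of $\cS$ on the insertion coefficient $t_a$ at that marked point; the surviving $z^{\ge 0}$-terms come from the monomials $\psi_i^{a}$ with $a$ large enough that all tail-$\psi$ integrations are absorbed, while negative powers correspond to tails that would carry the node $\psi$-class outside the geometric locus and hence vanish. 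This is the origin of the $[\cdot]_+$ projection. Because distinct marked points attach tails independently (different boundary divisors $D_i$ meet transversely in the strata relevant to this expansion), the dressing factorizes and is applied at each of the $n$ marked points separately, yielding the right-hand side insertions $[\cS\bt]_+(\bar\psi_i)$.

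The main obstacle is bookkeeping: organizing the sum over the number of rational tails, the distribution of the $\ell$ extra insertions (and the $1/\ell!$) among tails and main component, and matching it against the power-series expansion $(z-\psi)^{-1}=\sum_{k\ge 0}\psi^k z^{-k-1}$ that defines $\cS$. A clean way to package this is to pass to the symplectic/Fock-space formulation: the identity is equivalent to the operator-level statement $\cD_\cX(\bt)=\widehat{\cS^{-1}}\,\cA_\cX(\bt)$, in which rational-tail recursion is encoded by the dilaton shift $\bq(z)=\bt(z)-\one z$ together with the fact that $\cS$ is a fundamental solution of the equivariant big QDE (Section~\ref{sec:A-S}). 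With that reformulation, the correlator identity in the theorem is the coefficient-by-coefficient unpacking of the operator statement, proving the claim for all $(g,n)$ with $2g-2+n>0$.
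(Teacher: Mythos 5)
The paper states Theorem~\ref{descendent-ancestor} without proof, treating the descendant--ancestor correspondence as a known fact in the orbifold-equivariant setting (it follows from the Kontsevich--Manin/Getzler/Givental theory, extended to orbifolds in \cite{Ts10} and to GKM orbifolds in \cite{Zo}). So there is no paper proof to compare against; your sketch supplies the standard geometric argument, which is the correct route.

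A few points would need tightening if written out in full. First, substituting $\psi_i = p^*\bar\psi_i + D_i$ into $\psi_i^a$ should not be done by the binomial theorem, since $D_i^j$ for $j\geq 2$ requires excess intersection formulas; the standard device is to iterate $\psi_i^a = \bar\psi_i\,\psi_i^{a-1} + D_i\,\psi_i^{a-1}$, peeling off one rational tail at a time. Second, the claim that $\psi_i$ restricts to $-\psi_{\mathrm{node}}$ on the rational tail factor is not correct as stated; what actually enters is that $\bar\psi_i|_{D_i}$ is the node $\psi$-class pulled back from the main component, and the tail contributes genus-$0$ two-point descendant invariants whose sum is the two-point series defining $\cS$. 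Third, the final Fock-space reformulation $\cD_\cX(\bt)=\widehat{\cS^{-1}}\cA_\cX(\bt)$ is, as written, circular: the argument shift $\bt\mapsto[\cS\bt]_+$ is exactly the content of the theorem, the two potentials are functions of different variables, and a genus-one prefactor $e^{F^1(\btau)}$ is missing from the operator statement. None of these affect the validity of the approach. The essential geometry --- the rational-tail stratification of $D_i$, the identification of the summed tail contributions with the $\cS$-operator, and the vanishing of the terms removed by the $[\cdot]_+$ truncation (since negative powers would place all $\psi$-weight off the geometric locus) --- is correctly identified in your sketch.
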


\subsection{The graph sum formula}
\label{sec:Agraph}
In order to state the graph sum formula, we need to introduce some definitions.

\begin{itemize}
\item We define
$$
S^{\widehat{\underline{\bsi}}}_{\spa \widehat{\underline{\bsi'}} }(z)
:= (\hat{\phi}_{\bsi}(t), \cS(\hat{\phi}_{\bsi'}(t))).
$$
Then $(S^{ \widehat{\underline{\bsi}}  }_{\spa \widehat{\underline{\bsi'}} }(z))$ is the matrix of the $\cS$-operator with
respect to the normalized canonical basis  $\{ \hat{\phi}_{\bsi}(t): \bsi\in I_\Si\} $:
\begin{equation}
\cS(\hat{\phi}_{\bsi'}(t))=\sum_{\bsi\in I_\Si} \hat{\phi}_{\bsi}(t)
S^{\widehat{\underline{\bsi}} }_{\spa \widehat{\underline{\bsi'}} }(z).
\end{equation}
\item We define
$$
S^{\widehat{\underline{\bsi}}}_{\spa \bsi'}(z)
:= (\hat{\phi}_{\bsi}(t), \cS(\phi_{\bsi'})).
$$
Then $(S^{ \widehat{\underline{\bsi}}  }_{\spa \bsi'}(z))$ is the matrix of the $\cS$-operator with
respect to the  basis $\{\phi_{\bsi}:\bsi\in I_\Si\}$ and
$\{ \hat{\phi}_{\bsi}(t): \bsi\in I_\Si\} $:
\begin{equation}
\cS(\phi_{\bsi'})=\sum_{\bsi\in I_\Si} \hat{\phi}_{\bsi}(t)
S^{\widehat{\underline{\bsi}} }_{\spa \bsi'}(z).
\end{equation}
\end{itemize}

Given a connected graph $\Ga$, we introduce the following notation.
\begin{enumerate}
\item $V(\Ga)$ is the set of vertices in $\Ga$.
\item $E(\Ga)$ is the set of edges in $\Ga$.
\item $H(\Ga)$ is the set of half edges in $\Ga$.
\item $L^o(\Ga)$ is the set of ordinary leaves in $\Ga$. The ordinary
leaves are ordered: $L^o(\Ga)=\{l_1,\ldots,l_n\}$ where
$n$ is the number of ordinary leaves.
\item $L^1(\Ga)$ is the set of dilaton leaves in $\Ga$. The dilaton leaves are unordered.
\end{enumerate}

With the above notation, we introduce the following labels:
\begin{enumerate}
\item (genus) $g: V(\Ga)\to \bZ_{\geq 0}$.
\item (marking) $\bsi: V(\Ga) \to I_\Si$. This induces
$\bsi :L(\Ga)=L^o(\Ga)\cup L^1(\Ga)\to I_\Si$, as follows:
if $l\in L(\Ga)$ is a leaf attached to a vertex $v\in V(\Ga)$,
define $\bsi(l)=\bsi(v)$.
\item (height) $k: H(\Ga)\to \bZ_{\geq 0}$.
\end{enumerate}

Given an edge $e$, let $h_1(e),h_2(e)$ be the two half edges associated to $e$. The order of the two half edges does not affect the graph sum formula in this paper. Given a vertex $v\in V(\Ga)$, let $H(v)$ denote the set of half edges
emanating from $v$. The valency of the vertex $v$ is equal to the cardinality of the set $H(v)$: $\val(v)=|H(v)|$.
A labeled graph $\vGa=(\Ga,g,\bsi,k)$ is {\em stable} if
$$
2g(v)-2 + \val(v) >0
$$
for all $v\in V(\Ga)$.

Let $\bGa(\cX)$ denote the set of all stable labeled graphs
$\vGa=(\Gamma,g,\bsi,k)$. The genus of a stable labeled graph
$\vGa$ is defined to be
$$
g(\vGa):= \sum_{v\in V(\Ga)}g(v)  + |E(\Ga)|- |V(\Ga)|  +1
=\sum_{v\in V(\Ga)} (g(v)-1) + (\sum_{e\in E(\Gamma)} 1) +1.
$$
Define
$$
\bGa_{g,n}(\cX)=\{ \vGa=(\Gamma,g,\bsi,k)\in \bGa(\cX): g(\vGa)=g, |L^o(\Ga)|=n\}.
$$

\begin{figure}[h]
\begin{center}
\psfrag{g1}{\small $g=1$}
\psfrag{h1}{\small $k=1$}
\psfrag{g2}{\small $g=1$}
\psfrag{h2-1}{\small $k=0$}
\psfrag{h2-2}{\small $k=2$}
\psfrag{g3}{\small $g=0$}
\psfrag{h3-1}{\small $\ \ k=0$}
\psfrag{h3-2}{\small $k=0$}
\psfrag{h3-3}{\small $k=0$}
\includegraphics[scale=0.6]{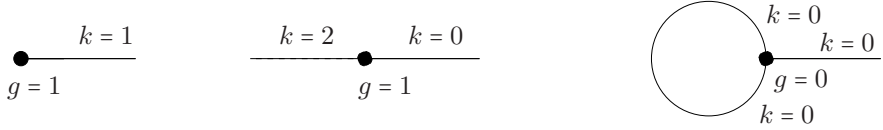}
\caption{A set $\bGa_{1,1}(\cX)$. Each graph here corresponds to several graphs in $\bGa_{1,1}(\cX)$: one can label each vertex by any element in $I_\Si$. Dashed lines are dilaton leafs, whose lowest possible height is $2$.}
\end{center}
\end{figure}

We assign weights to leaves, edges, and vertices of a labeled graph $\vGa\in \bGa(\cX)$ as follows.
\begin{enumerate}
\item {\em Ordinary leaves.}
To each ordinary leaf $l_j \in L^o(\Ga)$ with  $\bsi(l_j)= \bsi\in I_\Si$
and  $k(l)= k\in \bZ_{\geq 0}$, we assign the following descendant  weight:
\begin{equation}\label{eqn:u-leaf}
(\cL^{\bu})^{\bsi}_k(l_j) = [z^k] (\sum_{\bsi',\brho\in I_\Si}
\left(\frac{\bu_j^{\bsi'}(z)}{\sqrt{\Delta^{\bsi'}(t)} }
S^{\widehat{\underline{\brho}} }_{\spa
  \widehat{\underline{\bsi'}}}(z)\right)_+ R(-z)_{\brho}^{\spa \bsi} ),
\end{equation}
where $(\cdot)_+$ means taking the nonnegative powers of $z$.

\item {\em Dilaton leaves.} To each dilaton leaf $l \in L^1(\Ga)$ with $\bsi(l)=\bsi
\in I_\Si$
and $2\leq k(l)=k \in \bZ_{\geq 0}$, we assign
$$
(\cL^1)^{\bsi}_k := [z^{k-1}](-\sum_{\bsi'\in I_\Si}
\frac{1}{\sqrt{\Delta^{\bsi'}(t)}}
R_{\bsi'}^{\spa \bsi}(-z)).
$$

\item {\em Edges.} To an edge connected a vertex marked by $\bsi\in I_\Si$ to a vertex
marked by $\bsi'\in I_\Si$ and with heights $k$ and $l$ at the corresponding half-edges, we assign
$$
\cE^{\bsi,\bsi'}_{k,l} := [z^k w^l]
\Bigl(\frac{1}{z+w} (\delta_{\bsi\bsi'}-\sum_{\brho\in I_\Si}
R_{\brho}^{\spa \bsi}(-z) R_{\brho}^{\spa \bsi'}(-w)\Bigr).
$$
\item {\em Vertices.} To a vertex $v$ with genus $g(v)=g\in \bZ_{\geq 0}$ and with
marking $\bsi(v)=\bsi$, with $n$ ordinary
leaves and half-edges attached to it with heights $k_1, ..., k_n \in \bZ_{\geq 0}$ and $m$ more
dilaton leaves with heights $k_{n+1}, \ldots, k_{n+m}\in \bZ_{\geq 0}$, we assign
$$
 \Big(\sqrt{\Delta^{\bsi}(t)}\Big)^{2g(v)-2+\val(v)}\int_{\Mbar_{g,n+m}}\psi_1^{k_1} \cdots \psi_{n+m}^{k_{n+m}}.
$$
\end{enumerate}

We define the weight of a labeled graph $\vGa\in \bGa(\cX)$ to be
\begin{eqnarray*}
w_A^{\bu}(\vGa) &=& \prod_{v\in V(\Ga)} (\sqrt{\Delta^{\bsi(v)}(t)})^{2g(v)-2+\val(v)} \langle \prod_{h\in H(v)} \tau_{k(h)}\rangle_{g(v)}
\prod_{e\in E(\Ga)} \cE^{\bsi(v_1(e)),\bsi(v_2(e))}_{k(h_1(e)),k(h_2(e))}\\
&& \cdot \prod_{l\in L^1(\Ga)}(\cL^1)^{\bsi(l)}_{k(l)}\prod_{j=1}^n(\cL^{\bu})^{\bsi(l_j)}_{k(l_j)}(l_j).
\end{eqnarray*}

With the above definition of the weight of a labeled graph, we have
the following theorem which expresses the $\bT-$equivariant descendent
Gromov-Witten potential of $\cX$ in terms of graph sum.

\begin{theorem}[{Zong \cite{Zo}}]
\label{thm:Zong}
 Suppose that $2g-2+n>0$. Then
$$
\llangle \bu_1,\ldots, \bu_n\rrangle_{g,n}^{\cX,\bT}=\sum_{\vGa\in \bGa_{g,n}(\cX)}\frac{w_A^{\bu}(\vGa)}{|\Aut(\vGa)|}.
$$
\end{theorem}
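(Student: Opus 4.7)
The plan is to reduce the descendent correlator to the ancestor potential via Theorem \ref{descendent-ancestor}, then apply the Givental quantization formula of Theorem \ref{ancestor}, and finally expand the quantization operator $\widehat{R}$ by Wick's formula so that the Feynman rules of Section \ref{sec:Agraph} literally read off from the resulting sum.

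First, by Theorem \ref{descendent-ancestor}, for $2g-2+n>0$ one has
\[
\llangle \bu_1,\ldots,\bu_n\rrangle^{\cX,\bT}_{g,n}
=\llangle [\cS\bu_1]_+(\bar\psi_1),\ldots,[\cS\bu_n]_+(\bar\psi_n)\rrangle^{\cX,\bT}_{g,n},
\]
so the RHS is a coefficient in $\bt(z)$ of the ancestor potential $\cA_\cX(\btau)$ with the insertion $\bt(z)=\sum_j [\cS\bu_j]_+(z)$. Theorem \ref{ancestor} then expresses
\[
\cA_\cX(\btau)=\widehat{\Psi}\,\widehat{R}\,\cD_I.
\]
The operator $\widehat{\Psi}$ is simply a $\bt\mapsto\Psi\bt$ change of basis into the normalized canonical frame, so after this change each ordinary-leaf insertion becomes $\Psi^{-1}[\cS\bu_j]_+$, which is precisely the non-negative $z$-part of the quantity appearing inside the square brackets in the definition \eqref{eqn:u-leaf} of $(\cL^{\bu})^{\bsi}_k$ before the $R(-z)$ factor is applied.

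Next I would analyze $\widehat{R}$. Writing $R(z)=\one+\sum_{k\ge 1}R_kz^k$, the operator $\widehat{R}$ is the quantization of a quadratic Hamiltonian; by the standard Wick formula in symplectic-space quantization, conjugating $\cD_I$ by $\widehat{R}$ produces a sum over graphs in which
\begin{itemize}
\item each vertex carries the trivial-CohFT correlator, which by definition of $\cD_I$ supplies the factor $(\sqrt{\Delta^{\bsi(v)}(t)})^{2g(v)-2+\val(v)}\int_{\Mbar_{g(v),\val(v)}}\prod\psi_h^{k(h)}$;
\item each internal edge carries the propagator obtained from the Wick contraction of two $p$-type insertions, which after a short computation is precisely the bivector
$\cE^{\bsi,\bsi'}_{k,l}=[z^kw^l]\bigl(\frac{1}{z+w}(\delta_{\bsi\bsi'}-\sum_\brho R(-z)^{\spa\bsi}_\brho R(-w)^{\spa\bsi'}_\brho)\bigr)$;
\item the dilaton shift $\bt(z)=\bq(z)+\one z$ generates unpaired $q$-type insertions, which after being acted on by $R(-z)$ produce exactly the dilaton-leaf factor $(\cL^1)^{\bsi}_k$, starting at $k=2$ due to the stability condition and the shape of $R(-z)\one z$;
\item the external $\bt$-insertions $\Psi^{-1}[\cS\bu_j]_+$ get multiplied on the right by $R(-z)$ in the course of $\widehat R$ acting on them, yielding exactly $(\cL^{\bu})^{\bsi}_k(l_j)$ in \eqref{eqn:u-leaf}.
\end{itemize}
Combining vertices, propagators, and the two kinds of leaves, and dividing by $|\Aut(\vGa)|$ as dictated by the combinatorics of Wick contractions, gives the claimed graph-sum formula.

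The main technical step, and what I expect to be the chief obstacle, is the precise bookkeeping that turns the quantization identity $\cA_\cX=\widehat{\Psi}\widehat{R}\cD_I$ into the Feynman-diagram expansion with the exact edge and leaf weights specified in Section \ref{sec:Agraph}. Two issues in particular require care: (i) computing the propagator $\cE^{\bsi,\bsi'}_{k,l}$ arising from $\widehat R$ uses the unitarity $R^T(-z)R(z)=\one$ of Theorem \ref{R-matrix}(3) (equivalently, the infinitesimal symplecticity of $\log R$) to cancel a formal pole at $z+w=0$ and produce the regularized kernel $\frac{1}{z+w}(\delta_{\bsi\bsi'}-\sum_\brho R(-z)^{\spa\bsi}_\brho R(-w)^{\spa\bsi'}_\brho)$; and (ii) matching the ordinary-leaf factor requires identifying $\cS=\Psi R(z)e^{U/z}\Psi^{-1}\cdot(\text{ancestor-to-descendent factor})$ correctly on the normalized canonical basis, so that the combination $[\Psi^{-1}\cS\bu]_+ \cdot R(-z)$ in the Feynman expansion matches the formula \eqref{eqn:u-leaf} involving $S^{\widehat{\underline{\brho}}}_{\spa\widehat{\underline{\bsi'}}}(z)/\sqrt{\Delta^{\bsi'}(t)}$. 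Both are carried out in \cite{Zo} in the GKM-orbifold setting and specialize directly to the toric Calabi-Yau $3$-orbifold case at hand.
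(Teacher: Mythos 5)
The paper does not prove Theorem \ref{thm:Zong} but simply cites it from \cite{Zo}, so strictly speaking there is no in-paper proof to compare against. Your outline is the correct standard route and is what is carried out in \cite{Zo}: pass from descendants to ancestors by Theorem \ref{descendent-ancestor}, invoke the quantization formula $\cA_\cX=\widehat{\Psi}\widehat{R}\cD_I$ of Theorem \ref{ancestor}, and expand $\widehat{R}$ by Wick's theorem so that the $\hat p\hat p$ contractions give the edge propagator $\cE^{\bsi,\bsi'}_{k,l}$, the dilaton shift $\bq=\bt-\one z$ produces the dilaton leaves starting at height $k=2$, and the external insertions $[\Psi^{-1}\cS\bu_j]_+$ composed with $R(-z)$ produce the ordinary-leaf weight \eqref{eqn:u-leaf}. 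You also correctly flag the two delicate points: regularity of the propagator at $z+w=0$ via the symplecticity $R^T(-z)R(z)=\one$, and the precise interaction between $\Psi$, the $\cS$-matrix in normalized canonical frame (producing the $1/\sqrt{\Delta^{\bsi'}(t)}$ factor), and the $R(-z)$ factor in the ordinary leaf. Deferring the bookkeeping to \cite{Zo} is consistent with how the survey itself treats this result.
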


\begin{remark}\label{descendent Q}
In the above graph sum formula, we know that the restriction $S^{\widehat{\underline{\brho}} }_{\spa
  \widehat{\underline{\bsi'}}}(z)|_{Q=1}$ is well-defined by Remark \ref{Novikov}. Meanwhile by (1) in Theorem \ref{R-matrix}, we know that the restriction $R(z)|_{Q=1}$ is also well-defined. Therefore by Theorem \ref{thm:Zong}, we have $\llangle \bu_1,\ldots, \bu_n\rrangle_{g,n}^{\cX,\bT}|_{Q=1}$ is well-defined.
\end{remark}

\subsection{Open-closed Gromov-Witten invariants}\label{sec:open-closed-GW}

The BKMP Remodeling Conjecture predicts open-closed Gromov-Witten potentials \eqref{eqn:open-potential} from the B-model.

Let $\cL\subset \cX$ be an outer Aganagic-Vafa Lagrangian brane.
Let $G_0 := G_{\si_0}$ be the stabilizer of the stacky point $\fl_{\si_0}$.
Our notation is similar to that in \cite[Section 5]{FLZ2}. In
particular, the interger $f$ is a framing, and $\bT'_f:=\Ker
(\su_2-f\su_1)$. The morphism
$H^*(B\bT';\bQ)=\bQ[\su_1,\su_2]\to H^*(B\bT'_f; \bQ)=\bQ[\sv]$
is given by $\su_1\mapsto \sv$, $\su_2\mapsto f\sv$.
The weights of $\bT'$-action on $T_{\fl_{\si_0}}\cX$ are
$$
\sw'_1 = \frac{1}{\fr}\su_1 ,\quad \sw'_2 = \frac{\fs}{\fr\fm}\su_1 +\frac{1}{\fm}\su_2, \quad
\sw'_3 = -\frac{\fs+\fm}{\fr\fm}\su_1 -\frac{1}{\fm}\su_2,
$$
so the weights of $\bT'_f$-action on $T_{\fp_{\si_0}}\cX$ are $w_1\sv$, $w_2\sv$, $w_3\sv$, where
$$
w_1= \frac{1}{\fr},\quad w_2=\frac{\fs +\fr f }{\fr\fm} ,\quad
w_3=\frac{-\fm- \fs- \fr f}{\fr\fm}.
$$
Recall that the correlator $\langle \btau^\ell \rangle^{\cX,(\cL,f)}_{g,d,(\mu_1,k_1),\ldots, (\mu_n,k_n)}$ is the
the equivariant open-closed Gromov-Witten invariant defined in Equation \eqref{eqn:open-GW} (see \cite[Section 3]{FLT} for more details). The open Gromov-Witten potential is defined in \eqref{eqn:open-potential}.

We introduce some notation.
\begin{enumerate}
\item Given $d_0\in \bZ$ and $k\in \bmu_\fm$ let $D'(d_0,k)$ be the disk factor
defined by Equation (13) in \cite{FLZ2}, and define
$$
h(d_0,k): = (e^{2\pi\sqrt{-1}d_0w_1},e^{2\pi\sqrt{-1}(d_0w_2-\frac{k}{\fm})},
e^{2\pi\sqrt{-1}(d_0w_3+\frac{k}{\fm})}) \in G_0\subset \bT=(\bC^*)^3.
$$

\item Given $h\in G_0$, define
\begin{eqnarray*}
\Phi^h_0(\tX)&:=&\frac{1}{|G_0|}
\sum_{\substack{(d_0,k)\in \bZ_{\geq 0}\times \bZ_\fm \\ h(d_0,k)=h} } D'(d_0,k) \tX^{d_0} (-(-1)^{-k/\fm})\one'_\frac{-k}{\fm} \\
&=& \frac{1}{|G_0|}
\sum_{\substack{(d_0,k)\in \bZ_{\geq 0}\times \bZ_\fm \\ h(d_0,k)=h} }
\frac{1}{\fm\sv}e^{\sqrt{-1}\pi(d_0 w_3-c_3(h))}
\big(\frac{\sv}{d_0}\big)^{\age(h)-1} \\
&&  \cdot \frac{\Gamma(d_0 (w_1+w_2)+c_3(h)) }{
\Gamma(d_0 w_1-c_1(h)+1)\Gamma(d_0 w_2 -c_2(h)+1)}\tX^{d_0} \one'_\frac{-k}{\fm}.
\end{eqnarray*}
Then $\Phi^h_0(\tX)$ takes values in $\bigoplus_{k=0}^{\fm-1}\bC \sv^{\age(v)-2} \one'_\frac{k}{\fm}$.

For $a\in \bZ$ and $h\in G_0$,  we define
$$
\Phi^h_a(\tX) := \frac{1}{|G_0|}\sum_{\substack{d_0> 0 \\
    h(d_0,k)=h} } D'(d_0,k)(\frac{d_0}{\sv})^a \tX^{d_0}(-(-1)^{-k/\fm})
\one'_\frac{-k}{\fm}.
$$
Then $\Phi^h_a(\tX)$ takes values in $\bigoplus_{k=0}^{\fm-1}\bC \sv^{\age(v)-2-a} \one'_\frac{k}{\fm}$, and
$$
\Phi^h_{a+1}(\tX)=(\frac{1}{\sv}\tX\frac{d}{d\tX})\Phi^h_a(\tX).
$$

\item For $a\in \bZ$ and $\alpha \in G_0^*$, we define
\begin{align}
\label{eqn:txi}
\txi^\alpha_a(\tX):= |G_0|\sum_{h\in G}\chi_\alpha(h^{-1})
\bigl(\prod_{i=1}^3 (w_i\sv)^{1-c_i(h)}\bigr) \Phi^h_a(\tX).
\end{align}
Then $\txi^\alpha_a(\tX)$ takes values in $\bigoplus_{k=0}^{\fm-1}\bC
\sv^{1-a} \one'_\frac{k}{\fm}$. Denote $\txi^\alpha(z,\tX)=\sum_{a\in \bZ_{\geq
  -2}} z^a \txi^\alpha_a(\tX).$

\item Given $h\in G_0=G_{\si_0}$, recall that  $\one_{\si_0,h}$ be characterized by
$\one_{\si_0,h}|_{\fl_{\si_0}} =\delta_{\si,\si_0} \one_h$. We define
$\one_{\si_0,h}^*=|G|\be_h\one_{\si_0, h^{-1}}$, where $\be_h = \prod_{i=1}^3 (w_i\sv)^{\delta_{0,c_i(h)}}$.
\end{enumerate}

With the above notation, the following proposition comes from localization computation (see \cite{FLT}).
\begin{proposition}\label{prop:open-descendant}
\begin{enumerate}
  \item{\emph{Disk invariants}}
  \begin{eqnarray*}
  && \tF_{0,1}^{\cX,(\cL,f)}(\btau,Q;\tX)\\
  &=&\Phi^1_{-2}(\tX) +\sum_{a=1}^p \tau_a \Phi^{h_a}_{-1}(\tX)+ \sum_{a\in \bZ_{\geq 0}}
  \sum_{h\in G_0} \big( \llangle \one_{\si_0,h}^*{\psi}^a \rrangle^{\cX,\bT_f}_{0,1}\big|_{t=\btau}\big)
  \Phi^h_a(\tX) \\
  &=& \frac{1}{|G_0|^2 w_1 w_2 w_3}\Big( \sum_{\alpha\in G_0^*}\txi^\alpha_{-2}(\tX)
  + \sum_{a=1}^p \tau_a \prod_{i=1}^3 w_i^{c_i(h_a)}\sum_{\alpha \in G_0^*}
  \chi_\alpha(h_a)\txi^\alpha_{-1}(\tX)\Big)\Big|_{\sv=1}\\
  &&  +\sum_{a\in \bZ_{\geq 0}}\sum_{\alpha\in G_0^*}
  \big(\llangle \phi_{\si_0,\alpha}{\psi}^a \rrangle^{\cX,\bT_f}_{0,1}\big|_{t=\btau} \big) \txi^\alpha_a(\tX)\\
  &=&[z^{-2}]\sum_{\alpha\in
      G_0^*}S_z(1,\phi_{\si_0,\alpha})\txi^\alpha(z,\tX).
  \end{eqnarray*}

\item{\em (annulus invariants)}
\begin{eqnarray*}
&& \tF_{0,2}^{\cX,(\cL,f)}(\btau; \tX_1,\tX_2)-F^{\cX,(\cL,f)}_{0,2}(0;\tX_1,\tX_2)\\
&=& \sum_{a_1,a_2\in \bZ_{\geq 0}}
\sum_{h_1,h_2\in G_0}
\big(  \llangle \one_{\si_0, h_1}^*{\psi}^{a_1}, \one_{\si_0, h_2}^* {\psi}^{a_1}\rrangle^{\cX,\bT_f}_{0,2}\big|_{t=\btau}\big)
\Phi^{h_1}_{a_1}(\tX_1)\Phi^{h_2}_{a_2}(\tX_2) \\
&=& \sum_{a_1, a_2\in \bZ_{\geq 0}} \sum_{\alpha_1, \alpha_2\in G_0^*}
\big( \llangle {\phi}_{\si_0, \alpha_1}{\psi}^{a_1},
\phi_{\si_0,\alpha_2} {\psi}^{a_1} \rrangle^{\cX,\bT_f}_{0,2}\big|_{t=\btau}\big)
\txi^{\alpha_1}_{a_1}(\tX_1)\txi^{\alpha_2}_{a_2}(\tX_2)\\
\end{eqnarray*}
where
\begin{equation}\label{eqn:annulus-zero}
\begin{aligned}
&(\tX_1\frac{\partial}{\partial \tX_1}+ \tX_2\frac{\partial}{\partial \tX_2}) \tF_{0,2}^{\cX,(\cL,f)}(0;\tX_1,\tX_2)\\
=&|G_0| \Big(\sum_{h\in G_0} \be_h \Phi^h_0(\tX_1) \Phi^{h^{-1}}_0(\tX_2)\Big)\Big|_{\sv=1}\\
=&\frac{1}{|G_0|^2 w_1w_2w_3} \Big(\sum_{\gamma\in G_0^*}\big(\txi^\gamma_0(\tX_1)\txi^\gamma_0(\tX_2)\Big)\Big|_{\sv=1}.
\end{aligned}
\end{equation}

So we have
\begin{align*}
&\tF_{0,2}^{\cX,(\cL,f)}(\btau; \tX_1, \tX_2)\\
=&[z_1^{-1}z_2^{-1}]\sum_{\alpha_1,\alpha_2\in
   G_0^*}V_{z_1,z_2}(\phi_{\si_0, \alpha_1}, \phi_{\si_0,\alpha_2})\txi^{\alpha_1}(z_1,\tX_1)\txi^{\alpha_2}(z_2,\tX_2).
\end{align*}

\item For $2g-2+n>0$,
\begin{eqnarray*}
&& \tF_{g,n}^{\cX,(\cL,f)}(\btau; \tX_1,\ldots, \tX_n)\\
&=&\sum_{a_1,\ldots, a_n\in \bZ_{\geq 0}}
\sum_{h_1,\ldots, h_n\in G_0}
\Big( \llangle \one_{h_1}^*{\psi}^{a_1},\ldots, \one_{h_n}^*{\psi}^{a_n} \rrangle^{\cX,\bT_f}_{g,n}\Big|_{t=\btau}\Big)
\prod_{j=1}^n \Phi^{h_j}_{a_j}(\tX_j) \\
&=&\sum_{a_1,\ldots, a_n\in \bZ_{\geq 0}}
\sum_{\alpha_1,\ldots, \alpha_n\in G_0^*}
\Big(\llangle {\phi}_{\si_0,\alpha_1}{\psi}^{a_1},\ldots
{\phi}_{\si_0,\alpha_n} {\psi}^{a_n}\rrangle^{\cX,\bT_f}_{g,n}\Big|_{t=\btau}\Big)
\prod_{j=1}^n \txi^{\alpha_j}_{a_j}(\tX_j).\\
&=&[z_1^{-1}\dots z_n^{-1}]\sum_{\alpha_1,\dots,\alpha_n\in G_0^*}
\Big( \llangle\frac{\phi_{\si_0,\alpha_1}}{z_1-{\psi}_1},
    \frac{\phi_{\si_0,\alpha_2}}{z_2-{\psi}_2}, \dots, \frac{\phi_{\si_0,\alpha_n}}{z_n-{\psi}_n}\rrangle_{g,n}^{\cX,\bT_f}\Big|_{t=\btau}\Big)
    \prod_{j=1}^n\txi^{\alpha_j}(z_j,\tX_j).
\end{eqnarray*}
\end{enumerate}
\end{proposition}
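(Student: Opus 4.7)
The plan is to apply $\bT'_f$-equivariant virtual localization to the open moduli space $\Mbar_{(g,n),\ell}(\cX,\cL\mid\beta',\vmu)$. Since $\cL$ is preserved by $\bT'_f$ and meets $\cX$ along the unique $1$-dimensional $\bT$-orbit whose closure contains the stacky fixed point $\fl_{\si_0}$, the $\bT'_f$-fixed loci are indexed by decorated bipartite graphs: the domain curve is cut into disk components (mapping to a $\bT'_f$-invariant disk in $\cX$ with boundary in $\cL$ and framing $f$) and a closed part mapping into $\cX$, all glued at $\fl_{\si_0}$ via orbifold nodes labelled by elements $h\in G_0$.

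For each such fixed locus, the virtual contribution factors into three pieces: first, a disk contribution per boundary component, which by \cite{FLZ2} equals $D'(d_0,k)$ with winding $d_0\in \bZ_{>0}$ and monodromy $k\in\bmu_\fm$; second, a vertex factor at $\fl_{\si_0}$ expressed through the twisted sector class $\one_{\si_0,h}$ where $h=h(d_0,k)\in G_0$ is prescribed by the disk monodromy; and third, a closed stable map contribution packaged as a descendant correlator of $\cX$ with insertions $\one^*_{\si_0,h_j}\psi^{a_j}$ at each node. Summing over the winding data at a fixed boundary with the Novikov variable $\tX^{d_0}$ produces exactly the generating function $\Phi^h_{a}(\tX)$, while the interior insertions from $\btau^\ell$ recover the bracket $\llangle\,\cdot\,\rrangle$ with divisor-equation $Q^\beta \mapsto \tQ^\beta e^{\btau}$ absorbed.

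For the range $2g-2+n>0$ the localization sum directly gives the second expression in (3). Passing from the class basis $\{\one^*_{\si_0,h}\}_{h\in G_0}$ to the canonical basis $\{\phi_{\si_0,\alpha}\}_{\alpha\in G_0^*}$ via character orthogonality on $G_0$ converts $\Phi^h_a(\tX)$ into $\txi^\alpha_a(\tX)$ by definition \eqref{eqn:txi}. Finally, using the identity $\sum_{a\ge 0}\llangle \phi\,\psi^a\rrangle\,\txi^\alpha_a(\tX)=[z^{-1}]\,\llangle \phi/(z-\psi)\rrangle\,\txi^\alpha(z,\tX)$ and the very definition of $S_z$ (for the disk) and of $V_{z_1,z_2}$ via the WDVV-like equation \eqref{eqn:two-in-one} (for the annulus) repackages the open potentials into the compact $\cS$-operator / $V$-form claimed.

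The main obstacle is the unstable cases. For the disk, $\Mbar_{0,1}$ and $\Mbar_{0,2}$ do not exist as honest Deligne--Mumford spaces, so the two leading terms $\Phi^1_{-2}(\tX)$ and $\sum_a\tau_a\Phi^{h_a}_{-1}(\tX)$ cannot come from a bracket $\llangle\cdots\rrangle_{0,1}$ and must be inserted by hand from the localization of maps with one disk and $0$ or $1$ interior marked points; one then checks, via the divisor/string equation applied to the definition of $\cS=\one+\cS_1/z+\cdots$, that these two boundary terms are precisely the $[z^{-2}]$ and $[z^{-1}]$ contributions needed so that the total collapses into $[z^{-2}]\sum_{\alpha}S_z(\one,\phi_{\si_0,\alpha})\txi^\alpha(z,\tX)$. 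For the annulus the analogous unstable contribution $\tF_{0,2}^{\cX,(\cL,f)}(0;\tX_1,\tX_2)$ is computed by a direct localization on $\Mbar_{(0,2),0}(\cX,\cL\mid\beta',\vmu)$; the identity \eqref{eqn:annulus-zero} then matches it with the pairing piece $(a,b)_{\cX,\bT}/(z_1+z_2)$ of $V_{z_1,z_2}$, so that after adding it to the stable contribution one obtains the full $V_{z_1,z_2}$-formula uniformly for all $(g,n)$.
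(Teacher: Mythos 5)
Your proposal reconstructs exactly the argument the paper attributes to the localization computation in \cite{FLT}: fixed loci of the $\bT'_f$-action on $\Mbar_{(g,n),\ell}(\cX,\cL\mid\beta',\vmu)$ are bipartite graphs built from disks attached at $\fl_{\si_0}$ to a closed stable map, the disk factor $D'(d_0,k)$ sums to $\Phi^h_a(\tX)$, the passage $\{\one^*_{\si_0,h}\}\to\{\phi_{\si_0,\alpha}\}$ is character orthogonality on $G_0$ (matching \eqref{eqn:txi}), and the $\cS$/$V$ rewriting together with the separate treatment of the unstable disk terms ($\ell=0,1$) and the $q=0$ annulus term are precisely what produces the compact $[z^{-2}]S_z$ and $[z_1^{-1}z_2^{-1}]V_{z_1,z_2}$ forms. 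This is the same route the paper takes, so the proposal is correct.
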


\begin{remark}
$\tF_{0,2}^{\cX,(\cL,f)}(0;\tX_1,\tX_2)$ is an $H^*(\cB\bmu_m;\bC)^{\otimes 2}$-valued
power series in $\tX_1,\tX_2$ which vanishes at $(\tX_1,\tX_2)=(0,0)$,
so it is determined by \eqref{eqn:annulus-zero}.
\end{remark}

We now combine Section \ref{sec:Agraph} and the above Proposition \ref{prop:open-descendant} to obtain
a graph sum formula for $\tF_{g,n}^{\cX,(\cL,f)}$.  We use the notation in Section \ref{sec:Agraph},
and introduce the notation
\[
\txi^\bsi(z,\tX)=
\begin{cases}
\txi^\alpha(z,\tX), &\text{ if
    $\bsi=(\si_0,\alpha)$},\\
0,& \text{ if $\bsi=(\si,\alpha)$ and $\si\neq \si_0$}.
\end{cases}
\]

\begin{itemize}
\item Given a labelled graph $\vGa\in \bGa_{g,n}(\cX)$, to each ordinary leaf $l_j\in L^o(\gamma)$
with $\bsi(l_j)=\bsi \in I_\Si$ and $k(l_j)\in \bZ_{\geq 0}$ we assign the following weight (open leaf)
\begin{equation}\label{eqn:O-leaf}
(\tcL^{\tX_j})^{\bsi}_k(l_j) = [z^k]\big(\sum_{\brho,\bsi \in I_\Si}
\Big( \txi^\bsi(z,\tX_j) S^{\widehat{\underline{\brho}}}_{\spa  \bsi}\bigg|_{\substack{t=\btau\\ \sw_i=w_i\sv}}
\Big)_+ R(-z)_{\brho}^{\spa \bsi}\bigg|_{\substack{t=\btau\\\sw_i=w_i \sv}} \big).
\end{equation}

\item Given a labelled graph $\bGa_{g,n}(\cX)$, we define a weight
\begin{eqnarray*}
 && \tw_A^{\tX}(\vGa) = \prod_{v\in V(\Ga)} (\sqrt{\Delta^{\bsi(v)}(t)}\bigg|_{\substack{t=\btau\\\sw_i=w_i\sv} })^{2g(v)-2+\val(v)} \langle \prod_{h\in H(v)} \tau_{k(h)}\rangle_{g(v)} \\
&& \cdot \Big( \prod_{e\in E(\Ga)} \cE^{\bsi(v_1(e)),\bsi(v_2(e))}_{k(h_1(e)),k(h_2(e))}\cdot
\prod_{l\in L^1(\Ga)}(\cL^1)^{\bsi(l)}_{k(l)}\Big)\bigg|_{\substack{t=\btau\\ \sw_i = w_i \sv} } \prod_{j=1}^n(\tcL^{\tX_j})^{\bsi(l_j)}_{k(l_j)}(l_j)\\
\end{eqnarray*}
\end{itemize}

Then we have the following graph sum formula for $F_{g,n}^{\cX,(\cL,f)}$.
\begin{theorem}\label{tF-graph-sum}
$$
\tF_{g,n}^{\cX,(\cL,f)} = \sum_{\vGa\in \bGa_{g,n}(\cX)}\frac{\tw_A^{\tX}(\vGa)}{|\Aut(\vGa)|}.
$$
\end{theorem}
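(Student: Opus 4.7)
The plan is to reduce the open-closed graph sum to the closed descendant graph sum of Theorem~\ref{thm:Zong}, using Proposition~\ref{prop:open-descendant}(3) to package the open boundary data $\txi^\alpha(z,\tX_j)$ as the generating series of descendant insertions supported at the single $\bT$-fixed point $\fl_{\si_0}$.

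First, for the stable range $2g-2+n>0$, I rewrite Proposition~\ref{prop:open-descendant}(3) in the form
\[
\tF_{g,n}^{\cX,(\cL,f)} = \llangle \bu_1(\psi_1),\ldots,\bu_n(\psi_n)\rrangle_{g,n}^{\cX,\bT_f}\Big|_{t=\btau},
\]
where $\bu_j(z) = \sum_{\alpha\in G_0^*}\phi_{\si_0,\alpha}\, \txi^\alpha(z,\tX_j)$ is treated as a formal insertion: the identity $[z_j^{-1}]\frac{\psi_j^a}{z_j-\psi_j}\txi^\alpha(z_j,\tX_j) = \txi^\alpha_a(\tX_j)$ matches the Laurent power $z_j^a$ of $\txi^\alpha$ with the descendant order $\psi_j^a$, so only the part $\txi^\alpha_a$ with $a\geq 0$ contributes under closed descendants. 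In the index convention of Section~\ref{sec:Agraph}, $\bu_j^{\bsi}(z)$ vanishes unless $\bsi=(\si_0,\alpha)$ for some $\alpha\in G_0^*$, matching the convention for $\txi^\bsi$ used in \eqref{eqn:O-leaf}.

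Second, I apply Theorem~\ref{thm:Zong} to the right-hand side above, with $\bT$-equivariant parameters specialized to the $\bT_f$-equivariant parameters $\sw_i = w_i\sv$ and $t=\btau$. This yields a sum over the same index set $\bGa_{g,n}(\cX)$, with the same vertex, edge, and dilaton-leaf weights that appear in $\tw_A^{\tX}(\vGa)$. The only nontrivial identification is for the ordinary leaves: substituting $\bu_j^{\bsi'}(z)=\sqrt{\Delta^{\bsi'}(\btau)}\,\txi^{\bsi'}(z,\tX_j)$ into \eqref{eqn:u-leaf} (via the relation $\hat\phi_{\bsi'}=\sqrt{\Delta^{\bsi'}}\phi_{\bsi'}$ between canonical and normalized canonical bases), the factors of $\sqrt{\Delta^{\bsi'}(\btau)}$ cancel and one recovers precisely $(\tcL^{\tX_j})^{\bsi}_k(l_j)$ as defined by \eqref{eqn:O-leaf}. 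The main technical obstacle is tracking this normalization through the $(\cdot)_+$ projection and verifying that the negative-power parts of $\txi^\alpha(z,\tX_j)$ are harmless, since they are either absent from $\bu_j$ or annihilated by $(\cdot)_+$.

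Third, the unstable cases $(g,n)=(0,1)$ and $(g,n)=(0,2)$ must be treated separately, since $\bGa_{g,n}(\cX)$ is empty for stability failure. For the disk I would match the expression $[z^{-2}]\sum_\alpha S_z(1,\phi_{\si_0,\alpha})\txi^\alpha(z,\tX)$ from Proposition~\ref{prop:open-descendant}(1) against a degenerate contribution of a single open leaf, using the $\cS$-operator expansion \eqref{eqn:a-model-S} in the form $\Psi R(z)e^{U/z}$. For the annulus I would use the WDVV-type identity \eqref{eqn:two-in-one} to unfold $V_{z_1,z_2}$ into a sum of products of two $\cS$-matrix elements, which is precisely the contribution of a degenerate graph consisting of two open leaves joined by a single edge whose weight comes from $\cE^{\bsi,\bsi'}_{k,l}$. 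In both unstable cases the matching is a direct computation once the $R$-matrix factorization of $\cS$ is inserted.
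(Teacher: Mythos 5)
Your overall strategy coincides with the paper's: rewrite $\tF_{g,n}^{\cX,(\cL,f)}$ (for $2g-2+n>0$) as a $\bT_f'$-equivariant descendant correlator via Proposition~\ref{prop:open-descendant}(3), then apply the graph sum of Theorem~\ref{thm:Zong} and identify the leaf weights. That is exactly what the paper's one-line proof means.

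However, the ordinary-leaf/open-leaf identification as you wrote it is not correct termwise. You substitute $\bu_j^{\bsi'}(z)=\sqrt{\Delta^{\bsi'}(\btau)}\,\txi^{\bsi'}(z,\tX_j)$ into \eqref{eqn:u-leaf} and claim the $\sqrt{\Delta^{\bsi'}(\btau)}$ factors cancel to give \eqref{eqn:O-leaf}. After that cancellation the summand would be $\txi^{\bsi'}(z,\tX_j)\,S^{\widehat{\underline{\brho}}}_{\spa\widehat{\underline{\bsi'}}}(z)$, whereas the open-leaf summand is $\txi^{\bsi'}(z,\tX_j)\,S^{\widehat{\underline{\brho}}}_{\spa\bsi'}(z)$, and these two $\cS$-matrix entries are \emph{not} equal: the first pairs $\hat{\phi}_\brho(t)$ with $\cS(\hat{\phi}_{\bsi'}(t))$ and the second with $\cS(\phi_{\bsi'})$, and $\hat{\phi}_{\bsi'}(t)$ differs from $\phi_{\bsi'}$ by the nondiagonal change-of-basis $\Psi$ between the classical flat canonical basis and the quantum canonical basis, not merely by a $\sqrt{\Delta^{\bsi'}(t)}$ rescaling. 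The identification holds only \emph{after} summing over $\bsi'$: both expressions then compute $\big(\hat{\phi}_\brho(t),\,\cS(\bu_j(z))\big)_+$ with $\bu_j(z)=\sum_{\alpha\in G_0^*}\phi_{\si_0,\alpha}\,\txi^\alpha(z,\tX_j)$, the $\cS$-matrix in \eqref{eqn:u-leaf} decomposing $\bu_j$ in the quantum canonical basis $\{\phi_{\bsi'}(t)\}$ and \eqref{eqn:O-leaf} decomposing it in the classical flat basis $\{\phi_{\bsi'}\}$; the discrepancy is absorbed by the sum, not cancelled in each term. With this correction the argument closes and agrees with the paper.

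Finally, the unstable cases $(g,n)=(0,1),(0,2)$ are simply outside the scope of Theorem~\ref{tF-graph-sum}: Theorem~\ref{thm:Zong} and Proposition~\ref{prop:open-descendant}(3) are both stated for $2g-2+n>0$, and the constraint $k(l)\geq 2$ on dilaton leaves together with $\dim\Mbar_{0,m}=m-3$ forces every graph in $\bGa_{0,1}(\cX)$ and $\bGa_{0,2}(\cX)$ to have vanishing vertex weight, so the formula would be false there anyway. Your sketch of a degenerate leaf/edge treatment of the disk and annulus is therefore extraneous to this theorem; those two potentials are handled separately in the mirror symmetry statement.
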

\begin{proof}
This follows from Theorem \ref{thm:Zong} and Proposition \ref{prop:open-descendant}.
\end{proof}

\begin{definition}[Restriction to $Q=1$]
We define
\begin{align*}
F_{g}^{\cX}(\btau)&:=\tF_g^\cX(\btau)\vert_{Q=1}\\
F_{g,n}^{\cX,(\cL,f)}(\btau; \tX_1,\ldots, \tX_n):&=
\tF_{g,n}^{\cX,(\cL,f)}(\btau;\tX_1,\ldots,\tX_n)\vert_{Q=1}.
\end{align*}
\end{definition}

\noindent
By Remark \ref{descendent Q} and Propsition \ref{prop:open-descendant},
$F_{g,n}^{\cX,(\cL,f)}$ is well-defined. Theorem \ref{tF-graph-sum} implies
\begin{corollary}
$$
F_{g,n}^{\cX,(\cL,f)} = \sum_{\vGa\in \bGa_{g,n}(\cX)}\frac{w_A^{\tX}(\vGa)}{|\Aut(\vGa)|}.
$$
where $w_A^{\tX}(\vGa)= \tw_A^{\tX}(\vGa)\big|_{Q=1}$.
\end{corollary}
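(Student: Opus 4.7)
The plan is to show that the restriction $Q=1$ can be performed term-by-term on the graph sum expansion in Theorem \ref{tF-graph-sum}. By definition,
\[
F_{g,n}^{\cX,(\cL,f)}(\btau;\tX_1,\ldots,\tX_n) = \tF_{g,n}^{\cX,(\cL,f)}(\btau;\tX_1,\ldots,\tX_n)\big|_{Q=1},
\]
so the corollary is equivalent to the claim that the right-hand side of Theorem \ref{tF-graph-sum} admits a well-defined specialization at $Q=1$, and that this specialization commutes with the (finite) graph sum.

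First I would check that every factor making up $\tw_A^{\tX}(\vGa)$ has a well-defined $Q=1$ restriction. The vertex factors
$(\sqrt{\Delta^{\bsi(v)}(t)})^{2g(v)-2+\val(v)}\langle \prod_h \tau_{k(h)}\rangle_{g(v)}$
depend on $Q$ only through the quantum canonical basis $\phi_\bsi(t)$; by Remark \ref{Novikov} (applied to the small quantum product, which is the $Q\to e^{\tau'}Q$ version of $\cS$), the idempotent basis and its norms $\Delta^\bsi(\btau)$ specialize at $Q=1$ via the divisor equation $\tQ^\beta = Q^\beta e^{\langle \tau',\beta\rangle}$. The edge factors $\cE^{\bsi,\bsi'}_{k,l}$ and dilaton-leaf factors $(\cL^1)^\bsi_k$ are polynomials in the entries of $R(\pm z)$ and $1/\sqrt{\Delta^\bsi(t)}$, both of which specialize to well-defined objects at $Q=1$ by item~(1) of Theorem \ref{R-matrix} and the previous observation. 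Finally, the open-leaf factor
$(\tcL^{\tX_j})^\bsi_k(l_j) = [z^k]\bigl(\sum_{\brho,\bsi}(\txi^\bsi(z,\tX_j)\, S^{\widehat{\underline{\brho}}}_{\spa\bsi})_+ R(-z)_\brho^{\spa\bsi}\bigr)$ involves the $\cS$-matrix entries, whose $Q=1$ restriction is well-defined precisely by Remark \ref{descendent Q} (the same computation as in Remark \ref{Novikov}, where the $[z^k]$ truncation forces finiteness in the expansion of $e^{\tau'/z}$).

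Next I would interchange restriction with summation. For fixed $(g,n)$, the set $\bGa_{g,n}(\cX)$ is an infinite set indexed by, among other things, the heights $k(h)\in\bZ_{\geq 0}$, but the $Q$-dependence of each weight $\tw_A^{\tX}(\vGa)$ is a formal power series in $\tQ$ whose degree-$\beta$ coefficient is a polynomial (by definiteness of $R$ and $\cS$ as formal power series in $Q$). Since specialization of a formal power series in $Q$ to $Q=1$ is simply substitution into the $\tQ$-expansion (controlled by the divisor equation), we get
\[
F_{g,n}^{\cX,(\cL,f)} = \tF_{g,n}^{\cX,(\cL,f)}\big|_{Q=1} = \Bigl(\sum_{\vGa\in \bGa_{g,n}(\cX)}\frac{\tw_A^{\tX}(\vGa)}{|\Aut(\vGa)|}\Bigr)\Big|_{Q=1} = \sum_{\vGa\in \bGa_{g,n}(\cX)}\frac{\tw_A^{\tX}(\vGa)\big|_{Q=1}}{|\Aut(\vGa)|},
\]
which is exactly the asserted formula with $w_A^{\tX}(\vGa)=\tw_A^{\tX}(\vGa)|_{Q=1}$.

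The only genuinely substantive point is ensuring the $Q=1$ specialization is valid for each factor -- but this is essentially bookkeeping, relying on Remark \ref{descendent Q} and Proposition \ref{prop:open-descendant} which were already established. The ``hard part'' (such as it is) is simply verifying that the various pieces of the graph sum weight -- the descendant correlators hidden in the $\cS$-matrix, the $R$-matrix, and the quantum idempotent data $\Delta^\bsi(t)$ -- all participate in the divisor-equation mechanism that trades the Novikov variable $Q^\beta$ for $e^{\langle \tau',\beta\rangle}$. Since this has been checked piece-by-piece already in the preceding remarks, the corollary is immediate.
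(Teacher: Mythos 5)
Your proof is correct and is essentially the paper's argument, just spelled out in more detail: the paper disposes of this corollary in one sentence, noting that well-definedness of the $Q=1$ restriction follows from Remark \ref{descendent Q} and Proposition \ref{prop:open-descendant}, and then applying Theorem \ref{tF-graph-sum}. Your factor-by-factor verification (vertex terms via the divisor equation for the quantum idempotents, edges and dilaton leaves via Theorem \ref{R-matrix}(1), open leaves via Remark \ref{descendent Q}) is exactly what the paper's cited remarks encapsulate.
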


\section{B-model quantization: the topological recursion}

\label{sec:B-model-quantization}

In this section we review the definition of a general spectral curve and the topological recursion on it \cite{EO07}. The variables $\hx,\hy,\hX,\hY$, when restricted to a mirror curve, are indeed the same variables in the previous sections.
\subsection{Spectral curves}
Let $C$ be a smooth affine algebraic curve in $(\bC^*)^2$. The
coordinates $\hX,\hY$ map $\Si$ to the first and the second component of $(\bC^*)^2$ respectively. They
are holomorphic functions on $\Si$ and we require them to be Morse. Let $\hX=e^{-x},\hY=e^{-y}$, where $\hx,\hy$ are well-defined on the universal cover $\bC^2$ of $(\bC^*)^2$. We denote the covering map $\pi$
\begin{align*}
p:\  & \bC^2 \to (\bC^*)^2,\\
         &  (\hx, \hy) \mapsto (e^{-\hx}, e^{-\hy}).
\end{align*}
Let $\tC$ be the lift of $C$ under this map, and let $\oC$ be a choice of smooth compactification of $C$, which is a compact Riemann surface. We denote the genus of $\oC$ by $\fg$.

The intersection pairing $H_1(\oC;\bC)\times H_1(\oC;\bC)\to \bC$ is a symplectic pairing. We choose a \emph{Lagrangian} subspace $\cA$ of $H_1(\oC;\bC)$. A \emph{Torelli marking} on $\oC$ is a choice of symplectic basis $A_1,\dots, A_\fg,B_1,\dots, B_\fg$ in $H_1(\oC;\bC)$, such that $A_i\cap B_j=\delta_{i,j}$
and $A_i\cap A_j =B_i\cap B_j=0$.\footnote{We
    allow such cycles to be non-geometric, i.e. elements in
    $H_1(\oC;\bC)$, and not necessarily in $H_1(\oC;\bZ)$.}
Following the notions from \cite{Fay}, we define the fundamental normalized differential of the second kind (a.k.a. Bergman kernel in Eynard-Orantin \cite{EO07}).
\begin{definition}
The \emph{fundamental normalized differential of the second kind} (abbreviated as \emph{fundamental differential} in this paper)
associated to a Lagrangian subspace $\cA\subset H_1(\oC;\bC)$ is the symmetric meromorphic
form on $(\oC)^2$ satisfying the following conditions.
\begin{itemize}
\item The only pole is the double pole along the diagonal, i.e. given
  any local coordinate $\zeta$ near a point $p\in \oC$, the
  differential $\omega_{0,2}$ has the following form near $(p,p)\in (\oC)^2$
\[
\omega_{0,2}=\frac{d\zeta_1d\zeta_2}{(\zeta_1-\zeta_2)^2} +\text{holomorphic part}.
\]
\item It is normalized by the choice of $A$-cycles in $\cA$
$$
\int_{q\in A} \omega_{0,2}(p,q)=0\text{, $\forall A\in \cA$.}
$$
\end{itemize}
\end{definition}

\begin{definition}
A spectral curve $\cC=(C, \cA)$ consists of the following data:
\begin{itemize}
\item a smooth affine algebraic curve $C$ in $(\bC^*)^2$ where the coordinate functions $\hX,\hY$ are holomorphic Morse;
\item the compactification $\oC$ of $C$ as a smooth projective curve;
\item a choice of Lagrangian subspace $\cA\subset H_1(\oC;\bC)$;
\item a fundamental normalized differential of the second kind $B$ on $\oC$ with respect to such choice of $\cA$.
\end{itemize}
\end{definition}

Fix a spectral curve $\cC$. Since $\hX$ (and then $\hx$) is Morse, the
critical points ($d\hx=0$) form a finite set $\{p_\bsi: \bsi \in
I_\cC\}$ -- here $I_\cC$ is the index set for the ramification points on $C$. Define the
Liouville form $\Phi=\hy d\hx=-\log Y d\hx$. It is a well-defined holomorphic
form on $\tC$.

At each critical point $p_\bsi$, we define the local
coordinate $\zeta_\bsi$ by
\[
\hx=\zeta_\bsi^2+\hx_{0,\bsi},
\]
where $\hx_{0,\bsi}$ is the value of $\hx$ at $p_\bsi$ (well-defined up to an integral multiple of $2\pi\sqrt{-1}$). For any $p$ near $p_\bsi$, let $\bar p$ be the point on $C$ such that
$\zeta_\bsi(\bar p)=-\zeta_\bsi(p)$. We also denote $\hy(p_\bsi)=\hy_{0,\bsi}$, and
\[
\hy=\hy_{0,\bsi}+\sum_{i=d}^\infty h^\bsi_d \zeta_\bsi^d.
\]
By the smoothness of the curve $C$, $h^\bsi_1 \neq 0$ for all $\bsi$. Here $\hat y_{0,\bsi}$ is also well-defined up to an integral multiple of $2\pi\sqrt{-1}$.

\subsection{Eynard-Orantin's topological recursion}

\begin{definition}
\label{def:EO-recursion}
Given a spectral curve $\cC$, the Eynard-Orantin recursive algorithm defines a sequence of symmetric
meromorphic forms $\omega_{g,n}$ on $(\oC)^n$ for $g\in \bZ_{\geq 0},
n\in \bZ_{>0}$ as follows.
\begin{itemize}
\item Initial conditions:
\[
\omega_{0,1}=\Phi=\hat y d \hat x,\quad \text{$\omega_{0,2}$ given by the choice of $\cA$}.
\]
\item Recursive algorithm:
\begin{align*}
\omega_{g,n}(p_1,\dots,p_n)=\sum_{p' \in I_\cC} \Res_{p=p'}
\frac{\int_{\xi=p}^{\bar p}B(p_n, \xi)}{2(\Phi(p)-\Phi(\bar p))} \big
  ( \omega_{g-1,n+1}(p,\bar p,p_1,\dots,p_{n-1})
  \\+\sum^\prime_{g_1+g_2=g,\ I\sqcup J=\{1,\dots,n-1\}}\omega_{g_1,|I|+1}(p,
p_I)\omega_{g_2,|J|+1}(\bar p, p_J)\big ).
\end{align*}
Here the sum symbol $\overset{\prime}{\sum}$ excludes the case $(g_1,|I|)=(0,1),(0,n-1),(g,1)$ or $(g,n-1)$.
\end{itemize}
\end{definition}
The resulting $\omega_{g,n}$ are well-behaved.
\begin{proposition}[Appendix A of \cite{EO07}]
When $2g-2+n>0$, the poles of $\omega_{g,n}$ are at $d\hx_i=0$ (critical points), where
$\hx_i=-\log \hX_i$ is the $\hx$-coordinate on $i$-th copy of $C^n$.
\end{proposition}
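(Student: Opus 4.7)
I would proceed by induction on the integer $2g-2+n>0$. The base cases $(g,n)=(0,3)$ and $(g,n)=(1,1)$ are handled by direct inspection of Definition \ref{def:EO-recursion}: in both cases the only dependence of the integrand on the free arguments $p_1,\ldots,p_n$ is through $B(p_n,\xi)$ and (in the $(0,3)$-case) through $\omega_{0,2}(p,p_i)$, and since $\omega_{0,2}$ has poles only along the diagonal, after the residue $\Res_{p=p'}$ the resulting form has poles in each $p_i$ only at the critical points.

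For the inductive step, fix $(g,n)$ with $2g-2+n\geq 2$ and assume the claim for all $(g',n')$ with $0<2g'-2+n'<2g-2+n$. I would analyze the right-hand side variable by variable. The variable $p_n$ enters only through the kernel
$$K(p_n,p):=\int_{\xi=p}^{\bar p}B(p_n,\xi),$$
whose only singularities in $p_n$ are at $p_n=p$ and $p_n=\bar p$ (inherited from the double pole of $B$ at $\xi=p_n$ at the endpoints of the integration contour); after $\Res_{p=p'}$ both endpoints coincide with the critical point $p'$, so the poles in $p_n$ lie on the critical locus. The variables $p_1,\ldots,p_{n-1}$ enter only through the factors $\omega_{g-1,n+1}(p,\bar p,p_1,\ldots,p_{n-1})$ and $\omega_{g_1,|I|+1}(p,p_I)\,\omega_{g_2,|J|+1}(\bar p,p_J)$; once the unstable $\omega_{0,1}$ contributions are excluded by the prime on the sum, each such factor has strictly smaller $2g'-2+n'$, so by induction has poles only at critical points in each of its arguments. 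Hence no new poles are introduced in $p_1,\ldots,p_{n-1}$.

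Next I would verify that the remaining factor $1/(2(\Phi(p)-\Phi(\bar p)))$ contributes nothing to the pole locus in the external variables. This factor is independent of $p_1,\ldots,p_n$; in the local coordinate $\zeta$ at a critical point $p'$ (with $\hat x=\zeta^2+\hat x_{0,\bsi}$), a direct expansion gives
$$\Phi(p)-\Phi(\bar p)=4h_1^\bsi\,\zeta^2\,d\zeta+O(\zeta^4\,d\zeta),$$
so this factor has a double pole at $p=p'$ which is precisely what the residue extracts; it carries no dependence on $p_1,\ldots,p_n$.

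The step I expect to be most delicate is the control of the kernel $K(p_n,p)$. A priori $\int_{\xi=p}^{\bar p}B(p_n,\xi)$ depends on the homotopy class of the path from $p$ to $\bar p$ (well-defined modulo $\cA$-periods of $B$, which vanish by the normalization), and as $p_n$ varies on $\oC$ the naive integrand acquires a singularity when $p_n$ crosses the contour. The principal technical point is to verify that the meromorphic extension in $p_n$ genuinely has poles only at $p_n=p$ and $p_n=\bar p$ on $\oC$, so that after the residue $\Res_{p=p'}$ the sole poles in $p_n$ are at critical points. Once this is in hand, the induction closes and the proposition follows.
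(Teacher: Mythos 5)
The paper does not prove this proposition; it is attributed to Appendix~A of \cite{EO07}, and your inductive scheme is indeed the outline of the argument there. However, there is a genuine gap in your inductive step. You assert that, once the terms containing $\omega_{0,1}$ are excluded by $\sum'$, ``each such factor has strictly smaller $2g'-2+n'$, so by induction has poles only at critical points in each of its arguments.'' This is false: the products $\omega_{g_1,|I|+1}(p,p_I)\,\omega_{g_2,|J|+1}(\bar p,p_J)$ still contain terms in which one factor is $\omega_{0,2}$, for which $2g'-2+n'=0$, so the induction hypothesis does not cover it. Already for $(g,n)=(0,4)$ the recursion produces a term such as $\omega_{0,2}(p,p_1)\,\omega_{0,3}(\bar p,p_2,p_3)$. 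The unstable form $\omega_{0,2}(p,p_1)$ has its double pole along the diagonal $p_1=p$, not at a critical point, so the conclusion ``hence no new poles are introduced in $p_1,\ldots,p_{n-1}$'' is not true as stated: a new pole \emph{is} introduced.

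What rescues the proposition is exactly the pole-collision mechanism you spell out for the kernel $K(p_n,p)$. The pole of the integrand in the integration variable $p$ coming from $\omega_{0,2}(p,p_i)$ sits at $p=p_i$; it contributes to $\Res_{p=p'}$ only when $p_i$ collides with $p'$, so the residue acquires a pole in $p_i$ only at $p_i=p'$, a critical point (and similarly for $\omega_{0,2}(\bar p,p_j)$ via $p_j=\bar p$). You apply this reasoning to $K(p_n,p)$, hence to $p_n$, but not to the unstable $\omega_{0,2}$ factors, hence not to $p_1,\ldots,p_{n-1}$. These factors need to be handled separately, by the same argument as your base case $(0,3)$, before the induction closes. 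A related minor point worth stating explicitly is that $\omega_{g-1,n+1}(p,\bar p,p_1,\ldots,p_{n-1})$ also has (possibly high-order) poles at $p=p'$, but those pole locations are fixed and independent of the external variables, so they enter the residue without creating new poles in $p_1,\ldots,p_{n-1}$.
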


\begin{remark}
  The original recursion algorithm in \cite{EO07} sets $\omega_{0,1}=0$ while it does not exclude these four special cases by a special sum symbol $\sum'$ -- it is the same as the recursion here. Another different convention is to introduce a minus sign in the recursion kernel, i.e. using $2(\Phi(\bar p)-\Phi(p))$ in the denominator. Adopting this convention is equivalent to changing all $\omega_{g,n}$ to $(-1)^{g-1} \omega_{g,n}$. We stick to the convention in Definition \ref{def:EO-recursion} throughout this paper.
\end{remark}

\subsection{Differential forms on the spectral curve}

Given a spectral curve $\cC=(C,\cA)$, for each ramification point $p_\bsi,\ \si\in I_\cC$, we associate a path $\gamma_\bsi$ as the Lefschetz thimble
\[
\hx(\gamma_\bsi)=[\hx_{0,\bsi},+\infty)
\]

Following \cite{E11, EO15}, given any $\bsi \in I_\cC$ and $d\in \bZ_{\geq 0}$, define
$$
\theta_{\bsi}^d(p):= (2d-1)!! 2^{-d}\Res_{p'\to p_{\bsi}}
\omega_{0,2}(p,p')\zeta_\bsi^{-2d-1}.
$$
Then $\theta_{\bsi}^d$ satisfies the following properties.
\begin{enumerate}
\item $\theta_{\bsi}^d$ is a meromorphic 1-form on $\oC$ with
a single pole of order $2d+2$ at $p_{\bsi}$.

\item In local coordinate $\zeta_{\bsi} =\sqrt{ \hat{x}-\hat x_{0,\si}}$ near $p_{\bsi}$,
$$
\theta_{\bsi}^d = \Big( \frac{-(2d+1)!!}{2^d \zeta_\bsi^{2d+2}}
+ f(\zeta_\bsi)\Big) d\zeta_\bsi,
$$
where $f(\zeta_\bsi)$ is analytic around $p_{\bsi}$.
The residue of $\theta^0_{\bsi}$ at $p_{\bsi}$ is zero,
so $\theta^d_\bsi$ is a differential of the second kind.
\item
\[
\int_{A}\theta_\bsi^d=0,\ \forall A\in \cA
\]
\end{enumerate}
The meromorphic 1-form $\theta_{\bsi}^d$ is characterized by the above
properties; $\theta_{\bsi}^d$ can be viewed as a section in
$H^0(\oC, \omega_{\oC}((2d+2) p_{\bsi})
)$. 

\begin{remark}\label{sign}
The meromorphic 1-form $\theta_{\bsi}^d$ corresponds to $d\xi_{\bsi}^d$ in \cite{E11} and \cite{E14}. However there is a sign error in \cite[equation (4.7)]{E14}: there is no minus sign on the right hand side. The expansion in \cite[equation (4.8)]{E14} is consistent with (2) in the above properties of $\theta_{\bsi}^d$. The expansion in \cite[equation (4.8)]{E14} will be consistent with \cite[equation (4.7)]{E14} after correcting this sign error.

Besides, we use the notation $\theta_{\bsi}^d$ instead of $d\xi_{\bsi}^d$ since in general $\theta_{\bsi}^d$ is not an exact form.

\end{remark}

\subsection{Graph sum formula from Dunin-Barkowski--Orantin--Shardin--Spitz \cite{DOSS}}

Following \cite{DOSS}, the B-model invariants $\omega_{g,n}$ are expressed in terms of graph sums. We first introduce some notation.
\begin{itemize}
\item  For any $\bsi \in I_{\Sigma}$, we define
\begin{equation}
\check{h}^{\bsi}_{k} :=\frac{(2k-1)!!}{2^{k-1}}h^\bsi_{2k-1}.
\end{equation}
Then by expanding $\hat x$ and compute the integral term-by-term (for $u>0$)
$$
\check{h}^{\bsi}_k = [u^{1-k}]\frac{u^{3/2}}{\sqrt{\pi}} e^{u\hat{x}_{0,\bsi}}
\int_{p\in \gamma_{\bsi}}e^{-u \hat{x}(p)}\Phi(p).
$$

\item For any $\bsi,\bsi'\in I_\cC$, we expand
$$
\omega_{0,2}(p_1,p_2) =\Big( \frac{\delta_{\bsi,\bsi'}}{ (\zeta_\bsi-\zeta_{\bsi'})^2}
+ \sum_{k,l\in \bZ_{\geq 0}} B^{\bsi,\bsi'}_{k,l} \zeta_\bsi^k \zeta_{\bsi'}^l \Big) d\zeta_\bsi d\zeta_{\bsi'},
$$
near $p_1=p_{\bsi}$ and $p_2=p_{\bsi'}$, and define
\begin{equation}\label{eqn:BcheckB}
\check{B}^{\bsi,\bsi'}_{k,l} := \frac{(2k-1)!! (2l-1)!!}{2^{k+l+1}} B^{\bsi,\bsi'}_{2k,2l}.
\end{equation}
Then
\begin{eqnarray*}
\check{B}^{\bsi,\bsi'}_{k,l}&=&[u^{-k}v^{-l}]\left(\frac{uv}{u+v}(\delta_{\bsi,\bsi'}
-\sum_{\bgamma\in I_\cC} f^{\ \bsi}_\bgamma(u)f^{\ \bsi'}_\bgamma(v))\right)\\
&=&[z^{k}w^{l}]\left(\frac{1}{z+w}(\delta_{\bsi,\bsi'}
-\sum_{\bgamma\in I_\cC} f^{\ \bsi}_\bgamma(\frac{1}{z})f^{\ \bsi'}_\bgamma(\frac{1}{w}))\right).
\end{eqnarray*}
\end{itemize}

Given a labeled graph $\vGa \in \bGa_{g,n}(I_\cC)$ with
$L^o(\Ga)=\{l_1,\ldots,l_n\}$, and $\vs=(s_1,\dots,s_n)\in \oC^n$, 
we define its weight to be
\begin{eqnarray*}
w_B^{\vs}(\vGa)&=& (-1)^{g(\vGa)-1}\prod_{v\in V(\Gamma)} \Big(\frac{h^{\balpha(v)}_1}{\sqrt{-2}}\Big)^{2-2g(v)-\val(v)} \langle \prod_{h\in H(v)} \tau_{k(h)}\rangle_{g(v)}
\prod_{e\in E(\Gamma)} \check{B}^{\balpha(v_1(e)),\balpha(v_2(e))}_{k(e),l(e)} \\
&& \cdot \prod_{l\in \cL^1(\Gamma)}(\check{\cL}^1)^{\balpha(l)}_{k(l)}
\prod_{j=1}^n (\check{\cL}^\vs_B)^{\balpha(l_j)}_{k(l_j)}(l_j)
\end{eqnarray*}
where
\begin{itemize}
\item (dilaton leaf)
$$
(\check{\cL}^1)^{\bsi}_k = -\frac{1}{\sqrt{-2}}\check{h}^{\bsi}_k.
$$
\item (descendant leaf)
$$
(\check{\cL}^\vs_B)^{\bsi}_k(l_j) =  \frac{-1}{\sqrt{-2}} \theta_{\bsi}^k(p_j)
$$
\end{itemize}
Notice that $\check B^{\bsi,\bsi'}_{k,l}$ plays the role of the edge contribution, while the vertex contribution is $(-1)^{g(\vGa)-1}\prod_{v\in V(\Gamma)} \Big(\frac{h^{\balpha(v)}_1}{\sqrt{-2}}\Big)^{2-2g(v)-\val(v)} \langle \prod_{h\in H(v)} \tau_{k(h)}\rangle_{g(v)}$.

In our notation \cite[Theorem 3.7]{DOSS} is equivalent to:
\begin{theorem}[Dunin-Barkowski--Orantin--Shadrin--Spitz \cite{DOSS}] \label{thm:DOSS}
For $2g-2+n>0$,
$$
\omega_{g,n}(s_1,\dots,s_n) = \sum_{\Gamma \in \bGa_{g,n}(I_\cC)}\frac{w_B^{\vs}(\vGa)}{|\Aut(\vGa)|}.
$$
\end{theorem}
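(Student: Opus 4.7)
The plan is to proceed by induction on the Euler characteristic $2g-2+n$ using the Eynard-Orantin recursion of Definition \ref{def:EO-recursion}. The base cases $(g,n)=(0,3)$ and $(1,1)$ can be verified directly: applying the recursion kernel once to $\omega_{0,2}\otimes\omega_{0,2}$ or to $\omega_{0,2}$ on the diagonal produces residues at each ramification point that match the unique stable graphs with $2g-2+n=1$, after carrying out the local expansions described below.

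For the inductive step, the key is to expand the recursion kernel
\[
K(p,p_n):=\frac{\int_{\xi=p}^{\bar p}B(p_n,\xi)}{2(\Phi(p)-\Phi(\bar p))}
\]
at each ramification point $p_\bsi$ in the local coordinate $\zeta_\bsi$. The numerator, viewed via $B=\omega_{0,2}$, expands into terms producing the descendant-leaf differentials $\theta^k_\bsi(p_n)$ together with the edge constants $\check B^{\bsi,\bsi'}_{k,l}$ from \eqref{eqn:BcheckB}. The denominator expands as $2 h^\bsi_1 \zeta_\bsi\, d\hat x + O(\zeta_\bsi^3)\, d\hat x$; inverting, the leading factor $(h^\bsi_1)^{-1}$ yields the vertex weight $\bigl(h^\bsi_1/\sqrt{-2}\bigr)^{2-2g(v)-\val(v)}$ after collecting all contributions incident at a given inductive vertex, while the higher coefficients $\check h^\bsi_k$ generate the dilaton leaves $(\check\cL^1)^\bsi_k$. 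Substituting the inductive hypothesis (that each $\omega_{g',n'}$ on the right-hand side of the recursion is already a graph sum) and collecting terms, one sees that the effect of taking the residue at $p_\bsi$ is combinatorially to attach a new vertex labeled $\bsi$ to the pre-existing graphs via one or two half-edges, covering the three cases (edge splitting, genus drop, loop attachment) in the recursion.

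The main obstacle — the technical heart of the theorem — is identifying the iterated vertex contribution with the intersection $\langle\prod_{h\in H(v)}\tau_{k(h)}\rangle_{g(v)}$ on $\Mbar_{g(v),\val(v)}$. The topological recursion is a priori a purely residue-theoretic construction on $\oC$ with no reference to $\Mbar_{g,n}$; the bridge is that the local model at any simple ramification point is the Airy curve $\hat y^2=\hat x$, for which Kontsevich's theorem identifies the recursion output with $\psi$-class integrals, and equivalently the local data satisfies the DVV/Virasoro constraints which mirror the recursive cutting of a trivalent-or-higher vertex in a stable graph. Globally, the pieces $\check B$, $\check h^\bsi_d$, and $\theta^k_\bsi$ assemble into a Givental-type $R$-matrix action on this trivial (Airy/Kontsevich) cohomological field theory, and the graph sum of Theorem \ref{thm:DOSS} is exactly Givental's graph formula for that action. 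Once this dictionary is in place, matching the global sign $(-1)^{g(\vGa)-1}$ and tracking the $\sqrt{-2}$ factors through each inductive step is routine bookkeeping, and the induction closes.
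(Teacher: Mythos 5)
The paper offers no proof of Theorem \ref{thm:DOSS}: it states the result as a restatement, up to the convention changes recorded in the remark that follows, of \cite[Theorem 3.7]{DOSS}, and the next subsection only checks consistency with Eynard's earlier formula \cite[Theorem 5.1]{E11}. Your sketch is a plausible outline of a genuine proof, and it follows essentially the strategy of \cite{E11} and \cite{DOSS}: induct on $2g-2+n$ via the recursion kernel, expand near each ramification point to isolate the constants $\check{h}^{\bsi}_{k}$, $\check{B}^{\bsi,\bsi'}_{k,l}$, and forms $\theta^k_\bsi$, and identify the local contribution with the Airy-curve/Kontsevich--Witten data so that the whole assembly is recognized as a Givental $R$-matrix acting on the trivial CohFT. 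You correctly name that identification as the technical crux.

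Two details need care before this becomes a proof. First, there is a factor-of-two slip: since $\hat y(p)-\hat y(\bar p)=2h^\bsi_1\zeta_\bsi+O(\zeta_\bsi^3)$, the kernel denominator is $2(\Phi(p)-\Phi(\bar p))=4h^\bsi_1\zeta_\bsi\,d\hat x+O(\zeta_\bsi^3)\,d\hat x$, not $2h^\bsi_1\zeta_\bsi\,d\hat x+O(\zeta_\bsi^3)\,d\hat x$. Second, each application of the recursion kernel produces a single \emph{trivalent} vertex, whereas the target graph sum has vertices of arbitrary valency weighted by $\langle\prod_{h\in H(v)}\tau_{k(h)}\rangle_{g(v)}$ on $\Mbar_{g(v),\val(v)}$; the combinatorial passage from chains of iterated trivalent vertices (sharing the same ramification label) to a single high-valency vertex weighted by $\psi$-class integrals is exactly the nontrivial content of the Airy/Kontsevich local model and the DVV/Virasoro identities, so it should be spelled out rather than merely invoked in passing.
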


\begin{remark}
Our convention for the factors in the above graph sum formula is different from that in \cite[Theorem 3.7]{DOSS}. We summarize the following convention differences.
\begin{enumerate}
\item Our $\check{h}^{\bsi}_{k}$ is $\frac{1}{2^k}$ times the $\check{h}^{\bsi}_{k}$ in \cite[Theorem 3.7]{DOSS}.\\

\item Our $\check{B}^{\bsi,\bsi'}_{k,l}$ is $\frac{1}{2^{k+l+1}}$ times the $\check{B}^{\bsi,\bsi'}_{k,l}$ in \cite[Theorem 3.7]{DOSS}.\\

\item Our $\theta_{\bsi}^k$ is $\frac{1}{2^k}$ times the $d\xi^{\bsi}_k$.

\end{enumerate}
\end{remark}

\subsection{An equivalent graph sum formula from Eynard}
In \cite[Theorem 5.1]{E11}, Eynard obtains a graph sum formula for $\omega_{g,n}$ on a general spectral curve. In this subsection, we show that this graph sum formula is equivalent to the graph sum formula in \cite[Theorem 3.7]{DOSS} by direct computation.

The formula in \cite[Theorem 5.1]{E11} sums over all the stable degeneracies of the moduli space $\cM_{g,n}$. So by the dual graph of a stable curve, this is equivalent to summing over the graphs in $\bGa_{g,n}(I_\cC)$. The ordinary leaf term in our graph sum formula matches the factor $d\xi$ in \cite[Theorem 5.1]{E11} up to the factor $\frac{-1}{\sqrt{-2}}$ (But we should be careful with the sign problem in $d\xi$ see Remark \ref{sign}). The $\hat B$ factor in \cite[Theorem 5.1]{E11} corresponds to our $\check{B}^{\bsi,\bsi'}_{k,l}$ which appears in the edges factor. The $\hat B$ factor which appears directly in the graph sum formula in \cite[Theorem 5.1]{E11} corresponds to the loop factor i.e. an edge connecting one vertex. The correlator in \cite[equation (5.1)]{E11} involves the $\kappa$ classes by \cite[equation (5.4)]{E11}. The second factor in \cite[equation (5.4)]{E11} gives us the factors for an edge which connects two different vertices.

The only nontrivial factor is the first factor in \cite[equation (5.4)]{E11} which involves the $\kappa$ classes and $\tilde{t}_{\bsi,k}$ which is called the dual time in \cite{E11}. By \cite[equation (6.7)]{E11}, we have
\begin{equation}\label{dual time}
e^{-\sum_{k=0}^{\infty}\tilde{t}_{\bsi,k} u^{-k}}=2\sum_{k=0}^{\infty}\check{h}_{k+1}^\bsi u^{-k}.
\end{equation}
In particular, $e^{-\tilde{t}_{\bsi,0}}=2\check{h}_{1}^\bsi$. Notice that $\kappa_0=2g(v)-2+\val(v)$ on $\Mbar_{g(v),\val(v)}$ and so we have

$$
e^{\sum_{k=0}^{\infty}\tilde{t}_{\bsi,k} \kappa_k}=(2\check{h}_{1}^\bsi)^{(2-2g(v)-\val(v))}e^{\sum_{k=1}^{\infty}\tilde{t}_{\bsi,k} \kappa_k}.
$$
The first factor on the right hand side is consistent with our vertex factor up to the power of 2 and roots of unity. For the second factor, we apply (\ref{dual time}) and \cite[Lemma 2.2]{LX09} and it is easy to see that this will give us the dilaton leaf factor and the correlator in our vertex factor. In the end, one only needs to notice the identity
$$
\sum_{v\in\Gamma}(2g(v)-2+\val(v))=2g-2+n
$$
to match the factors involving powers of 2 and the roots of unity in \cite[Theorem 5.1]{E11} and in Theorem \ref{thm:DOSS}.

\subsection{B-model open potentials from the Eynard-Orantin's recursion}

For any $q\in \fB^\circ$, the mirror curve $C_{q}$ comes with a compactification $\oC_{q}\subset \bS_P$. The images of $A_1,\dots, A_\fp\in H_1(C_q;\bC)$ in $H_1(\oC_q;\bC)$ under the map $H_1(C_q;\bC)\to H_1(\oC_q;\bC)$ span a Lagrangian subspace in $H_1(\oC_q;\bC)$. We also have $\hat X$ and $\hat Y$ as two holomorphic Morse functions on $C_q$ and they are meromorphic on $\oC_q$. Thus this is a spectral curve for $q\in \cB^\circ$, denoted by $\cC_q$.

Let $\rho_q^\ell: D_\delta \hookrightarrow D^\ell_q$ be an embedding of a small disk $\{|\underline X|<\delta\}$ into $D^\ell_q$ by mapping $\underline X$ to the point whose $\hat X$-coordinate is $\underline X$, while $\rho_q^{\ell_1,\dots,\ell_n}=\rho_q^{\ell_1}\times\dots\times \rho_q^{\ell_n}: (D_\delta)^n \to D^{\ell_1}_q\times \dots\times D^{\ell_n}_q\subset (\oC_{q})^n$. The Eynard-Orantin's topological recursion (Definition \ref{def:EO-recursion}) produces symmetric meromorphic forms $\omega_{g,n}$ on $(\oC_q)^n$. From them we define the B-model open potentials as below.

\begin{enumerate}
  \item (disk invariants) When $q=0$, $\hat Y(\bar p_\ell)^\fm=-1$ for all $\ell$.  On each $D^\ell_q$, $\hat Y$ is a holomorphic function. Since $D^\ell_q$ is very small, the real part of $\hat Y$ on it is still negative -- so we can choose a branch of logarithm $\log:\bC\setminus [0,\infty)\to \bC$, and define a function $\hat y^\ell$ on $D^\ell_q$
  \[
  \hat y^\ell=-\log \hat Y.
  \]
  The function $\hat y^\ell-\hat y^\ell(\bar p^\ell)$ does not depend on the choice of logarithm. So define
  \[
  \check F_{0,1}(q;\hat X)=-\sum_{\ell\in \bmu^*_\fm} \int_{0}^{\hat X} (\rho^\ell_q)^* (\hat y^\ell-\hat y^\ell(\bar p_\ell)) \frac{dX'}{X'}\psi_\ell.
  \]

  \item (annulus invariants) The meromorphic form $\omega_{0,2}$ is not holomorphic on $D_q\times D_q\subset \oC_q\times \oC_q$. One removes the singular part, and defines the following
  \[
  \check F_{0,2}(q;\hX_1,\hX_2):=\sum_{\ell_1,\ell_2\in \bmu^*_\fm} \int_0^{\hat X_1}\int_{0}^{\hat X_2}\left( (\rho_q^{\ell_1,\ell_2})^*\omega_{0,2}-\frac{dX_1'dX_2'}{(X_1'-X_2')^2}\right)\psi_{\ell_1}\otimes\psi_{\ell_2}.
  \]
  \item (stable cases: $2g-2+n>0$) $(\rho_q{\ell_1,\dots,\ell_n})^*\omega_{g,n}$ is holomorphic on $(D_q)^n$. We define
  \[
  \check F_{g,n}(q;\hX_1,\dots,\hX_n):=\sum_{\ell_1,\dots,\ell_n\in \bmu^*_\fm}\int_{0}^{\hX_1}\dots \int_{0}^{\hX_n}(\rho_q^{\ell_1,\dots,\ell_n})^*\omega_{g,n}\psi_{\ell_1}\otimes\dots\otimes\psi_{\ell_n}.
  \]
\end{enumerate}

\section{Comparing the graph sums: proving the Remodeling Conjecture}
\label{sec:comparing}

In this section we survey the proof from \cite{FLZ3} on how to match the graph sums. The key idea is that \emph{graph sum ingredients are genus $0$ information, and the genus $0$ open-closed mirror theorem can be used to match them}.

Throughout this section we use the $1$-dimensional torus $\bT'_f$ for all equivariant cohomology.

\subsection{The statement of open mirror theorems (BKMP Remodeling Conjecture)}

There is an open mirror map
\begin{equation}
  \label{eqn:open-mirror-map}
\tX=\hX(1+O(q)).
\end{equation}
We do not give its explicit formula here, which can be directly written down since it is a solution to certain GKZ system with a prescribed asymptotic behavior \cite{LM} (see also \cite{FLT} for orbifolds). We call Equation \eqref{eqn:closed-mirror-map}  and \eqref{eqn:open-mirror-map} the \emph{open-closed mirror map}.

\begin{remark}
  This open mirror map is the same as Equation \eqref{eqn:open-mirror-map-integral}, which is the geometric origin of \eqref{eqn:open-mirror-map}. One does not need this fact to prove the related mirror symmetry statements, such as for disk invariants and higher-genus (BKMP).
\end{remark}

The full genus mirror symmetry statements are the following
\begin{theorem}
  Under the open-closed mirror map,
  \begin{itemize}
    \item (Aganagic-Klemm-Vafa's conjecture on disk invariants, the general case proved in \cite{FLT})
    \begin{equation}
      \label{eqn:disk-mirror-theorem}
    \check F_{0,1}(q;\hat X)=|G_0|F_{0,1}^{\cX,(\cL,f)}(\btau,\tX);
    \end{equation}
    \item (BKMP's Remodeling Conjecture)
    \begin{itemize}
      \item (Annulus invariants)\[\check F_{0,2}(q;\hat X_1,\hat X_2)=-|G_0|^2 F_{0,2}^{\cX,(\cL,f)}(\btau;\tX_1,\tX_2);\]
      \item (Stable cases) For $2g-2+n>0$,
      \[\check F_{g,n}(q;\hat X_1,\dots,\hat X_n)=|G_0|^n (-1)^{g-1+n} F_{g,n}^{\cX,(\cL,f)}(\btau;\tX_1,\dots, \tX_n).\]
    \end{itemize}
  \end{itemize}
\end{theorem}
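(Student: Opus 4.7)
The plan is to compare, term by term, the A-model graph sum in Theorem \ref{tF-graph-sum} expressing $\tF_{g,n}^{\cX,(\cL,f)}$ with the B-model graph sum in Theorem \ref{thm:DOSS} expressing $\omega_{g,n}$ (which upon integration and passage to the basis $\{\psi_\ell\}$ produces $\check F_{g,n}$). Both sums range over $\bGa_{g,n}$ with vertex labels in a common index set, since the genus zero mirror theorem (Section \ref{sec:identification-frobenius}) puts the critical points $P_\balpha$ of $W^{\bT'}$ in bijection with the canonical basis of $QH^*_{\bT'}(\cX)$, identifying $I_\cC$ with $I_\Si$. So it suffices to identify each of the four local factors (vertex, edge, dilaton leaf, ordinary leaf) pointwise, after applying the open-closed mirror maps \eqref{eqn:closed-mirror-map} and \eqref{eqn:open-mirror-map}; the global scalar and sign $|G_0|^n(-1)^{g-1+n}$ will then emerge from counting half-edges and ordinary leaves.

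First I would match the vertex weights. The A-side carries $(\sqrt{\Delta^{\bsi(v)}(\btau)})^{2g(v)-2+\val(v)}$ and the B-side $(h_1^{\balpha(v)}/\sqrt{-2})^{2-2g(v)-\val(v)}$, both multiplied by the same tautological integral $\langle\prod\tau_{k(h)}\rangle_{g(v)}$. These are reciprocal powers of the same quantity by Equation \eqref{eqn:length-matching}, itself a direct consequence of the Frobenius-algebra isomorphism \eqref{eqn:identification-frobenius}. The identity $\sum_v(2g(v)-2+\val(v))=2g-2+n$ absorbs these powers into the global sign and into the prefactors $|G_0|^n$ via the $1/\sqrt{-2}$ distributed over the $2|E(\Ga)|+|L^1(\Ga)|+n$ half-edges and leaves.

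The heart of the argument, and the main obstacle, is the identification of the A-model $R$-matrix of Theorem \ref{R-matrix} with the B-model ``$R$-matrix'' $(f^\bsi_\bgamma(z))$ implicit in the edge kernel $\check B^{\bsi,\bsi'}_{k,l}$ and in the dilaton leaf weight $\check h^{\bsi}_k$. Both are unitary fundamental solutions to the same quantum differential equation (after the genus zero mirror theorem transports the A-model Frobenius structure onto $\Jac(W^{\bT'})$), so by uniqueness they agree up to right multiplication by a constant symplectic matrix. The A-model constant is computed via Tseng's orbifold quantum Riemann-Roch \cite{Ts10} in the form of \eqref{eqn:R-at-zero}; the B-model constant is computed by direct stationary-phase expansion of the oscillatory integral defining $\check h^\bsi_k$ and of $\omega_{0,2}$ near each ramification point. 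A case-by-case comparison at the large radius/large complex structure limit shows these constants coincide. Granting this, the edge factors $\cE^{\bsi,\bsi'}_{k,l}$ match $\check B^{\bsi,\bsi'}_{k,l}$ verbatim, and the dilaton leaf factors $(\cL^1)^{\bsi}_k$ match $(\check\cL^1)^{\bsi}_k$.

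For the ordinary leaves, the A-side weight \eqref{eqn:u-leaf}/\eqref{eqn:O-leaf} applied to $\txi^\bsi(z,\tX)$ must match $(\check\cL^{\vs}_B)^{\bsi}_k(l_j)=-\theta^k_\bsi(p_j)/\sqrt{-2}$ after the open mirror map and integration against $dX_j/X_j$ followed by projection onto the $\psi_\ell$ basis. I would reduce this to the disk potential identity \eqref{eqn:disk-mirror-theorem} of \cite{FLT}: expanding $\check F_{0,1}$ in a small neighborhood of each $\bar p_\ell$ extracts the generating function $\sum_k \theta^k_\bsi\, z^k$, while Proposition \ref{prop:open-descendant}(1) expresses $\tF_{0,1}^{\cX,(\cL,f)}$ in terms of $\txi^\alpha(z,\tX)$ contracted with the $\cS$-operator. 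The comparison then becomes an identity of $\cS$- and $R$-valued objects that follows from the already established $R$-matrix equality and the factorization \eqref{eqn:two-in-one}. The factor of $|G_0|$ per leaf comes precisely from \eqref{eqn:disk-mirror-theorem}, producing $|G_0|^n$ in total, and the sign $(-1)^{g-1+n}$ is bookkept from $(-1)^{g(\vGa)-1}$ in $w_B^{\vs}$ combined with the $-1/\sqrt{-2}$ in each leaf factor. With all four ingredients matched, the theorem follows by summing over $\bGa_{g,n}$.
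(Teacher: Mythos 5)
Your proposal follows the paper's own strategy almost step for step: identify $I_\cC$ with $I_\Si$ via the genus-zero mirror theorem, match vertices by the length of the canonical basis $h_1^\balpha/\sqrt{-2}=1/\sqrt{\Delta^\balpha(\btau)}$, match edges and dilaton leaves by invoking uniqueness of the fundamental solution to the QDE together with the constant term fixed at $q=0$ by orbifold quantum Riemann--Roch on the A-side and direct computation on the B-side, and match open leaves by reducing to the disk mirror theorem of \cite{FLT}. This is the same decomposition, the same key lemmas, and the same bookkeeping of $|G_0|^n(-1)^{g-1+n}$ that appear in Section \ref{sec:comparing}.
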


\subsection{Graph sum components: vertices}
\label{sec:vertices}

As discussed in Section \ref{sec:identification-frobenius}, the identification of Frobenius algebras implies that the length of the canonical basis matches, as in Equation \eqref{eqn:length-matching}
\[
\frac{h^\balpha_1}{\sqrt{-2}}=\frac{1}{\sqrt{\Delta^\balpha(\btau)}}.
\]
This equates the A-model vertex contribution
\[
\left(\frac{1}{\sqrt{\Delta^{\balpha(v)}(\btau))}}\right)^{2-2g(v)-\val(v)}\vert_{Q=1}\langle \prod_{h\in H(v)} \tau_{k(h)}\rangle_{g(v)}
\]
with the B-model vertex contribution
\[
\left(\frac{h_1^{\balpha(v)}}{\sqrt{-2}}\right)^{2-2g(v)-\val(v)}\langle \prod_{h\in H(v)} \tau_{k(h)}\rangle_{g(v)}
\]

\subsection{Oscillatory integrals and $\check R$-matrices}

The functions $V_\balpha$ are canonical basis for the Landau-Ginzburg B-model $((\bC^*)^3,W^{\bT'})$.

The roles of the meromorphic $1$-forms $\theta_\balpha^0$ are similar to the canonical basis $V_\balpha$, in the following sense.
\begin{proposition}[Dimensional reduction]
  \label{prop:dr}
  For any $\balpha\in I_\Si$ (i.e. index sets of the canonical basis  are the same for both A and B-models since the Frobenius algebra are isomorphic), we have a $3$-cycle (non-compact) $\Gamma_\balpha\ni P_\balpha$ in $(\bC^*)^3$, and a non-compact $1$-cycle $\gamma_\balpha\ni p_\balpha$ in $C_q$ such that
  \begin{align*}
    \int_{\Gamma_\balpha} e^{-\frac{W^{\bT'}}{z}} \Omega& = 2\pi\sqrt{-1}\int_{\gamma_\balpha}e^{-\frac{\hat x}{z}} \Phi;\\
    \int_{\Gamma_\balpha} e^{-\frac{W^{\bT'}}{z}} \Vbar_{\bbeta} \Omega &= 2\pi\sqrt{-1} z^2 \int_{\gamma_\balpha} e^{-\frac{\hat x}{z}} \frac{h_1^{\bbeta}\theta^0_\bbeta}{2}.
  \end{align*}
  Here $z>0$, and  $\gamma_\balpha=\{\hat x\in [\hat x_{0,\balpha},\infty)\}$.
\end{proposition}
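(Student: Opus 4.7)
The strategy is to perform the three-dimensional oscillatory integral by integrating out two of the three variables: first the $Z$-variable via a residue, and then one more direction by Stokes' theorem, landing on the thimble $\gamma_\balpha$ on the mirror curve.

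\textbf{Integrating out $Z$.} Take the holomorphic volume form $\Omega=\frac{dX}{X}\wedge\frac{dY}{Y}\wedge\frac{dZ}{Z}$. Since $W^{\bT'}=H(X,Y)Z+\hat x$ is linear in $Z$ and $e^{-HZ/z}$ is holomorphic at $Z=0$ with value $1$, the residue theorem gives $\int_{|Z|=\epsilon}e^{-HZ/z}\frac{dZ}{Z}=2\pi\sqrt{-1}$. Choose $\Gamma_\balpha$ so that, after a continuous deformation within its homology class, it becomes a product $\tau_\balpha\times\{|Z|=\epsilon\}$ for a suitable $2$-chain $\tau_\balpha\subset(\bC^*)^2_{X,Y}$. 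Then the LHS reduces to $2\pi\sqrt{-1}\int_{\tau_\balpha}e^{-\hat x/z}\frac{dX\wedge dY}{XY}$.

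\textbf{Reduction to the mirror curve.} On the universal cover $\bC^2_{\hat x,\hat y}$ one has $\frac{dX\wedge dY}{XY}=d\hat x\wedge d\hat y$, with $\hat y$ single-valued, and
\[
e^{-\hat x/z}\,d\hat x\wedge d\hat y=-d\bigl(\hat y\,e^{-\hat x/z}d\hat x\bigr),
\]
since $d(\hat y\,e^{-\hat x/z}d\hat x)=e^{-\hat x/z}d\hat y\wedge d\hat x$. Lift $\tau_\balpha$ to $\widetilde{\tau}_\balpha=\{(\hat x,\hat y):\hat x\in[\hat x_{0,\balpha},+\infty),\ \hat y\in[0,\hat y_\balpha(\hat x)]\}$, where $\hat y_\balpha(\hat x)$ is the branch of $\hat y$ on $C_q$ specified by $\gamma_\balpha$. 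Of the four boundary pieces of $\widetilde{\tau}_\balpha$, the two constant-$\hat x$ segments contribute nothing because $d\hat x=0$, the bottom piece $\{\hat y=0\}$ contributes nothing because $\hat y=0$, and the remaining top boundary is precisely $-\gamma_\balpha$. Stokes' theorem then yields the first identity.

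\textbf{Second identity.} Choose a lift of $\Vbar_\bbeta$ to a Laurent polynomial in $X,Y$ alone; this is possible because $H=\partial_Z W^{\bT'}$ lies in the Jacobian ideal, so modulo this ideal every class admits a $Z$-independent representative. The $Z$-integration proceeds identically, producing $2\pi\sqrt{-1}\int_{\tau_\balpha}e^{-\hat x/z}\Vbar_\bbeta\,d\hat x\wedge d\hat y$. To extract the factor $z^2$ on the RHS, apply Stokes twice in the $\hat x$-direction: each application of the identity $e^{-\hat x/z}d\hat x=-z\,d(e^{-\hat x/z})$ trades a $\hat x$-primitive for a factor of $z$ and eventually produces a boundary $1$-form along $\gamma_\balpha$. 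That boundary $1$-form is then matched with $\tfrac{h_1^\bbeta}{2}\theta^0_\bbeta$ using the defining local expansion $\hat x-\hat x_{0,\bbeta}=\zeta_\bbeta^2$, $\hat y=\hat y_{0,\bbeta}+h_1^\bbeta\zeta_\bbeta+\cdots$ and the characterization of $\theta^0_\bbeta$ as the unique meromorphic $1$-form with $\theta^0_\bbeta=-d\zeta_\bbeta/\zeta_\bbeta^2+(\text{holomorphic})$ and vanishing $A$-periods.

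\textbf{Main obstacle.} The first delicate point is verifying that the Lefschetz thimble $\Gamma_\balpha$ is genuinely cohomologous, in the relative-homology theory controlling the oscillatory integrals, to the product cycle $\tau_\balpha\times\{|Z|=\epsilon\}$, so that no contributions from other critical points of $W^{\bT'}$ are picked up; this is a Picard--Lefschetz type argument using that the coordinate change $U=HZ$ is a biholomorphism off $\{H=0\}$ and exhibits the critical locus of $W^{\bT'}$ as the locus where the $Z$-fiber is collapsed. The second obstacle is, for the $\Vbar_\bbeta$ identity, executing the double Stokes reduction and identifying the resulting boundary $1$-form on $\gamma_\balpha$ with $\tfrac{h_1^\bbeta}{2}\theta^0_\bbeta$ on the nose, including the correct normalization away from the critical point.
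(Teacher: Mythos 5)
Your two-step strategy (residue in $Z$, then Stokes from $d\hat x\wedge d\hat y$ down to $\hat y\,d\hat x$) is indeed the right skeleton, and the identity $e^{-\hat x/z}d\hat x\wedge d\hat y=-d\bigl(\hat y\,e^{-\hat x/z}\,d\hat x\bigr)$ you write is correct. However, there are two genuine gaps.

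First, the deformation to $\tau_\balpha\times\{|Z|=\epsilon\}$. This is not merely "delicate"; a product cycle with constant $|Z|=\epsilon$ cannot represent the correct relative homology class. Over $\gamma_\balpha\subset C_q$ one has $H=0$, so $W^{\bT'}=\hat x$ is \emph{independent of $Z$}, and the $S^1$-fiber sits inside a level set of $W^{\bT'}$; it does not escape to $\mathrm{Re}\,W^{\bT'}\to+\infty$. The Lefschetz thimble $\Gamma_\balpha$, which passes through $P_\balpha=(p_\balpha,Z_\balpha)$ with $Z_\balpha\neq 0$, has a different fiber structure over $C_q$: the correct picture is a vanishing-cycle-type fibration over a $2$-chain $\tau_\balpha$, with the circle degenerating as one approaches $\gamma_\balpha=\partial\tau_\balpha$. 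Relatedly, your boundary piece $\{\hat y=0\}$ in $\widetilde\tau_\balpha$ is an arbitrary device inserted to make Stokes close; it is not imposed by the geometry of $\Gamma_\balpha$, and as written $\widetilde\tau_\balpha$ picks up only one of the two branches of $\gamma_\balpha$ emanating from the ramification point, whereas $\gamma_\balpha=\{\hat x\in[\hat x_{0,\balpha},\infty)\}$ is the full two-sheeted Lefschetz thimble of $\hat x$ on $C_q$. The cleaner route is to \emph{construct} $\Gamma_\balpha$ outright as the total space of this degenerating circle bundle over a $2$-chain bounded by $\gamma_\balpha$; the proposition only asserts the existence of such $\Gamma_\balpha$, so nothing forces you to start from the thimble of $W^{\bT'}$ and deform.

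Second, and more seriously, your treatment of the second identity ignores the $z$-dependence of $\Vbar_\bbeta$. As the paper points out immediately after the proposition, $\Vbar_\bbeta$ is a \emph{quadratic polynomial in $z$} whose degree-$0$ term is $V_\bbeta$, and the $z$- and $z^2$-coefficients are uniquely determined (see the cited Section~6.1 of \cite{FLZ3}). You write "choose a lift of $\Vbar_\bbeta$ to a Laurent polynomial in $X,Y$ alone," but this sidesteps the crux: inserting the bare $V_\bbeta$ and integrating by parts twice in the $\hat x$-direction does \emph{not} produce $\tfrac{h_1^\bbeta}{2}\theta^0_\bbeta$ on the nose. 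Each integration by parts generates boundary terms involving derivatives of $V_\bbeta$ restricted to $C_q$ and the branch behavior near $p_\bbeta$, and these appear at different orders in $z$; the $z$ and $z^2$ corrections built into $\Vbar_\bbeta$ are precisely what cancels these so that only $z^2\int_{\gamma_\balpha}e^{-\hat x/z}\tfrac{h_1^\bbeta\theta^0_\bbeta}{2}$ survives, with $\theta^0_\bbeta$ emerging with the correct normalization and the correct global (vanishing $\cA$-period) characterization. Your proposal treats $\Vbar_\bbeta$ as input data to be dropped into the same Stokes pipeline as the first identity; the actual content of the proposition is that the pipeline itself \emph{determines} $\Vbar_\bbeta$, and verifying that the resulting boundary $1$-form is exactly $\tfrac{h_1^\bbeta}{2}\theta^0_\bbeta$ is the main computation, not a postscript.
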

\begin{remark}
  The modified canonical basis $\Vbar_{\balpha}$ is a quadratic polynomial in $z$ -- the degree $0$ term is $V_\balpha$. There is a unique way to construct $\Vbar_\balpha$ from $V_\alpha$. See \cite[Section 6.1]{FLZ3}
\end{remark}
\begin{remark}
It is expected that these oscillatory integrals \cite{Fa16,FLZ2-5}, under the open-closed mirror map $\btau=\btau(q)$, should be equal to
\[
\llangle \phi_\bbeta(\btau),\frac{\kappa(\cE_\balpha)}{z-\psi}\rrangle_{0,2},
\]
where $\kappa(\cE_\balpha)$ is certain characteristic classes invovling Gamma functions of a \emph{mirror} coherent sheaf $\cE_\balpha$ (mirror to $\gamma_\balpha$ or $\Gamma_\balpha$) on $\cX$ .
\end{remark}

There are related definition (following \cite{EO15})
\begin{align}
  \label{eqn:b-model-S}
 &\int_{\Gamma_{\bsi}} e^{-\frac{W^{\bT'}}{z}} \frac{\sqrt{-2}\Vbar_{\bsi'}}{h_1^{\bsi'}}\sim(-2\pi z)^\frac{3}{2}  \check R_{\bsi'}^{\ \bsi}(z) e^{-\frac{\hat x_{0,\bsi}}{z}}
 \\
 &f_{\bsi'}^{\spa \bsi}(u) \sim
 \frac{e^{\hat x_{\bsi,0}}}{2\sqrt{\pi u}}
 \int_{\gamma_{\bsi}} e^{-u\hat{x}}\theta^0_{\bsi'}.
 \nonumber
\end{align}

The dimensional reduction Proposition \ref{prop:dr} implies
\[
\check R_{\balpha}^{\  \bbeta}(z)=f_{\balpha}^{\ \bbeta}(\frac{1}{z}).
\]
The A-side QDE is \eqref{eqn:qde}, while the B-side equation comes from the following simple calculus
\[
-z \frac{\partial}{\partial \tau_i}\int e^{-\frac{W^{\bT'}}{z}} \omega=(\frac{\partial W^{\bT'}}{\partial \tau_i})\int e^{-\frac{W^{\bT'}}{z}}\omega.
\]
Some remarks on this simple fact:
\begin{itemize}
  \item The integral is over any flat half-dimensional cycle on which the integral is converging.
  \item $\omega$ needs to be \emph{flat}, i.e. it does not depend on the parameter $q$ (or $\btau$, differing with $q$ by a mirror map), or invariant under the Gauss-Manin connection. Notice $V_\bsi$ or $\Vbar_\bsi$ is not flat -- they are canonical basis and vary with the parameters $q$. So the integral in Equation \eqref{eqn:b-model-S} does not satisfy this equation \emph{per se}. A linear combination of $V_\bsi$ (with coefficients dependent on $q$ or $\btau$) which produces a flat form does satisfy this equation -- for example, using the canonical to flat change of basis matrices $\Psi_{\bsi'}^{\ \bsi}$ as in Section \ref{sec:A-S}.
  \item The identification of the genus $0$ mirror symmetry identifies $\frac{\partial W^{\bT'}}{\partial \tau_i}$ with $H_i$ -- so the above differential equation is the same as the A-side QDE \eqref{eqn:qde}.
\end{itemize}

The A-model $S$-matrix \eqref{eqn:a-model-S} and the B-model oscillatory integral \eqref{eqn:b-model-S} satisfy the QDE \eqref{eqn:qde}. By a theorem in \cite{Du93,Gi97}, we know $R$ and $\check R$ are uniquely determined up to constants. We can fix these constants at the large radius limit point $q=0$. The value of $\check R$ is explicitly computed in \cite{FLZ2} (here $\balpha=(\alpha,\gamma),\bbeta=(\beta,\delta)$)
\begin{align*}
  &\check R_{\balpha}^{\ \bbeta}(-z)|_{q=0}\\
  =& \frac{\delta_{\alpha,\beta}}{|G_\alpha|}\sum_{h\in G_\alpha}\chi_\gamma(h) \chi_\delta(h^{-1})
  \prod_{i=1}^3 \exp\Big( \sum_{m=1}^\infty \frac{(-1)^m}{m(m+1)}B_{m+1}(c^\alpha_i(h))
  (\frac{z}{\w_i(\alpha)})^m \Big)
\end{align*}
which is precisely $R_{\balpha}^{\ \bbeta}(z)|_{q=0}$ given in Equation (\ref{eqn:R-at-zero}). We have matched $R_{\balpha}^{\ \bbeta}(z)=\check R_\balpha^{\ \bbeta}(-z)$.

\subsection{Open leafs}

The matching of A and B-model $R$-matrices $R_{\balpha}^{\ \bbeta}(z)$ with $\check R_{\balpha}^{\ \bbeta}(-z)$ identifies all graph components other than open leafs. Open leaf requires more than $R$-matrices. Identifying $R$-matrices we rely on that the Frobenius structures on both sides are equal, while identifying open leafs we rely on genus $0$ \emph{open} mirror theorem, namely identifying the disk Gromov-Witten potential with $\Phi=\hat y d\hat x$ ($\Phi$ corresponds to disk invariants with \emph{no inseration}.)

Recall the A-model open leaf at vertex $\bsi$ with height $k$ ($k\geq 0$) is (Equation \eqref{eqn:O-leaf})
\begin{equation}
(\cL^\tX)^\bsi_k(l_j)=
[z^k]\left(\sum_{\bsi',\brho \in I_\bSi}(\txi^{\si'}(z,\tX) S(\hat \phi_\brho(\btau),\phi_{\bsi'}))_+R(-z)^{\ \bsi}_\brho \right).
\label{eqn:open-leaf}
\end{equation}
The open leaf weight, as a power series in $\tX$, determines each other for different height $k$ -- they are related by (c.f. Equation \eqref{eqn:txi} for the definition of $\txi$)
\[
(\cL^\tX)^\bsi_{k+1}=(\frac{1}{\sv}\tX\frac{d}{d\tX})(\cL^\tX)^\bsi_{k}.
\]
The localization computation says
\[
[z^0]\sum_{\bsi'\in I_\bSi} \txi^{\bsi'}(z,\tX)S(1,\phi_{\bsi'})=(\tX\frac{d}{d\tX})^2 F_{0,1}^{\cX,(\cL,f)}(\btau,\tX),
\]
while the open mirror theorem of \cite{FLT} further relates $F_{0,1}^{\cX,(\cL,f)}(\btau, \tX)$ to
\[
|G_0|F_{0,1}^{\cX,(\cL,f)}(\btau,\tX)=\check F_{0,1}(q;\hat X).
\]
So immediately one obtains
\[
[z^0]\sum_{\bsi'\in I_\bSi} \txi^{\bsi'}(z,\tX)S(1,\phi_{\bsi'})=-\frac{1}{|G_0|}\sum_{\ell\in \bmu_\fm^*}(\rho^\ell_q)^* (\frac{d\hat y}{d\hat x})\psi_\ell.
\]
The part in $()_+$ of the open leaf \eqref{eqn:open-leaf} involves the insertion of $\hat\phi_\rho(\btau)$, while the genus zero mirror theorem only deals with the insertion of $1$ in the $S$-function. However, since $S(1,\phi_{\bsi'})$ is a solution to QDE (Equation \eqref{eqn:qde}), taking derivatives with respect to $\btau$ we obtain the following
\[
-z\frac{\partial}{\partial \tau_i}=S(H^{\bT}_i,\phi_{\bsi'}).
\]

Since $\{H^\bT_i\}_{i=1}^\fp$ multiplicatively generate the cohomology, we choose $a_i,b_i$ ($i=1,\dots,\fg$) such that $H_{a_i}\star H_{b_i}$ form a basis of $H^4(\cX)$. This choice is not unique. In principle one can express in terms of
\[
\hat \phi_{\bsi}(\btau)=\sum_{i=1}^\fg \hat A_\bsi^i(\btau) H^\bT_{a_i}\star H_{b_i}^\bT+\sum_{a=1}^\fp \hat B^a_\bsi(\btau) H_a^\bT+\hat C_\bsi(\btau) \one.
\]
The coefficients are complicated. An very important observation is that we have the \emph{same} expression of $\theta_\bsi$ with \emph{same} coefficients
\[
\frac{\theta_\bsi}{\sqrt{-2}}=\sum_{i=1}^\fg \hat A^i_\bsi(\btau(q)) \frac{\partial^2 \Phi}{\partial \tau_{a_i}\partial \tau_{b_i}}+\sum_{a=1}^\fp \hat B^a_{\bsi}(\btau(q)) d(\frac{\frac{\partial \Phi}{\partial \tau_a}}{d\hat x })+\hat C_\bsi(\btau(q))d(\frac{d\hat y}{d\hat x}).
\]
This allows us to compute the $()_+$-part in \eqref{eqn:open-leaf}. We end up with
\[
|G_0|\left(\sum_{\si'\in I_\bSi}\txi^{\bsi'}(z,\tX)S(\hat\phi_\bsi(\btau),\phi_{\bsi'})\right)_+=\sum_{\ell\in \bmu_\fm^*}\int_{0}^{\hat X} \frac{(\rho^\ell_q)^*\hat \theta_\bsi(z)}{\sqrt{-2}}.
\]
We see that $\theta_\bsi^0$ plays the role of disk invariants where one has \emph{a closed insertion $\hat \phi_\bsi(\btau)$}, after some constant factor. Therefore, comparing the leaf terms of A-model and B-model graph sums
\[
|G_0|(\cL_O^\tX)^\bsi_k(l_j)=-(\check \cL_O^{\hX} )^\bsi_k(l_j).
\]

Thus all graph components are matched -- the factor $|G_0|$ here results in the factor $|G_0|^n$ in the conjecture, and the sign contributes to $(-1)^n$.

\begin{figure}[h]
\begin{center}
\psfrag{p}{\tiny \hspace{-0.2cm}$\hat \phi_\bsi(\btau)$}
\includegraphics[scale=0.6]{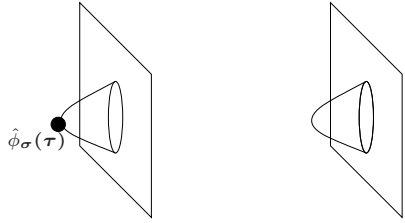}
\caption{The disk invariants with $\hat \phi_\bsi(\btau)$ inserted, versus with no insertion. The former corresponds to $\theta_\bsi$ while the later corresponds to $\Phi$ on the B-model.}
\label{fig:disk}
\end{center}
\end{figure}

We summarize the comparison of graph components in the following table.
\begin{center}
  \footnotesize
  \begin{tabular}{ |c | c | c |}
    \hline
    A-model  GW  & B-model spectral curve & Remark  \\ \hline
    \# of toric fixed pts       & \# of ramification points  &  dimension of Frobenius algebra \\
    \hline
    $\sqrt{\frac{1}{\Delta^\balpha(\btau)}}$ & $\frac{h^\balpha_1}{\sqrt{-2}}$ & length of canonical basis, \\ & &matching vertices\\
     \hline
    R-matrix $R_\balpha^{\ \bbeta}(z)$   & $\check R_{\balpha}^{\ \bbeta}(-z)$ & matching edges,\\ & &  dilaton leafs \\ \hline
    canonical coordinate $u^\balpha$& critical value $\hat x_{0,\balpha}$ & \\ \hline
   meromorphic form $\frac{\theta_\balpha^0}{\sqrt{-2}}$   & disk potential inserted by  $\hat\phi^\balpha(\tau)$& matching open leafs,\\&&see Figure \ref{fig:disk}\\
    \hline
    $\hat y d \hat x$ & disk  potential (no insertion)  & see Figure \ref{fig:disk} \\
   \hline
    \end{tabular}
\end{center}


\begin{thebibliography}{AA}
\bibitem{ABK} M. Aganagic, V. Bouchard, A. Klemm,
``Topological strings and (almost) modular forms,''
Comm. Math. Phys. {\bf 277} (2008), no. 3, 771--819.

\bibitem{AF} D. Abramovich, B. Fantechi,
``Orbifold techniques in degeneration formulas,"
Ann. Sc. Norm. Super. Pisa Cl. Sci. (5) \textbf{16} (2016), no. 2, 519--579.

\bibitem{AGV02} D. Abramovich, T. Graber, A. Vistoli,
``Algebraic orbifold quantum products, Orbifolds in mathematics and physics (Madison, WI, 2001), 1--24,
Contemp. Math., {\bf 310}, Amer. Math. Soc., Providence, RI, 2002.

\bibitem{AGV08} D. Abramovich, T. Graber, A. Vistoli,
``Gromov-Witten theory of Deligne-Mumford stacks,"
Amer. J. Math. {\bf 130} (2008), no. 5, 1337--1398.

\bibitem{AKMV} M. Aganagic, A. Klemm, M. Mari\~{n}o, C. Vafa,
``The topological vertex," Comm. Math. Phys. 254 (2005), no. 2, 425--478.

\bibitem{AKV} M. Aganagic, A. Klemm, C. Vafa, ``Disk instantons, mirror symmetry and the duality web,'' Z. Naturforsch. {\bf A 57} (2002), 1--28.

\bibitem{AV} M. Aganagic, C. Vafa, ``Mirror symmetry, D-branes and counting holomorphic discs,'' {\tt arXiv:hep-th/0012041}.

\bibitem{BC94} V. V. Batyrev, D. A. Cox,
``On the Hodge structure of projective hypersurfaces in toric varieties,"
Duke Math. J. {\bf 75} (1994), no. 2, 293--338.

\bibitem{BCOV} M. Bershadsky, S. Cecotti, H. Ooguri, C. Vafa,
``Holomorphic anomalies in topological field theories,''
Nuclear Phys. {\bf B 405} (1993), no. 2-3, 279--304.

\bibitem{BR} M. Bershadsky, A. Radul,  ``Conformal field theories with additional $\bZ_n$ symmetry," Int. J. Mod. Phys. A {\bf 02}, (1987), 165.

\bibitem{BCS05} L. Borisov, L. Chen, and G.~Smith,
{\em The orbifold Chow ring of toric Deligne-Mumford stacks},
J. Amer. Math. Soc. {\bf 18} (2005), no.~1, 193--215.

\bibitem{BCMS} V. Bouchard, A. Catuneanu, O. Marchal, P. Su{\l}kowski,
``The remodeling conjecture and the Faber-Pandharipande formula,"
Lett. Math. Phys. {\bf 103} (2013), 59--77.

\bibitem{BKMP09}  V. Bouchard, A. Klemm, M. Mari\~{n}o, S. Pasquetti,
``Remodeling the B-model,''
Comm. Math. Phys. {\bf 287} (2009), no. 1, 117--178.

\bibitem{BKMP10} V. Bouchard, A. Klemm, M. Mari\~{n}o, S. Pasquetti,
``Topological open strings on orbifolds,''
Comm. Math. Phys. {\bf 296} (2010), no. 3, 589--623.

\bibitem{BSLM13} V. Bouchard, D. Hern\'{a}ndez Serrano, X. Liu, M. Mulase,
``Mirror symmetry for orbifold Hurwitz numbers," J. Differential Geom.
{\bf 98} (2014), no. 3, 375--423.

\bibitem{BS12} V. Bouchard, P. Su{\l}kowski,
``Topological recursion and mirror curves,'' Adv. Theor. Math. Phys. {\bf 16} (2012), no. 5, 1443--1483.

\bibitem{Brn} A. Brini, ``Open topological strings and integrable hierarchies: Remodeling the A-model," Comm. Math. Phys. {\bf 312} (2012), no. 3, 735--780.

\bibitem{BCY} J. Bryan, C. Cadman, B. Young,
``The orbifold topological vertex," Adv. Math. {\bf 229} (2012), no. 1,
531--595.

\bibitem{CdGP} P. Candelas, X.C. de la Ossa, P.S. Green, L. Parkes,
``A pair of Calabi-Yau manifolds as an exactly soluble superconformal theory,''
Nuclear Phys. {\bf B 359} (1991), no. 1, 21--74.

\bibitem{CV} S. Cecotti, C. Vafa, ``Massive orbifolds'', Mod. Phys. Lett. A {\bf 07} (1992), 1715.

\bibitem{CLSZ} B. Chen, A.-M. Li, S. Sun, G. Zhao,
``Relative orbifold Gromov-Witten theory and degeneration formula,"
{\tt arXiv:1110.6803}.

\bibitem{Ch09} L. Chen,
``Bouchard-Klemm-Marino-Pasquetti Conjecture for $\mathbb{C}^3$,''
{\tt arXiv:0910.3739}.

\bibitem{CR02} W. Chen, Y. Ruan,
``Orbifold Gromov-Witten theory," {\em Orbifolds in mathematics and physics},
25--85, Contemp. Math., {\bf 310}, Amer. Math. Soc., Providence, RI, 2002.

\bibitem{CR04} W. Chen, Y. Ruan,
``A new cohomology theory of orbifold,"
Comm. Math. Phys. {\bf 248} (2004), no. 1, 131.

\bibitem{CKYZ}  T.-M. Chiang, A. Klemm, S.-T. Yau, E. Zaslow, ``Local Mirror Symmetry: Calculations and Interpretations'', Adv. Theor. Math. Phys. {\bf 3} (1999) 495--565.


\bibitem{C03} T. Coates, ``Riemann-Roch theorems in theorems in Gromov-Witten theory'', PhD thesis, UC Berkeley, spring 2003, available at the author’s website.

\bibitem{CCIT} T. Coates, A. Corti, H. Iritani, H.-H. Tseng,
``A mirror theorem for toric stacks. Compos. Math. {\bf 151} (2015), no. 10, 1878--1912.


\bibitem{CK} D. Cox, S, Katz {\em Mirror symmetry and algebraic geometry},
Math. Surveys and monographs {\bf 68}, American Math. Soc., Providence, RI,1999.


\bibitem{DF} D.E. Diaconescu, B. Florea,
``Localization and Gluing of Topological Amplitudes,''
Comm. Math. Phys. {\bf 257} (2005) 119--149.

\bibitem{DFMS}  L. Dixon, D. Friedan, E. Martinec, S. Shenker, ``The conformal field theory of orbifolds'',  Nucl. Phys. B {\bf 282} (1987) 13--73.

\bibitem{DHVW1} L. Dixon, J.A. Harvey, C. Vafa, E. Witten,
``Strings on orbifolds," Nuclear Phys. B {\bf 261} (1985), no. 4, 678--686.

\bibitem{DHVW2} L. Dixon, J.A. Harvey, C. Vafa, E. Witten,
``Strings on orbifolds II," Nuclear Phys. B {\bf 274} (1986), no. 2, 285--314.

\bibitem{Du93} B. Dubrovin, ``Geometry of 2D topological field theories'' in Integrable Systems and Quantum Groups (Montecatini Terme, Italy, 1993), Lecture Notes in Math. {\bf 1620}, Springer, Berlin, 1996, 120--348.

\bibitem{DOSS} P. Dunin-Barkowski, N. Orantin, S. Shadrin, L. Spitz,
``Identification of the Givental formula with the special curves topological
recursion procedure,'' Comm. Math. Phys. {\bf 328} (2014), no. 2, 669--700.

\bibitem{EO07} B. Eynard, N. Orantin,
``Invariants of algebraic curves and topological expansion,''
Commun. Number Theory Phys. {\bf 1} (2007), no. 2, 347--452.

 \bibitem{E11} B. Eynard,
``Intersection number of spectral curves,'' arXiv:1104.0176.

\bibitem{E14} B. Eynard,
``Invariants of spectral curves and intersection theory of moduli spaces of complex curves,''
Commun. Number Theory Phys. {\bf 8} (2014), no. 3, 541--588.

\bibitem{EO15} E. Eynard, N. Orantin,
``Computation of open Gromov-Witten invariants for toric Calabi-Yau 3-folds by
topological recursion, a proof of the BKMP conjecture,''
Comm. Math. Phys. {\bf 337} (2015), no. 2, 483--567.

\bibitem{Fa16} B. Fang, ``Central charges of T-dual branes for toric varieties'',
{\tt arXiv:1611.05153}.

\bibitem{FL} B. Fang, C.-C. M. Liu, ``Open Gromov-Witten invariants of toric Calabi-Yau 3-folds,"
Comm. Math. Phys. {\bf 323} (2013), no. 1, 285--328.

\bibitem{FLT} B. Fang, C.-C. M. Liu, H.-H. Tseng,
``Open-closed Gromov-Witten invariants of 3-dimensional Calabi-Yau smooth toric DM stacks,''
{\tt arXiv:1212.6073}.

\bibitem{FLZ1} B. Fang, C.-C. M. Liu, Z. Zong,
``Equivariant Gromov-Witten theory of affine Smooth toric Deligne-Mumford stacks,"
Int. Math. Res. Not. {\bf 2016} no. 7, 2127--2144.

\bibitem{FLZ2} B. Fang, C.-C.M. Liu, Z. Zong,
``All Genus Open-Closed Mirror Symmetry for Affine Toric Calabi-Yau 3-Orbifolds,''
{\tt arXiv:1310.4818}.

\bibitem{FLZ2-5} B. Fang, C.-C.M. Liu, Z. Zong,
``The Eynard-Orantin recursion and equivariant mirror symmetry for the projective line,''  Geom. Topol. \textbf{21} (2017), no. 4, 2049--2092.

\bibitem{FLZ3} B. Fang, C.-C. M. Liu, Z. Zong,
``On the remodeling conjecture for toric Calabi-Yau 3-orbifolds,'' arXiv:1604.07123.

\bibitem{Fay}  J. D. Fay, {\em Theta functions on Riemann surfaces,}
Lecture Notes in Mathematics, Vol. 352. Springer-Verlag, Berlin-New York, 1973.

\bibitem{Gi96} A.B. Givental,
``Equivariant Gromov-Witten invariants,"
Internat. Math. Res. Notices {\bf 1996}, no. 13, 613--663.

\bibitem{Gi98} A.B. Givental,
``A mirror theorem for toric complete intersections,"
{\it Topological field theory, primitive forms and related topics (Kyoto, 1996)}, 141--175,
Progr. Math., {\bf 160}, Birkh\"{a}user Boston, Boston, MA, 1998.

\bibitem{Gi97} A.B. Givental, ``Elliptic Gromov-Witten invariants and the generalized mirror conjecture,'' {\it Integrable systems and algebraic geometry (Kobe/Kyoto, 1997)}, 107--155, World Sci. Publ., River Edge, NJ, 1998.

\bibitem{Gi01} A. B. Givental, ``Semisimple Frobenius structures at higher genus,''
       Internat. Math. Res. Notices {\bf 2001}, no. 23, 1265--1286.

\bibitem{Gi01'} A. B. Givental, ``Gromov-Witten invariants and quantization of quadratic hamiltonians,''
Mosc. Math. J. {\bf 1} (2001), no. 4, 551--568.

\bibitem{GrPa} T. Graber, R. Pandharipande,
``Localization of virtual classes,''
Invent. Math. {\bf 135} (1999), no. 2, 487--518.

\bibitem{GZ} T. Graber, E. Zaslow, ``Open-string Gromov-Witten invariants: calculations and a mirror `theorem'," Orbifolds in mathematics and physics (Madison, WI, 2001), 107--121, Contemp. Math., \textbf{310}, Amer. Math. Soc., Providence, RI, 2002.

\bibitem{GKR} M. Gross, L. Katzarkov, H. Ruddat, ``Towards Mirror Symmetry for Varieties of General Type,''
Advances in Mathematics, \textbf{308} (2017), 208--275.

\bibitem{HL82} R. Harvey, H. B. Lawson, Jr.,
``Calibrated geometries," Acta Math. {\bf 148} (1982), 47--157.

\bibitem{HV00} K. Hori, C. Vafa,
``Mirror symmetry," {\tt arXiv:hep-th/0002222}.

\bibitem{HIV00}  K. Hori, A. Iqbal, C. Vafa,
``D-branes and mirror symmetry,'' {\tt arXiv:hep-th/0005247}.

\bibitem{HV} S. Hamidil, C. Vafa, ``Interactions on orbifolds,'' Nuclear Physi. B. {\bf 279} (1987), 465--513.

\bibitem{HKQ} M.-X. Huang, A. Klemm, S. Quackenbush,
``Topological String Theory on Compact Calabi-Yau: Modularity and Boundary Conditions,''
Homological mirror symmetry, 45--102, Lecture Notes in Phys., 757, Springer, Berlin, 2009.

\bibitem{Ir09} H. Iritani, ``An integral structure in quantum cohomology and mirror symmetry for toric orbifolds,'' Adv. Math. {\bf 222} (2009), no. 3, 1016--1079.

\bibitem{JK} T. J. Jarvis, T. Kimura,
``Orbifold quantum cohomology of the classifying space of a finite group,''
Orbifolds in mathematics and physics (Madison, WI, 2001), 123--134,
Contemp. Math., {\bf 310}, Amer. Math. Soc., Providence, RI, 2002.

\bibitem{KP11} J. Kaczorowski, A. Perelli,
``A uniform version of Stirling's formula,''
Funct. Approx. Comment. Math. {\bf 45} (2011), part 1, 89--96.

\bibitem{KL} S. Katz, C.-C. M. Liu,
``Enumerative geometry of stable maps with Lagrangian boundary conditions and
multiple covers of the disc,''  Adv. Theor. Math. Phys. {\bf 5} (2001), no. 1, 1--49.

\bibitem{KZ}  A. Klemm, E. Zaslow, ``Local mirror symmetry at higher genus," Winter School on Mirror Symmetry, Vector Bundles and Lagrangian Submanifolds (Cambridge, MA, 1999), 183--207, AMS/IP Stud. Adv. Math., 23, Amer. Math. Soc., Providence, RI, 2001.

\bibitem{Kn} V. G. Knizhnik, ``Analytic fields on Riemann surfaces. II ,'' Comm. Math. Phys. {\bf 112}, no. 4 (1987), 567--590.

\bibitem{KO} I. Kostov, N. Orantin,
``CFT and topological recursion,"
J. High Energy Phys. {\bf 2010}, no. 11, 056, 30 pp.

\bibitem{LP} Y.-P. Lee, R. Pandharipande, {\em Frobenius manifolds, Gromov-Witten theory, and Virasoro constraints},  {\tt http://www.math.utah.edu/$\sim$yplee/research/}

\bibitem{LM} W. Lerche, P. Mayr, ``On N = 1 mirror symmetry for open type II strings,'' {\tt arXiv:hep-th/0111113v2}.


\bibitem{LiRu} A. Li, Y. Ruan,
``Symplectic surgery and Gromov-Witten invariants of Calabi-Yau 3-folds,''
Invent. Math. {\bf 145} (2001), no. 1, 151--218.

\bibitem{Li01} J. Li,
``Stable Morphisms to singular schemes and relative stable morphisms,''
J. Diff. Geom. {\bf 57} (2001), 509--578.

\bibitem{Li02} J. Li,
``A degeneration formula of Gromov-Witten invariants,''
J. Diff. Geom. {\bf 60} (2002), 199--293.

\bibitem{LLZ03} C.-C.M. Liu, K. Liu, J. Zhou,
``A proof of a conjecture of Mari\~{n}o-Vafa on Hodge Integrals,''
J. Differential Geom.  {\bf 65} (2003),  no. 2, 289--340.

\bibitem{LLZ07} C.-C.M. Liu, K. Liu, J. Zhou,
``A formula of two-partition Hodge integrals,''
J. Amer. Math. Soc. {\bf 20} (2007), no. 1, 149--184.

\bibitem{LLLZ}   J. Li, C.-C. M. Liu, K. Liu, J. Zhou,
``A mathematical theory of the topological vertex,"
Geom. Topol. {\bf 13} (2009), no. 1, 527--621.

\bibitem{LLY1} B.H.~Lian, K.~Liu, S.-T.~Yau,
``Mirror Principle I,''
Asian J. Math. {\bf 1} (1997), no. 4,  729--763.

\bibitem{LLY2} B.H.~Lian, K.~Liu, S.-T.~Yau,
``Mirror Principle II,''
Asian J. Math. {\bf 3} (1999), no.1,  109--146.

\bibitem{LLY3} B.H.~Lian, K.~Liu, S.-T.~Yau,
``Mirror Principle III,''
Asian J. Math. {\bf 3} (1999), no.4, 771--800.

\bibitem{Liu02} C.-C. M. Liu, ``Moduli of $J$-holomorphic curves with Lagrangian boundary conditions and open Gromov-Witten invariants for an $S^1$-equivariant pair,'' {\tt arXiv:math/0211388.}

\bibitem{Liu13} C.-C. M. Liu, ``Localization in Gromov-Witten theory and orbifold
Gromov-Witten theory,'' Handbook of Moduli, Volume II, 353--425, Adv. Lect. Math., (ALM) {\bf 25},
International Press and Higher Education Press, 2013.

\bibitem{LX09} K. Liu, H. Xu, ``Recursion formulae of higher Weil-Petersson volumes'', Int. Math. Res. Notices (2009) 2009 ({\bf 5}): 835--859.

\bibitem{Ma} M. Mari\~{n}o, ``Open string amplitudes and large order behavior in topological string theory,'' J. High Energy
Phys. 2008 , no. 3, 060, 34 pp.

\bibitem{MNOP1} D. Maulik, N. Nekrasov, A. Okounkov, R. Pandharipande,
``Gromov-Witten theory and Donaldson-Thomas theory I,''
Compos. Math. {\bf 142} (2006),  no. 5, 1263--1285.

\bibitem{MOOP} D. Maulik, A. Oblomkov, A. Okounkov, R. Pandharipande,
``Gromov-Witten/Donaldson-Thomas correspondence for toric 3-folds,"
Invent. Math. {\bf 186} (2011), no. 2, 435--479.

\bibitem{MaPa} D. Maulik, R. Pandharipande,
``A topological view of Gromov-Witten theory,''
Topology {\bf 45} (2006), no. 5, 887--918.

\bibitem{OP04} A. Okounkov, R. Pandharipande,
``Hodge integrals and invariants of the unknot,"
Geom. Topol. {\bf 8} (2004), 675--699.

\bibitem{Roan} S.-S. Roan, ``On Calabi-Yau orbifolds in weighted projective spaces'',
Internat. J. Math. {\bf 1} (1990), no. 2, 211--232.

\bibitem{Ro14} D. Ross, ``Localization and gluing of orbifold amplitudes:
the Gromov-Witten orbifold vertex", Trans. Amer. Math. Soc. {\bf 366} (2014), no. 3, 1587--1620.

\bibitem{RZ13} D. Ross, Z. Zong,
``The Gerby Gopakumar-Mari\~{n}o-Vafa Formula,''
Geom. Topol. {\bf 17} (2013), no. 5, 2935--2976.

\bibitem{RZ14} D. Ross, Z. Zong,
``Cyclic Hodge Integrals and Loop Schur Functions,''
Adv. Math. {\bf 285} (2015), 1448--1486.

\bibitem{R14} D. Ross,
``On GW/DT and Ruan's Conjecture in All Genus for Calabi-Yau 3-Orbifolds,''
Comm. Math. Phys. {\bf 340} (2015), no. 2, 851--864.

\bibitem{Ts10} H.-H. Tseng,
``Orbifold quantum Riemann-Roch, Lefschetz and Serre,''
Geom. Topol. {\bf 14} (2010), no. 1, 1--81.

\bibitem{W07} J. Walcher, ``Opening mirror symmetry on the quintic,''
Comm. Math. Phys. {\bf 276} (2007), no. 3, 671--689.

\bibitem{W09} J. Walcher, ``Extended holomorphic anomaly and loop amplitudes in open topological string,''
Nuclear Phys. {\bf B 817} (2009), no. 3, 167--207.

\bibitem{Wa09} J. Walcher, ``Evidence for tadpole cancellation in the topological string,''
Commun. Number Theory Phys. {\bf 3} (2009), no. 1, 111--172.

\bibitem{YY04} S. Yamaguchi, S.-T. Yau,
``Topological string partition functions as polynomials,''
J. High Energy Phys. 2004, no.7, 047, 22pp.

\bibitem{Zas93} E. Zaslow,
``Topological orbifold models and quantum cohomology rings,"
Comm. Math. Phys. {\bf 156} (1993), no.2, 301--331.

\bibitem{Zh09} J. Zhou,
``Local Mirror Symmetry for One-Legged Topological Vertex,'' {\tt  arXiv:0910.4320};
``Local Mirror Symmetry for the Topological Vertex,'' {\tt  arXiv:0911.2343}.

\bibitem{Zh10} J. Zhou, ``Open string invariants and mirror curve of the resolved conifold,"
{\tt arXiv:1001.0447.}

\bibitem{Zhu} S. Zhu, ``On a proof of the Bouchard-Su{\l}kowski conjecture,''
Math. Res. Lett. {\bf 22} (2015), no. 2, 633--643.

\bibitem{Zi09} A. Zinger,
``The reduced genus 1 Gromov-Witten invariants of Calabi-Yau hypersurfaces,"
J. Amer. Math. Soc. {\bf 22} (2009), no. 3, 691--737.

\bibitem{Zo15} Z. Zong,
``Generalized Mari\~{n}o-Vafa Formula and Local Gromov-Witten Theory of Orbi-curves,''
J. Differential Geom. {\bf 100} (2015), no. 1, 161--190.

\bibitem{Zo} Z. Zong,
``Equivariant Gromov-Witten Theory of GKM Orbifolds", {\tt arXiv:1604.07270}.

\end{thebibliography}
\end{document}